\newtheorem{theorem}{Theorem}
\newtheorem{remark}[theorem]{Remark}
\newtheorem{lemma}[theorem]{Lemma}
\newtheorem{proposition}[theorem]{Proposition}
\newtheorem{corollary}[theorem]{Corollary}
\newtheorem{definition}[theorem]{Definition}
\newtheorem{example}[theorem]{Example}
\DeclareMathOperator*{\im}{im}              %
\DeclareMathOperator*{\rank}{rank}              %
\DeclareMathOperator*{\dom}{dom}              %
\DeclareMathOperator*{\divergenz}{div}              %
\DeclareMathOperator*{\ints}{int}         %
\DeclareMathOperator*{\essinf}{ess  inf}         %
\DeclareMathOperator*{\id}{id}         %
\def\Z{\mathbb{Z}}
\def\R{\mathbb{R}}
\def\C{\mathbb{C}}
\def\RN{\mathbb{R}^N}
\def\W1p{W^{1,p}(\Omega)}
\def\W1p0{W^{1,p}_0(\Omega)}
\def\Wx1p{W^{1,p(\cd)}(\Omega)}
\newcommand{\Lp}[1]{L^{#1}(\Omega)}
\def\lan{\langle}
\def\ran{\rangle}
\def\eps{\varepsilon}
\def\ph{\varphi}
\def\Om{\Omega}
\def\into{\int_{\Omega}}
\def\weak{\rightharpoonup}
\def\Linf{L^{\infty}(\Omega)}
\def\C1{\ints (C^1(\overline{\Om})_+)}
\def\close{\overline{\Omega}}
\def\interior{\ints \left(C^1_0(\overline{\Omega})_+\right)}
\numberwithin{theorem}{section}
\numberwithin{equation}{section}
\def\cprime{$'$}
\renewcommand{\l}{\left}
\renewcommand{\r}{\right}
\title[Nonlinear nonhomogeneous Dirichlet equations]{Nonlinear nonhomogeneous Dirichlet equations with a superlinear reaction}
\author[N. S. Papageorgiou]{Nikolaos S. Papageorgiou}
\address[N. S. Papageorgiou]{National Technical University, Department of Mathematics, Zografou Campus, Athens 15780, Greece}
\email{npapg@math.ntua.gr}
\author[P. Winkert]{Patrick Winkert}
\address[P. Winkert]{Technische Universit\"{a}t Berlin, Institut f\"{u}r Mathematik, Stra\ss e des 17. Juni 136, 10623 Berlin, Germany}
\email{winkert@math.tu-berlin.de}
\subjclass[2010]{35J20, 35J60, 35J92, 58E05}
\keywords{Superlinear reaction, Ambrosetti-Rabinowitz condition, nonlinear regularity, nodal solutions, tangency principle, critical groups, nonlinear eigenvalue problem}
\begin{document}

\begin{abstract}
    We consider a nonlinear elliptic Dirichlet equation driven by a nonlinear nonhomogeneous differential operator involving a Carath\'{e}odory reaction which is $(p-1)$-superlinear but does not satisfy the Ambrosetti-Rabinowitz condition. First we prove a three-solutions-theorem extending an earlier classical result of Wang (Ann. Inst. H. Poincar\'e Anal. Non Lin\'eaire 8 (1991), no. 1, 43--57). Subsequently, by imposing additional conditions on the reaction $f(x,\cdot)$, we produce two more nontrivial constant sign solutions and a nodal solution for a total of five nontrivial solutions. In the special case of $(p,2)$-equations we prove the existence of a second nodal solution for a total of six nontrivial solutions given with complete sign information. Finally, we study a nonlinear eigenvalue problem and we show that the problem has at least two nontrivial positive solutions for all parameters $\lambda>0$ sufficiently small where one solution vanishes in the Sobolev norm as $\lambda \to 0^+$ and the other one blows up (again in the Sobolev norm) as $\lambda \to 0^+$.
\end{abstract}

\maketitle

\section{Introduction}
Let $\Omega \subseteq \R^N$ be a bounded domain with a $C^2$-boundary $\partial \Omega$ and let $1<p<\infty$. In this paper, we study the following nonlinear nonhomogeneous Dirichlet problem
\begin{equation}\label{problem}
    \begin{aligned}
      -\divergenz a(\nabla u(x)) & = f(x,u(x)) \quad && \text{in } \Omega,\\
       u & = 0  &&\text{on } \partial \Omega,
    \end{aligned}
\end{equation}
where $a: \R^N \to \R^N$ is a continuous, strictly monotone map which is $C^1$ on $\R^N \setminus \{0\}$. The precise conditions on $a(\cdot)$ are given in hypotheses H(a) below. These conditions are general enough to incorporate some differential operators of interest in our framework like the $p$-Laplacian ($1<p<\infty$), the $(p,q)$-Laplacian ($1<q<p<\infty$) and the generalized $p$-mean curvature differential operator ($1<p<\infty$). The reaction $f: \Omega \times \R \to \R$ is assumed to be a Carath\'{e}odory function (i.e., $x \mapsto f(x,s)$ is measurable for all $s\in \R$ and $s \mapsto f(x,s)$ is continuous for a.a. $x \in \Omega$) which exhibits $(p-1)$-superlinear growth near $\pm \infty$ but without satisfying the usual in such cases Ambrosetti-Rabinowitz condition. Our goal is to prove multiplicity theorems for such problems. For equations driven by the $p$-Laplacian, such multiplicity results were proved by Bartsch-Liu \cite{2004-Bartsch-Liu}, Bartsch-Liu-Weth \cite{2005-Bartsch-Liu-Weth}, Liu \cite{2010-Liu}, 
Papageorgiou-Rocha-Staicu \cite{2008-Papageorgiou-Rocha-Staicu} and Sun \cite{2010-Sun}.

Recall that, if $f: \Omega \times \R \to \R$ is a Carath\'{e}odory function and $F(x,s)=\int^s_0 f(x,t)dt$, we say that $f(x,\cdot)$ satisfies the Ambrosetti-Rabinowitz condition if there exist $\mu>p$ and $M>0$ such that
\begin{align}
    & 0<\mu F(x,s) \leq f(x,s)s \quad \text{for a.a. }x \in \Omega \text{ and for all } |s| \geq M, \label{AR1}\\
    & 0 < \essinf_{\Omega} F(\cdot, \pm M), \label{AR2}
\end{align}
(see Ambrosetti-Rabinowitz \cite{1973-Ambrosetti-Rabinowitz}). Integrating \eqref{AR1} and using \eqref{AR2}, we obtain the following growth conditions for the primitive $F(x,\cdot)$
\begin{align}\label{AR3}
    \tilde{\eta} |s|^\mu \leq F(x,s) \quad \text{for a.a. }x \in \Omega \text{, for all } |s| \geq M \text{, and some }\tilde{\eta}>0.
\end{align}
Thanks to \eqref{AR3} we have the much weaker condition
\begin{align}\label{AR4}
    \lim_{s \to \pm \infty} \frac{F(x,s)}{|s|^\mu}=+\infty \quad \text{uniformly for a.a. } x \in \Omega.
\end{align}
This means that the primitive $F(x,\cdot)$ is $(p-1)$-superlinear for a.a. $x\in \Omega$. In this paper we employ \eqref{AR4} combined with another asymptotic condition (see H(f)$_1$(iii)), which together are weaker than the Ambrosetti-Rabinowitz condition (see \eqref{AR1}, \eqref{AR2}) and fit in our analysis superlinear reactions with slower growth near $\pm \infty$.

The Ambrosetti-Rabinowitz condition, although very convenient in checking the Palais-Smale condition for the energy functional, is rather restrictive as revealed in the discussion above. So there have been efforts to relax it. For an overview of the relevant literature we refer to the recent works of Liu \cite{2010-Liu}, Li-Yang \cite{2010-Li-Yang}, and Miyagaki-Souto \cite{2008-Miyagaki-Souto}.

Our tools come from critical point theory and from Morse theory (critical groups) and involve also truncation and comparison techniques. In the next section, for the reader's convenience, we review the main definitions and facts which will employ in this work. We also introduce the hypotheses on the map $a(\cdot)$ and establish some useful consequences of these conditions.

\section{Preliminaries and hypotheses}\label{section_hypotheses}

Let $X$ be a Banach space and $X^*$ its topological dual while $\lan \cdot,\cdot\ran$ denotes the duality brackets to the pair $(X^*,X)$. We have the following definition.
\begin{definition}
  The functional $\ph \in C^1(X)$ fulfills the Cerami condition (the $C$-condition for short) if the following holds:
  every sequence $(u_n)_{n \geq 1} \subseteq X$ such that
      $(\ph(u_n))_{n \geq 1}$ is bounded in $\R$ and $(1+\|u_n\|_X)\ph'(u_n) \to 0$ in $X^*$ as $n \to \infty$, admits a strongly convergent subsequence.
\end{definition}

This compactness type condition on $\ph$ is more general than the well-known Palais-Smale condition which we encounter more often in the literature. Nevertheless, the $C$-condition suffices to have a deformation theorem from which one derives the minimax theory of certain critical values of $\ph$. One result of this theory is the so-called mountain pass theorem.

\begin{theorem}\label{theorem_mountain_pass}
    Let $\varphi\in C^1(X)$ be a functional satisfying the $C$-condition and let $u_1,u_2 \in X, \|u_2-u_1\|> \rho>0$,
    \begin{align*}
	\max \{\varphi(u_1),\varphi(u_2)\}<\inf \{\varphi(u): \|u-u_1\|_X=\rho\}=:\eta_\rho
    \end{align*}
    and $c=\inf_{\gamma \in \Gamma} \max_{0 \leq t \leq 1} \varphi(\gamma(t))$ with $\Gamma=\{\gamma \in C\l([0,1],X\r): \gamma(0)=u_1, \gamma(1)=u_2\}$. 
    Then $c \geq \eta_\rho$ with $c$ being a critical value of $\varphi$.
\end{theorem}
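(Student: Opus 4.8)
The plan is to argue by contradiction via the deformation theorem, which is available precisely because $\varphi$ satisfies the $C$-condition.

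First I would record two elementary facts. Since $\|u_2-u_1\|>\rho$, the point $u_1$ lies in the open ball $B_\rho(u_1):=\{u\in X:\|u-u_1\|<\rho\}$ while $u_2$ lies outside its closure; hence, by continuity, every path $\gamma\in\Gamma$ meets the sphere $\partial B_\rho(u_1)=\{u\in X:\|u-u_1\|_X=\rho\}$. Consequently $\max_{0\le t\le1}\varphi(\gamma(t))\ge\eta_\rho$ for every $\gamma\in\Gamma$, and therefore $c\ge\eta_\rho$. Moreover $c<+\infty$, because the straight-line path $t\mapsto(1-t)u_1+tu_2$ belongs to $\Gamma$ and $\varphi$, being continuous, is bounded on the compact segment joining $u_1$ and $u_2$.

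Next, suppose for contradiction that $c$ is \emph{not} a critical value, that is, $K_c:=\{u\in X:\varphi'(u)=0,\ \varphi(u)=c\}=\emptyset$. Since $c\ge\eta_\rho>\max\{\varphi(u_1),\varphi(u_2)\}$, I may fix $\bar\varepsilon\in\bigl(0,\,c-\max\{\varphi(u_1),\varphi(u_2)\}\bigr)$. Because $\varphi$ satisfies the $C$-condition and $K_c=\emptyset$, the deformation theorem yields some $\varepsilon\in(0,\bar\varepsilon)$ and a continuous map $h:[0,1]\times X\to X$ with the properties: $h(0,\cdot)=\id_X$; $\varphi(h(t,u))\le\varphi(u)$ for all $(t,u)\in[0,1]\times X$; $h(t,u)=u$ whenever $\varphi(u)\le c-\bar\varepsilon$; and $h\bigl(1,\varphi^{c+\varepsilon}\bigr)\subseteq\varphi^{c-\varepsilon}$, where $\varphi^{a}:=\{u\in X:\varphi(u)\le a\}$. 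The construction of $h$ is the technical heart of the argument and the only real subtlety: one builds a locally Lipschitz pseudo-gradient-type vector field $V$ on a neighborhood of $\varphi^{-1}([c-\bar\varepsilon,c+\bar\varepsilon])$ satisfying a growth bound of the form $\|V(u)\|_X\le 1+\|u\|_X$ together with a uniform coercivity estimate $\langle\varphi'(u),V(u)\rangle\ge\delta>0$ on the relevant energy band; the flow generated by $-V$ then gives $h$. It is exactly here that the $C$-condition (in place of the Palais--Smale condition) is used: it both guarantees such a $\delta>0$ away from $K_c=\emptyset$ and, through the weight $(1+\|u\|_X)$ in the bound on $V$, prevents the flow from blowing up in finite time, so that it is globally defined on $[0,1]$.

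Finally, by the definition of $c$ as an infimum there exists a path $\gamma\in\Gamma$ with $\max_{0\le t\le1}\varphi(\gamma(t))<c+\varepsilon$, i.e. $\gamma([0,1])\subseteq\varphi^{c+\varepsilon}$. Set $\gamma_1:=h(1,\gamma(\cdot))$. Since $\varphi(u_i)<c-\bar\varepsilon$ for $i=1,2$, the map $h(1,\cdot)$ fixes $u_1$ and $u_2$, so $\gamma_1\in C([0,1],X)$ with $\gamma_1(0)=u_1$, $\gamma_1(1)=u_2$, hence $\gamma_1\in\Gamma$. On the other hand $\gamma_1([0,1])=h\bigl(1,\gamma([0,1])\bigr)\subseteq h\bigl(1,\varphi^{c+\varepsilon}\bigr)\subseteq\varphi^{c-\varepsilon}$, so that $\max_{0\le t\le1}\varphi(\gamma_1(t))\le c-\varepsilon<c$, contradicting $c=\inf_{\gamma\in\Gamma}\max_{0\le t\le1}\varphi(\gamma(t))$. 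Therefore $K_c\neq\emptyset$, i.e. $c$ is a critical value of $\varphi$; together with the inequality $c\ge\eta_\rho$ established above, this completes the proof.
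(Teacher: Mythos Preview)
The paper does not actually prove this theorem; it is stated in the preliminaries section as a known tool from critical point theory (the mountain pass theorem under the Cerami condition), to be applied later in Sections~3--6. Your argument is the standard deformation-based proof and is correct: you establish $c\ge\eta_\rho$ by the intermediate-value property of continuous paths crossing the sphere $\partial B_\rho(u_1)$, and then derive a contradiction from the deformation theorem (valid under the $C$-condition) by pushing a near-optimal path below the level $c$. Since there is no proof in the paper to compare against, nothing further needs to be said beyond noting that your write-up matches the classical approach one finds, for instance, in the references the paper cites for this material.
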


In the analysis of problem \eqref{problem} in addition to the Sobolev space $\W1p0$ we will also use the ordered Banach space
\begin{align*}
    C^1_0(\overline{\Omega})= \left\{u \in C^1(\overline{\Omega}): u\big|_{\partial \Omega}=0 \right\}
\end{align*}
and its positive cone
\begin{align*}
    C^1_0(\overline{\Omega})_+=\left\{u \in C^1_0(\overline{\Omega}): u(x) \geq 0 \text{ for all } x \in \overline{\Omega}\right\}.
\end{align*}
This cone has a nonempty interior given by
\begin{align*}
    \ints \left(C^1_0(\overline{\Omega})_+\right)=\left\{u \in C^1_0(\overline{\Omega})_+: u(x)>0 \text{ for all } x \in \Omega \text{ and } \frac{\partial u}{\partial n}(x)<0 \text{ for all } x \in \partial \Omega \right\},
\end{align*}
where $n(\cdot)$ stands for the outward unit normal on $\partial \Omega$.

Let $\vartheta \in C^1(0,+\infty)$ be a function satisfying
\begin{align}\label{hypotheses_theta}
    0 < \hat{c}\leq \frac{t \vartheta'(t)}{\vartheta(t)} \leq c_0 \quad \text{ and } \quad c_1 t^{p-1} \leq \vartheta(t) \leq c_2 (1+t^{p-1})
\end{align}
for all $t>0$ and with some constants $\hat{c}, c_0, c_1,c_2>0$.

Then the hypotheses on $a(\cdot)$ are the following.
\begin{enumerate}[leftmargin=1.2cm]
    \item[H(a):]
	$a(\xi)=a_0(\|\xi\|)\xi$ for all $\xi \in \R^N$ with $a_0(t)>0$ for all $t>0$ and\\
	\begin{enumerate}
	    \item[(i)]
		$a_0 \in C^1(0,\infty), t\mapsto ta_0(t)$ is strictly increasing, $\lim_{t \to 0^+} t a_0(t)=0$, and $\displaystyle \lim_{t \to 0^+} \frac{t a_0'(t)}{a_0(t)}>-1$;
	    \item[(ii)]
		$\displaystyle\|\nabla a(\xi)\| \leq c_3 \frac{\vartheta\l(\|\xi\|\r)}{\|\xi\|}$ for all $\xi \in \R^N \setminus \{0\}$ and some $c_3>0$;\\
	    \item[(iii)]
		$\displaystyle\l(\nabla a(\xi) y,y\r)_{\R^N} \geq \frac{\vartheta \l(\|\xi\|\r)}{\|\xi\|} \|y\|^2$ for all $\xi \in \R^N \setminus\{0\}$ and all $y \in \R^N$.\\
	\end{enumerate}
\end{enumerate}

\begin{remark}
    Owing to hypothesis H(a)(i) it follows that $a \in C^1\l(\R^N\setminus\{0\},\R^N\r)\cap C\l(\R^N,\R^N\r)$ and hence, hypotheses H(a)(ii), (iii) make sense. Let $G_0(t)=\int^t_0 s a_0(s) ds$ and let $G(\xi)=G_0(\|\xi\|)$ for all $\xi \in \R^N$. Then
    \begin{align*}
	\nabla G(\xi)=G'_0(\|\xi\|)\frac{\xi}{\|\xi\|}=a_0(\|\xi\|)\xi=a(\xi) \quad \text{ for all } \xi \in \R^N\setminus\{0\},
    \end{align*}
    which means that $G(\cdot)$ is the primitive of $a(\cdot)$. Obviously, $G(\cdot)$ is convex and since $G(0)=0$ we have the estimate
    \begin{align}\label{estimate}
	G(\xi) \leq (a(\xi),\xi)_{\R^N} \quad \text{for all }\xi \in \R^N.
    \end{align}
\end{remark}

These hypotheses have some interesting consequences on the map $a(\cdot)$.

\begin{lemma}\label{lemma_properties}
    Let the hypotheses H(a) be satisfied. Then there hold
    \begin{enumerate}[leftmargin=0.8cm]
	\item[(a)]
	    $\xi \to a(\xi)$ is maximal monotone and strictly monotone;
	\item[(b)]
	    $\|a(\xi)\| \leq c_4 (1+\|\xi\|^{p-1})$ for all $\xi \in \R^N$ and some $c_4>0$;
	\item[(c)]
	    $(a(\xi),\xi)_{\R^N} \geq \frac{c_1}{p-1}\|\xi\|^p$ for all $\xi \in \R^N$.
    \end{enumerate}
\end{lemma}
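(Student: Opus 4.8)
The plan is to derive each of the three assertions directly from hypotheses H(a) together with the structural bounds \eqref{hypotheses_theta} on $\vartheta$, using the representation $a(\xi)=a_0(\|\xi\|)\xi$ and the primitive $G$ introduced in the Remark.

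\medskip

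\textbf{Part (a).} Monotonicity is essentially built into H(a)(i): since $t\mapsto ta_0(t)$ is strictly increasing, for $\xi,\eta\in\R^N$ one writes $(a(\xi)-a(\eta),\xi-\eta)_{\R^N}$ and uses the standard argument that the gradient of a convex function is monotone — indeed $G$ is convex (as noted in the Remark, $G_0'(t)=ta_0(t)$ is increasing so $G_0$ is convex, hence $G(\xi)=G_0(\|\xi\|)$ is convex as a composition of an increasing convex function with a norm), so $a=\nabla G$ is monotone on $\R^N\setminus\{0\}$ and extends to a monotone map on $\R^N$ by continuity. For strict monotonicity I would argue that if $(a(\xi)-a(\eta),\xi-\eta)_{\R^N}=0$ then, by H(a)(iii) applied along the segment $[\eta,\xi]$ (where it does not pass through $0$), the integrand vanishes, forcing $\xi=\eta$; the case where the segment meets $0$ is handled separately and is trivial. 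Maximal monotonicity then follows from monotonicity plus continuity (a monotone everywhere-defined hemicontinuous operator on a reflexive Banach space is maximal monotone).

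\medskip

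\textbf{Part (b).} From \eqref{hypotheses_theta} we have $\vartheta(t)\le c_2(1+t^{p-1})$, so H(a)(ii) gives $\|\nabla a(\xi)\|\le c_3 c_2 (1+\|\xi\|^{p-1})/\|\xi\|$ for $\xi\ne 0$. Writing $a(\xi)=a(\xi)-a(0)+a(0)$ and noting $a(0)=0$ (from $\lim_{t\to0^+}ta_0(t)=0$), I integrate $\nabla a$ along the ray from $0$ to $\xi$: $a(\xi)=\int_0^1 \nabla a(t\xi)\,\xi\,dt$, whence $\|a(\xi)\|\le \int_0^1 \|\nabla a(t\xi)\|\,\|\xi\|\,dt \le c_3c_2\int_0^1 (1+t^{p-1}\|\xi\|^{p-1})/t\,dt$. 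The $1/t$ factor looks dangerous but is an artifact; the cleaner route is to estimate $\|a(\xi)\| = a_0(\|\xi\|)\|\xi\|$ directly by integrating the scalar relation $\frac{d}{dt}(ta_0(t)) \le \|\nabla a(\xi)\||_{\|\xi\|=t}$ or, more simply, to use the upper bound on $\vartheta$ together with H(a)(iii) evaluated at $y=\xi/\|\xi\|$ to bound $a_0(\|\xi\|)\|\xi\|=(a(\xi),\xi/\|\xi\|)$ — but that gives a lower bound, not an upper one. The correct and standard argument: by H(a)(ii), $|(ta_0(t))'| = |G_0''(t)|$ (when $G_0$ is twice differentiable) or more robustly, integrate $ta_0(t) = \int_0^t (sa_0(s))'\,ds$ and control $(sa_0(s))'$ using H(a)(ii), which bounds $\|\nabla a(\xi)\|$ and hence in particular the radial derivative $(sa_0(s))'$ along $\xi = s e$, by $c_3\vartheta(s)/s \le c_3 c_2(1+s^{p-1})/s$; then $ta_0(t)\le c_3c_2\int_0^t (1+s^{p-1})/s\,ds$ — again the $1/s$ singularity. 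To avoid this I would instead split at $t=1$: for $t\ge 1$ use $\vartheta(t)\le 2c_2 t^{p-1}$ and the bound on the logarithmic derivative in \eqref{hypotheses_theta} to get $\vartheta(t)\le \vartheta(1)t^{c_0}$ is not needed; rather, I use $\frac{(ta_0(t))'}{ta_0(t)}$ controlled via H(a) to compare $ta_0(t)$ with $\vartheta(t)$ directly. In fact the cleanest statement is: H(a)(ii) and (iii) together pinch $a_0(\|\xi\|)\|\xi\|$ between constant multiples of $\vartheta(\|\xi\|)$, and then \eqref{hypotheses_theta} finishes it. This pinching is the technical heart of (b) and I expect it to be the one requiring care.

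\medskip

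\textbf{Part (c).} Here I use H(a)(iii) with $y=\xi$: $(\nabla a(\xi)\xi,\xi)_{\R^N}\ge \frac{\vartheta(\|\xi\|)}{\|\xi\|}\|\xi\|^2 = \vartheta(\|\xi\|)\|\xi\|$. Integrating along the segment $t\mapsto t\xi$, $t\in[0,1]$, and using $a(\xi)=\int_0^1\nabla a(t\xi)\xi\,dt$ together with $(a(\xi),\xi)_{\R^N}=\int_0^1 \frac{1}{t}(\nabla a(t\xi)(t\xi),(t\xi))_{\R^N}\,dt \ge \int_0^1 \frac{1}{t}\vartheta(t\|\xi\|)\,t\|\xi\|\,dt = \|\xi\|\int_0^1 \vartheta(t\|\xi\|)\,dt$, substituting $s=t\|\xi\|$ gives $(a(\xi),\xi)_{\R^N}\ge \int_0^{\|\xi\|}\vartheta(s)\,ds \ge c_1\int_0^{\|\xi\|}s^{p-1}\,ds = \frac{c_1}{p}\|\xi\|^p$. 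To land the stated constant $\frac{c_1}{p-1}$ rather than $\frac{c_1}{p}$ I would instead argue via H(a)(i): the quantity $\lim_{t\to0^+}\frac{ta_0'(t)}{a_0(t)}>-1$ means $\frac{(ta_0(t))'}{a_0(t)} = 1 + \frac{ta_0'(t)}{a_0(t)} > 0$ near $0$, and combined with the left inequality in \eqref{hypotheses_theta} one gets the sharper radial lower bound $\frac{t\vartheta'(t)}{\vartheta(t)}\ge\hat c$, i.e. $\vartheta(t)/t^{\hat c}$ is nondecreasing, giving better control; alternatively one observes $G(\xi)\le(a(\xi),\xi)_{\R^N}$ from \eqref{estimate} and bounds $G$ below. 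The constant discrepancy is cosmetic — the essential point is the coercivity estimate $(a(\xi),\xi)_{\R^N}\gtrsim\|\xi\|^p$ — and I would track the constants carefully only at the end. The main obstacle throughout is bookkeeping with the radial singularity at $\xi=0$ and extracting clean power-type bounds from the logarithmic-derivative conditions in \eqref{hypotheses_theta} and H(a)(i); none of it is deep, but it requires attention.
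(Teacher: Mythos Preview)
The paper states this lemma without proof (it is treated as a standard consequence of the structural hypotheses H(a) and \eqref{hypotheses_theta}), so there is no argument to compare against. Your overall strategy is sound, but two concrete points need fixing.

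\textbf{Part (b).} Your several attempts via integrating $\nabla a$ along rays all founder on the same $1/t$ singularity, and you never actually carry out the ``pinching'' you announce at the end. The clean route is the one you almost reach: compute $\nabla a(\xi)=a_0(\|\xi\|)I+a_0'(\|\xi\|)\,\|\xi\|^{-1}\xi\otimes\xi$ and observe that $a_0(\|\xi\|)$ is itself an eigenvalue of $\nabla a(\xi)$ (in any direction orthogonal to $\xi$). Hypothesis H(a)(ii) then gives directly $a_0(\|\xi\|)\le\|\nabla a(\xi)\|\le c_3\,\vartheta(\|\xi\|)/\|\xi\|$, whence $\|a(\xi)\|=a_0(\|\xi\|)\|\xi\|\le c_3\,\vartheta(\|\xi\|)\le c_3c_2(1+\|\xi\|^{p-1})$. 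No integration, no singularity.

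\textbf{Part (c).} Your identity $(a(\xi),\xi)_{\R^N}=\int_0^1 \tfrac{1}{t}\bigl(\nabla a(t\xi)(t\xi),t\xi\bigr)_{\R^N}\,dt$ is off by a factor of $t$: the right-hand side equals $\int_0^1 t\,(\nabla a(t\xi)\xi,\xi)_{\R^N}\,dt$, not $\int_0^1 (\nabla a(t\xi)\xi,\xi)_{\R^N}\,dt$. Drop the spurious $\tfrac{1}{t}$ and the computation runs correctly:
\[
(a(\xi),\xi)_{\R^N}=\int_0^1(\nabla a(t\xi)\xi,\xi)_{\R^N}\,dt\ge\int_0^1\frac{\vartheta(t\|\xi\|)}{t\|\xi\|}\,\|\xi\|^2\,dt=\|\xi\|\int_0^{\|\xi\|}\frac{\vartheta(s)}{s}\,ds\ge c_1\|\xi\|\int_0^{\|\xi\|}s^{p-2}\,ds=\frac{c_1}{p-1}\|\xi\|^p,
\]
landing exactly the stated constant without any appeal to H(a)(i) or to $\hat c$. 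So the ``constant discrepancy'' you flag is not cosmetic --- it is an arithmetic slip, and once corrected your own argument already gives $\frac{c_1}{p-1}$.
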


Taking into account Lemma \ref{lemma_properties} combined with \eqref{estimate} we infer the following growth estimates for the primitive $G(\cdot)$.

\begin{corollary}\label{corollary_upper_lower_estimates}
    If hypotheses H(a) hold, then
    \begin{align*}
	\frac{c_1}{p(p-1)}\|\xi\|^p \leq G(\xi) \leq c_5\l(1+\|\xi\|^p\r) \quad \text{for all } \xi \in \R^N \text{ and some } c_5>0.
    \end{align*}
\end{corollary}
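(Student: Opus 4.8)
The plan is to combine the two one-sided estimates from Lemma~\ref{lemma_properties} with the convexity bound \eqref{estimate}. For the lower bound, recall that $G(\xi)=G_0(\|\xi\|)$ where $G_0(t)=\int_0^t s a_0(s)\,ds$. I would start from the pointwise identity $\tfrac{d}{ds}G_0(s)=s a_0(s)=(a(s\xi/\|\xi\|),s\xi/\|\xi\|)_{\R^N}/s$ when $\|\xi\|=1$; more directly, since $(a(\eta),\eta)_{\R^N}=\|\eta\| a_0(\|\eta\|)\cdot\|\eta\|=\|\eta\|^2 a_0(\|\eta\|)$, Lemma~\ref{lemma_properties}(c) gives $s^2 a_0(s)\geq \tfrac{c_1}{p-1}s^p$, i.e. $s a_0(s)\geq \tfrac{c_1}{p-1}s^{p-1}$ for all $s>0$. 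Integrating this inequality from $0$ to $t=\|\xi\|$ yields $G_0(t)=\int_0^t s a_0(s)\,ds \geq \tfrac{c_1}{p-1}\cdot\tfrac{t^p}{p}=\tfrac{c_1}{p(p-1)}\|\xi\|^p$, which is exactly the claimed lower estimate.

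For the upper bound, I would use \eqref{estimate}, which states $G(\xi)\leq (a(\xi),\xi)_{\R^N}$, together with Lemma~\ref{lemma_properties}(b): by Cauchy--Schwarz, $(a(\xi),\xi)_{\R^N}\leq \|a(\xi)\|\,\|\xi\|\leq c_4(1+\|\xi\|^{p-1})\|\xi\| = c_4(\|\xi\|+\|\xi\|^p)$. It then remains only to absorb the linear term: for $\|\xi\|\leq 1$ we have $\|\xi\|\leq 1$, and for $\|\xi\|\geq 1$ we have $\|\xi\|\leq \|\xi\|^p$, so $\|\xi\|+\|\xi\|^p\leq 1+2\|\xi\|^p\leq 2(1+\|\xi\|^p)$. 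Hence $G(\xi)\leq 2c_4(1+\|\xi\|^p)$, and setting $c_5:=2c_4$ (or any larger constant) gives the upper estimate.

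There is really no serious obstacle here: the corollary is a routine bookkeeping consequence of results already established. The only point requiring a modicum of care is the derivation of the scalar inequality $s a_0(s)\geq \tfrac{c_1}{p-1}s^{p-1}$ from Lemma~\ref{lemma_properties}(c); one must evaluate $(a(\xi),\xi)_{\R^N}$ using the structural form $a(\xi)=a_0(\|\xi\|)\xi$ from H(a) to see that it equals $\|\xi\|^2 a_0(\|\xi\|)$, and then divide by $\|\xi\|$. After that, everything is an elementary integration and a standard splitting of cases to replace $1+\|\xi\|^p+\|\xi\|$ by a constant multiple of $1+\|\xi\|^p$. I would present the argument in two short displays, one for each inequality, and note that the convexity of $G$ (already recorded in the Remark) is what legitimizes \eqref{estimate}.
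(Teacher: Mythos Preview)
Your argument is correct and is exactly the intended one: the paper does not spell out a proof but simply records the corollary as a consequence of Lemma~\ref{lemma_properties} together with \eqref{estimate}, and your two displays---integrating the scalar inequality $s\,a_0(s)\geq \tfrac{c_1}{p-1}s^{p-1}$ coming from part~(c) for the lower bound, and combining \eqref{estimate} with part~(b) and Cauchy--Schwarz for the upper bound---are precisely how one unpacks that hint.
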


\begin{example}\label{example_a}
    The following maps satisfy hypotheses H(a):
    \begin{enumerate}[leftmargin=0.8cm]
	\item[(a)]
	    $a(\xi)=\|\xi\|^{p-2}\xi$ with $1<p<\infty$.\\
	    This map corresponds to the $p$-Laplacian defined by
	    \begin{align*}
		\Delta_p u=\divergenz (\|\nabla u\|^{p-2} \nabla u) \quad \text{for all } u \in W^{1,p}_0(\Omega).
	    \end{align*}
	\item[(b)]
	    $a(\xi)=\|\xi\|^{p-2}\xi+\|\xi\|^{q-2}\xi$ with $1<q<p<\infty$.\\
	    This map corresponds to the $(p,q)$-differential operator defined by 
	    \begin{align*}
		\Delta_p u+ \Delta_q u \quad \text{for all }u \in W^{1,p}_0(\Omega).
	    \end{align*}
	    Note that this operator arises in problems of mathematical physics such as quantum physics (see Benci-D'Avenia-Fortunato-Pisani \cite{2000-Benci-D'Avenia-Fortunato-Pisani}) and in plasma physics and biophysics (see Cherfils-Il{\cprime}yasov \cite{2005-Cherfils-Il'yasov}).
	\item[(c)]
	    $a(\xi)=\l(1+\|\xi\|^2\r)^{\frac{p-2}{2}}\xi$ with $1<p<\infty$.\\
	    This operator represents the generalized $p$-mean curvature differential operator defined by
	    \begin{align*}
		\divergenz \left [(1+\|\nabla u\|^2)^{\frac{p-2}{2}} \nabla u \right] \quad \text{ for all } u \in W^{1,p}_0(\Omega).
	    \end{align*}
	\item[(d)]
	    $a(\xi)=\|\xi\|^{p-2}\xi \left( 1 +\frac{1}{1+\|\xi\|}\right)$ with $1 < p<\infty$. 
    \end{enumerate}
\end{example}

Now, let $f_0 : \Omega \times \R \to \R$ be a Carath\'{e}odory function with subcritical growth in $s \in \R$, that is
\begin{align*}
    |f_0(x,s)| \leq a(x)\left(1+|s|^{r-1}\right) \quad \text{ for a.a. } x \in \Omega \text{, and all } s \in \R,
\end{align*}
with $a \in L^\infty(\Omega)$, and $1<r<p^*$, where $p^*$ is the critical exponent of $p$ given by
\begin{align*}
        p^*=
        \begin{cases}
            \frac{Np}{N-p} \quad & \text{ if } p <N, \\
            +\infty & \text{ if } p \geq N.
        \end{cases} \qquad
\end{align*}
Let $F_0(x,s)=\int^s_0f_0(x,t)dt$ and let $\varphi_0: W^{1,p}_0(\Omega) \to \R$ be the $C^1$-functional defined by
\begin{align*}
    \varphi_0(u)=\int_\Omega G(\nabla u(x)) dx- \int_\Omega F_0(x,u(x)) dx.
\end{align*}
The following result, originally due to Brezis-Nirenberg \cite{1993-Brezis-Nirenberg}, can be found in Gasi{\'n}ski-Papageorgiou \cite{2012-Gasinski-Papageorgiou}. We also refer to earlier results in this direction in Garc{\'{\i}}a Azorero-Manfredi-Peral-Alonso \cite{2000-Garcia-Azorero-Peral-Alonso-Manfredi} and more recently, in Motreanu-Papageorgiou \cite{2011-Motreanu-Papageorgiou} and Winkert \cite{2010-Winkert}.

\begin{proposition}\label{proposition_local_minimizers}
  Let the assumptions in H(a) be satisfied. If $u_0 \in W^{1,p}_0(\Omega)$ is a local $C^1_0(\overline{\Omega})$-minimizer of $\varphi_0$, i.e., there exists $\rho_0>0$ such that
  \begin{align*}
      \varphi_0(u_0) \leq \varphi_0(u_0+h) \quad \text{for all } h \in C^1_0(\overline{\Omega}) \text{ with } \|h\|_{C^1_0(\overline{\Omega})} \leq \rho_0,
  \end{align*}
  then $u_0 \in C^{1,\beta}_0(\overline{\Omega})$ for some $\beta \in (0,1)$ and $u_0$ is also a local $W^{1,p}_0(\Omega)$-minimizer of $\varphi_0$, i.e., there exists $\rho_1>0$ such that
  \begin{align*}
      \varphi_0(u_0) \leq \varphi_0(u_0+h) \quad \text{for all } h \in W^{1,p}_0(\Omega) \text{ with } \|h\|_{W^{1,p}_0(\Omega)} \leq \rho_1.
  \end{align*}
\end{proposition}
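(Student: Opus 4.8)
The plan is to run the classical $C^1$-versus-$W^{1,p}$ argument of Brezis--Nirenberg \cite{1993-Brezis-Nirenberg} (see also \cite{2000-Garcia-Azorero-Peral-Alonso-Manfredi}, \cite{2011-Motreanu-Papageorgiou}), now adapted to the nonhomogeneous operator governed by H(a). First I would record that $u_0$ is a critical point of $\varphi_0$: since $C^1_0(\overline{\Omega})$ is dense in $\W1p0$ and $\varphi_0'\colon \W1p0 \to W^{-1,p'}(\Omega)$ is continuous, comparing $\varphi_0(u_0)$ with $\varphi_0(u_0+th)$ for $h\in C^1_0(\overline{\Omega})$ and $t\to 0^+$ yields $\langle\varphi_0'(u_0),h\rangle=0$ for all such $h$, hence $\varphi_0'(u_0)=0$; thus $u_0$ weakly solves $-\divergenz a(\nabla u_0)=f_0(x,u_0)$ in $\Omega$, $u_0=0$ on $\partial\Omega$. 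Because $f_0$ has subcritical growth, the De Giorgi--Moser iteration (see \cite{2010-Winkert}) gives $u_0\in L^\infty(\Omega)$, so $f_0(\cdot,u_0(\cdot))\in L^\infty(\Omega)$, and the nonlinear regularity theory for operators satisfying H(a) produces $\beta\in(0,1)$ with $u_0\in C^{1,\beta}_0(\overline{\Omega})$.

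Next, to prove the $\W1p0$-minimality I would argue by contradiction: suppose $u_0$ is not a local $\W1p0$-minimizer, so for each $n\in\N$ there is $v_n$ with $\|v_n-u_0\|_{\W1p0}\le 1/n$ and $\varphi_0(v_n)<\varphi_0(u_0)$. On the closed ball $\overline{B}_n:=\{u\in\W1p0:\|u-u_0\|_{\W1p0}\le 1/n\}$ the functional $\varphi_0$ is sequentially weakly lower semicontinuous --- the term $u\mapsto\into G(\nabla u)\,dx$ is convex and continuous (Corollary on the growth of $G$), and $u\mapsto\into F_0(x,u)\,dx$ is sequentially weakly continuous thanks to the compact embedding $\W1p0\hookrightarrow L^r(\Omega)$, $r<p^*$, together with the growth bound on $f_0$ --- while $\overline{B}_n$ is weakly compact. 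Hence $\varphi_0$ attains its infimum over $\overline{B}_n$ at some $u_n$, and $\varphi_0(u_n)\le\varphi_0(v_n)<\varphi_0(u_0)$, so $u_n\neq u_0$.

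Then I would extract the Euler equation for $u_n$: if $\|u_n-u_0\|_{\W1p0}<1/n$ then $\varphi_0'(u_n)=0$, while if $\|u_n-u_0\|_{\W1p0}=1/n$ the Lagrange multiplier rule gives $\mu_n\ge 0$ with $\varphi_0'(u_n)=-\mu_n J'(u_n-u_0)$, where $J(w)=\frac{1}{p}\into|\nabla w|^p\,dx$. In either case $u_n$ weakly solves
\[
-\divergenz\Big(a(\nabla u_n)+\mu_n|\nabla(u_n-u_0)|^{p-2}\nabla(u_n-u_0)\Big)=f_0(x,u_n) \ \text{ in } \Omega, \qquad u_n=0 \ \text{ on } \partial\Omega,
\]
with $\mu_n\ge 0$. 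Since $\|u_n-u_0\|_{\W1p0}\le 1/n$ the sequence $(u_n)$ is bounded in $\W1p0$; the extra term is monotone and vanishes at $\nabla u_0\in L^\infty(\Omega;\R^N)$, so the De Giorgi iteration still applies and controls $\|u_n\|_\infty$ by $\|u_n\|_{\W1p0}$, whence $(f_0(\cdot,u_n(\cdot)))$ is bounded in $L^\infty(\Omega)$. Applying the nonlinear regularity theory to the operator $\xi\mapsto a(\xi)+\mu_n|\xi-\nabla u_0(x)|^{p-2}(\xi-\nabla u_0(x))$ --- whose $x$-dependence is Hölder continuous because $\nabla u_0\in C^{0,\beta}$ --- should give $\alpha\in(0,1)$ and $C>0$, independent of $n$, with $\|u_n\|_{C^{1,\alpha}_0(\overline{\Omega})}\le C$. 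Then the compact embedding $C^{1,\alpha}_0(\overline{\Omega})\hookrightarrow C^1_0(\overline{\Omega})$ forces (along a subsequence) $u_n\to\hat u$ in $C^1_0(\overline{\Omega})$, and since $u_n\to u_0$ in $\W1p0\hookrightarrow L^p(\Omega)$ we get $\hat u=u_0$, i.e.\ $u_n\to u_0$ in $C^1_0(\overline{\Omega})$. For $n$ large, $\|u_n-u_0\|_{C^1_0(\overline{\Omega})}\le\rho_0$, so the local $C^1_0$-minimality of $u_0$ gives $\varphi_0(u_0)\le\varphi_0(u_n)$, contradicting $\varphi_0(u_n)<\varphi_0(u_0)$; hence $u_0$ is a local $\W1p0$-minimizer.

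The main obstacle will be the $n$-uniformity of the $C^{1,\alpha}$-estimate for $u_n$, because the multiplier weight $\mu_n$ need not stay bounded: testing the Euler equation with $u_n-u_0$ only yields $\mu_n=O(n^{p-1})$, so the perturbed operator has an ellipticity/growth ratio that may blow up. To push the argument through one must verify that the regularity constants deteriorate at most polynomially in $\mu_n$ and then play this against the fast rate $\|u_n-u_0\|_{\W1p0}\le 1/n$ --- for instance through a Gagliardo--Nirenberg interpolation between $C^{1,\alpha}_0(\overline{\Omega})$ and $L^{p^*}(\Omega)$ --- to still conclude $u_n\to u_0$ in $C^1_0(\overline{\Omega})$. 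In the Hilbert case $p=2$ this difficulty disappears: dividing the Euler equation by $1+\mu_n$ and invoking $-\divergenz a(\nabla u_0)=f_0(\cdot,u_0)\in L^\infty(\Omega)$ produces the uniform bound at once.
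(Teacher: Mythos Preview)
The paper does not supply its own proof of this proposition: it is quoted as a known result, with a reference to Gasi\'nski--Papageorgiou \cite{2012-Gasinski-Papageorgiou} (and the antecedents \cite{1993-Brezis-Nirenberg}, \cite{2000-Garcia-Azorero-Peral-Alonso-Manfredi}, \cite{2011-Motreanu-Papageorgiou}, \cite{2010-Winkert}). So there is nothing in the paper to compare your argument against line by line; the relevant comparison is with those cited proofs.

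Your outline is exactly the classical Brezis--Nirenberg scheme used in that literature: (i) $u_0$ is a critical point, hence $u_0\in C_0^{1,\beta}(\overline\Omega)$ by the $L^\infty$-bound plus Lieberman's regularity; (ii) argue by contradiction, minimise $\varphi_0$ on shrinking $W^{1,p}_0$-balls, get constrained minimisers $u_n$ with a Lagrange multiplier $\mu_n\ge 0$; (iii) obtain uniform $C^{1,\alpha}$-bounds for $u_n$, pass to the limit in $C^1_0(\overline\Omega)$ and reach a contradiction with the $C^1_0$-minimality. Steps (i) and (ii) are correctly set up.

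The point you isolate in the final paragraph --- possible blow-up of $\mu_n$ and hence of the constants in the regularity estimate --- is the genuine crux, and it is good that you flag it. However, the resolution in the cited references is \emph{not} the interpolation-against-rate mechanism you propose (which would be delicate to make rigorous). Instead one divides the constrained Euler equation by $1+\mu_n$: writing $t_n=\mu_n/(1+\mu_n)\in[0,1)$, the vector field
\[
A_n(x,\xi)=(1-t_n)\,a(\xi)+t_n\,|\xi-\nabla u_0(x)|^{p-2}\bigl(\xi-\nabla u_0(x)\bigr)
\]
is a convex combination of two maps satisfying the same Lieberman-type structure conditions \eqref{hypotheses_theta}, the second one with H\"older $x$-dependence coming from $\nabla u_0\in C^{0,\beta}$. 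The structure constants of $A_n$ are therefore uniform in $n$, and the right-hand side $(1+\mu_n)^{-1}f_0(\cdot,u_n)$ is bounded in $L^\infty(\Omega)$ independently of $n$; Lieberman's theorem \cite{1991-Lieberman} then yields $\alpha\in(0,1)$ and $C>0$, independent of $n$, with $\|u_n\|_{C^{1,\alpha}_0(\overline\Omega)}\le C$. This is the device used in \cite{2012-Gasinski-Papageorgiou} and \cite{2011-Motreanu-Papageorgiou}. Your remark that the case $p=2$ is easier for the same reason (normalisation by $1+\mu_n$) is on the right track; the point is that the normalisation works for all $p$ once one observes that both summands obey the same $p$-growth/ellipticity template.
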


Now, let $\frac{1}{p}+\frac{1}{p'}=1$ and let $A : W^{1,p}_0(\Omega) \to \l(\W1p0\r)^*=W^{-1,p'}(\Omega)$ be the nonlinear map defined by
\begin{align}\label{def_operator}
    \left \lan A(u),v \right \ran= \into (a(\nabla u),\nabla v)_{\R^N} dx \quad \text{for all } u,v \in W^{1,p}_0(\Omega).
\end{align}

Thanks to the results of Gasi{\'n}ski-Papageorgiou \cite{2008-Gasinski-Papageorgiou}) the operator $A$ has the following properties.

\begin{proposition}\label{proposition_basic_properties}
    Under hypotheses H(a) the operator $A: W^{1,p}_0(\Omega) \to W^{-1,p'}(\Omega)$ defined by (\ref{def_operator}) is bounded, continuous, monotone (hence maximal monotone) and of type (S)$_+$, i.e., if $u_n \weak u$ in $W^{1,p}_0(\Omega)$ and  $\limsup_{n \to \infty} \left \lan A(u_n),u_n-u \right \ran \leq 0$, then $u_n \to u$ in $W^{1,p}_0(\Omega)$.
\end{proposition}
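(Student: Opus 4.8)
The plan is to establish the four properties in turn, with boundedness, continuity and (maximal) monotonicity being routine consequences of Lemma~\ref{lemma_properties}, and the $(S)_+$ property carrying the real content. For boundedness, note that for $u,v\in\W1p0$, Lemma~\ref{lemma_properties}(b) and H\"older's inequality give
\[
|\lan A(u),v\ran|\le\into\|a(\nabla u)\|\,\|\nabla v\|\,dx\le c_4\l(|\Om|^{1/p'}+\|\nabla u\|_p^{p-1}\r)\|\nabla v\|_p ,
\]
so $\|A(u)\|_{W^{-1,p'}(\Om)}\le c_4(|\Om|^{1/p'}+\|u\|^{p-1})$ after using the Poincar\'e inequality, which is boundedness. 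For continuity, let $u_n\to u$ in $\W1p0$; by the Urysohn subsequence principle it suffices to extract a subsequence along which $A(u_n)\to A(u)$. Passing to a subsequence we may assume $\nabla u_n\to\nabla u$ a.e.\ in $\Om$ and $\|\nabla u_n\|\le h$ a.e.\ for some $h\in L^p(\Om)$; continuity of $a$ gives $a(\nabla u_n(x))\to a(\nabla u(x))$ a.e., while $\|a(\nabla u_n)\|\le c_4(1+h^{p-1})\in L^{p'}(\Om)$ by Lemma~\ref{lemma_properties}(b), so the dominated convergence theorem yields $a(\nabla u_n)\to a(\nabla u)$ in $L^{p'}(\Om;\R^N)$, whence $A(u_n)\to A(u)$ in $W^{-1,p'}(\Om)$.

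By Lemma~\ref{lemma_properties}(a) the map $\xi\mapsto a(\xi)$ is strictly monotone, so the integrand of
\[
\lan A(u)-A(v),u-v\ran=\into\l(a(\nabla u)-a(\nabla v),\nabla u-\nabla v\r)_{\R^N}dx
\]
is nonnegative and strictly positive wherever $\nabla u\neq\nabla v$; thus $A$ is monotone (indeed strictly monotone). Since $A$ is everywhere defined and continuous, it is automatically maximal monotone, a monotone demicontinuous operator from a Banach space into its dual being maximal monotone.

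For the $(S)_+$ property, assume $u_n\weak u$ in $\W1p0$ and $\lims\lan A(u_n),u_n-u\ran\le0$. Splitting $\lan A(u_n),u_n-u\ran=\lan A(u_n)-A(u),u_n-u\ran+\lan A(u),u_n-u\ran$, the last term tends to $0$ by weak convergence and the first is nonnegative by monotonicity, so
\[
\lim_{n\to\infty}\into\l(a(\nabla u_n)-a(\nabla u),\nabla u_n-\nabla u\r)_{\R^N}dx=0 .
\]
Denote the nonnegative integrand by $\beta_n(x)$, so $\beta_n\to0$ in $L^1(\Om)$; along a subsequence $\beta_n(x)\to0$ a.e. Expanding $\beta_n$ and using Lemma~\ref{lemma_properties}(b),(c) together with Young's inequality to absorb the cross terms, one obtains a pointwise bound
\[
\beta_n(x)\ge c\,\|\nabla u_n(x)\|^p-c'\l(1+\|\nabla u(x)\|^p\r)\qquad(c,c'>0).
\]
Hence for a.a.\ $x$ the sequence $\{\nabla u_n(x)\}_n$ is bounded, so by continuity of $a$ and strict monotonicity (since $\beta_n(x)\to0$) every limit point of it equals $\nabla u(x)$, i.e.\ $\nabla u_n\to\nabla u$ a.e.\ in $\Om$; moreover $\|\nabla u_n\|^p$ is dominated by $\tfrac{1}{c}\beta_n+\tfrac{c'}{c}(1+\|\nabla u\|^p)$, which is uniformly integrable because an $L^1$-convergent sequence is equi-integrable. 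Vitali's convergence theorem then gives $\nabla u_n\to\nabla u$ in $L^p(\Om;\R^N)$, i.e.\ $u_n\to u$ in $\W1p0$, and the Urysohn principle upgrades this to the full sequence.

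The delicate step is precisely this $(S)_+$ part: unlike the $p$-Laplacian, the general map $a$ need not satisfy an explicit inequality of the form $(a(\xi)-a(\eta),\xi-\eta)\ge C\|\xi-\eta\|^p$, so strong convergence of the gradients cannot be read off a single pointwise estimate. The argument above circumvents this by combining the a.e.\ convergence forced by strict monotonicity with the equi-integrability furnished by Lemma~\ref{lemma_properties}(b),(c) and a Vitali argument, rather than by integrating $\nabla a$ along segments, which is awkward since $\nabla a$ is defined only on $\R^N\setminus\{0\}$.
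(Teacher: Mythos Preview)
The paper does not give its own proof of this proposition; it simply invokes Gasi\'nski--Papageorgiou \cite{2008-Gasinski-Papageorgiou}. Your proposal, by contrast, is a complete self-contained argument, and it is correct. Boundedness, continuity and (maximal) monotonicity follow exactly as you say from Lemma~\ref{lemma_properties}(a),(b) and the standard fact that a monotone, everywhere defined, demicontinuous map is maximal monotone. For the $(S)_+$ part your strategy---deduce $\int_\Omega\beta_n\,dx\to0$ with $\beta_n\ge0$, pass to an a.e.\ subsequence, use the coercivity/growth bounds of Lemma~\ref{lemma_properties}(b),(c) together with Young's inequality to obtain the pointwise estimate $\beta_n(x)\ge c\|\nabla u_n(x)\|^p-c'(1+\|\nabla u(x)\|^p)$, infer a.e.\ convergence of the gradients from strict monotonicity, and close with Vitali and the Urysohn subsequence principle---is the standard route for general monotone maps that lack an explicit lower bound of the form $(a(\xi)-a(\eta),\xi-\eta)\ge C\|\xi-\eta\|^p$. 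The only point worth making explicit when you write it up is that the $L^1$-convergence $\beta_n\to0$ (hence equi-integrability of $\{\beta_n\}$, and thus of $\{\|\nabla u_n\|^p\}$) holds along the \emph{full} sequence, so the Urysohn argument is legitimate: every subsequence admits a further subsequence along which $\beta_n\to0$ a.e., and then your chain of implications yields strong convergence of the gradients.
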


Given $1<r<\infty$, the $r$-Laplacian $\Delta_r$  is a special case of $A$ which is defined by
\begin{align*}
    \l \lan \Delta_r(u),v \r \ran=\into \|\nabla u\|^{r-2} (\nabla u,\nabla v)_{\R^N} dx \quad \text{for all } u,v \in W^{1,r}_0(\Omega).
\end{align*}
If $r=2$, then $\Delta_r=\Delta$ becomes the well-known Laplace operator.

Let us recall some basic facts about the spectrum of the $r$-Laplacian with Dirichlet boundary condition.
Consider the nonlinear eigenvalue problem
\begin{equation}\label{eigenvalue_problem}
    \begin{aligned}
      -\Delta_r u(x) & = \hat{\lambda} |u(x)|^{r-2} u(x) \quad && \text{in } \Omega,\\
       u & = 0  &&\text{on } \partial \Omega,
    \end{aligned}
\end{equation}
we say that a number $\hat{\lambda} \in \R$ is an eigenvalue of $\l(-\Delta_r,W^{1,r}_0(\Omega)\r)$ if problem (\ref{eigenvalue_problem}) possesses a nontrivial solution $\hat{u} \in W^{1,p}_0(\Omega)$ which is said to be an eigenfunction corresponding to the eigenvalue $\hat{\lambda}$.
The set of all eigenvalues of \eqref{eigenvalue_problem} is denoted by $\hat{\sigma}(r)$ and it is known that $\hat{\sigma}(r)$ has a smallest element $\hat{\lambda}_1(r)$ which has the following properties:
\begin{itemize}
  \item[$\bullet$]
    $\hat{\lambda}_1(r)$ is positive;
  \item[$\bullet$]
    $\hat{\lambda}_1(r)$ is isolated, that is, there exists $\varepsilon>0$ such that $\left(\hat{\lambda}_1(r),\hat{\lambda}_1(r)+\varepsilon \right) \cap \hat{\sigma}(r)=\emptyset$;
  \item[$\bullet$]
    $\hat{\lambda}_1(r)$ is simple, that is, if $u,v$ are two eigenfunctions corresponding to $\hat{\lambda}_1(r)$, then $u=k v$ for some $k \in \R\setminus\{0\}$;
  \item[$\bullet$]
    \begin{align}\label{eigenvalue_infimum}\hat{\lambda}_1(r)=\inf \left [ \frac{\|\nabla u\|_{L^r(\Omega)}^r}{\|u\|_{L^r(\Omega)}^r}: u \in W^{1,r}_0(\Omega), u \neq 0 \right ].\end{align}
\end{itemize}

The infimum in \eqref{eigenvalue_infimum} is realized on the one dimensional eigenspace corresponding to $\hat{\lambda}_1(r)>0$. In what follows we denote by $\hat{u}_1(r)$ the $L^r$-normalized eigenfunction (i.e. $\|\hat{u}_1(r)\|_{L^r(\Omega)}=1$) associated to $\hat{\lambda}_1(r)$. From the representation in \eqref{eigenvalue_infimum} we easily see that $\hat{u}_1(r)$ does not change sign in $\Omega$ and so we may assume that $\hat{u}_1(r) \geq 0$. The nonlinear regularity theory implies that $\hat{u}_1(r) \in C^1_0(\close)$ and the usage of Vazquez's strong maximum principle \cite{1984-Vazquez} provides that $\hat{u}_1(r) \in \interior$.

As a consequence of the properties above we have the following simple lemma (see Papageorgiou-Kyritsi Yiallourou \cite[p. 356]{2009-Papageorgiou-Kyritsi-Yiallourou}).

\begin{lemma}\label{lemma_aux}
    Let $\eta \in \Linf_+$ be such that $\eta(x) \leq \hat{\lambda}_1(p)$ a.e. in $\Omega$ and $\eta \neq \hat{\lambda}_1(p)$. Then there exists a positive number $\kappa$ such that
    \begin{align*}
	\|\nabla u\|^p_{L^p(\Omega)}-\into \eta(x)|u(x)|^pdx \geq \kappa \|\nabla u\|^p_{L^p(\Omega)} \quad \text{for all } u \in W^{1,p}_0(\Omega).
    \end{align*}
\end{lemma}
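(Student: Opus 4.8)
The plan is to argue by contradiction along the lines of the standard eigenvalue-type argument. Suppose the conclusion fails. Then for every $n \in \N$ we can find $u_n \in \W1p0$ with
\begin{align*}
    \|\nabla u_n\|^p_{L^p(\Omega)}-\into \eta(x)|u_n(x)|^pdx < \frac{1}{n}\|\nabla u_n\|^p_{L^p(\Omega)}.
\end{align*}
By homogeneity we may normalize, setting $y_n = u_n/\|\nabla u_n\|_{L^p(\Omega)}$, so that $\|\nabla y_n\|_{L^p(\Omega)}=1$ and
\begin{align}\label{eq_plan_1}
    \|\nabla y_n\|^p_{L^p(\Omega)}-\into \eta(x)|y_n(x)|^pdx < \frac{1}{n}.
\end{align}
The sequence $(y_n)$ is bounded in $\W1p0$, so, passing to a subsequence, $y_n \weak y$ in $\W1p0$ and $y_n \to y$ in $\Lp{p}$ by the compact Sobolev embedding (recall $p < p^*$).

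Next I would pass to the limit in \eqref{eq_plan_1}. Since $\eta \in \Linf$ and $y_n \to y$ in $\Lp{p}$, we have $\into \eta(x)|y_n(x)|^pdx \to \into \eta(x)|y(x)|^pdx$, while weak lower semicontinuity of the norm gives $\|\nabla y\|^p_{L^p(\Omega)} \leq \limi \|\nabla y_n\|^p_{L^p(\Omega)}$. Combining these with \eqref{eq_plan_1} yields
\begin{align*}
    \|\nabla y\|^p_{L^p(\Omega)}-\into \eta(x)|y(x)|^pdx \leq 0.
\end{align*}
On the other hand, the variational characterization \eqref{eigenvalue_infimum} and the hypothesis $\eta(x)\leq\hat\lambda_1(p)$ a.e. force $\|\nabla y\|^p_{L^p(\Omega)} \geq \hat\lambda_1(p)\|y\|^p_{L^p(\Omega)} \geq \into \eta(x)|y(x)|^pdx$, so in fact equality holds throughout; in particular $\into(\hat\lambda_1(p)-\eta(x))|y(x)|^pdx = 0$.

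Now I distinguish two cases. If $y = 0$, then $\into \eta(x)|y_n(x)|^pdx \to 0$, and \eqref{eq_plan_1} gives $\lims \|\nabla y_n\|^p_{L^p(\Omega)} \leq 0$, contradicting $\|\nabla y_n\|_{L^p(\Omega)}=1$. (Equivalently: $y_n \to 0$ in $\Lp{p}$ together with $\|\nabla y_n\|_{L^p(\Omega)}=1$ contradicts \eqref{eq_plan_1} after taking limits.) If $y \neq 0$, then the equality $\|\nabla y\|^p_{L^p(\Omega)} = \hat\lambda_1(p)\|y\|^p_{L^p(\Omega)}$ means $y$ attains the infimum in \eqref{eigenvalue_infimum}, hence is an eigenfunction for $\hat\lambda_1(p)$; by simplicity of $\hat\lambda_1(p)$ we may take $y = \tau \hat u_1(p)$ with $\tau \neq 0$, so $|y(x)| > 0$ for a.a. $x \in \Omega$. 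But then $\into(\hat\lambda_1(p)-\eta(x))|y(x)|^pdx = 0$ with a strictly positive weight forces $\eta(x) = \hat\lambda_1(p)$ a.e., contradicting $\eta \neq \hat\lambda_1(p)$. The main obstacle, such as it is, is the bookkeeping in passing to the limit and correctly invoking the compact embedding; the genuinely essential inputs are the compactness of $\W1p0 \hookrightarrow \Lp{p}$, the weak lower semicontinuity of the gradient norm, and the simplicity together with the strict positivity of $\hat u_1(p)$ in $\Omega$ — once equality is reached, both cases close immediately.
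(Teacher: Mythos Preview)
Your argument is correct and is exactly the standard proof of this inequality. The paper itself does not supply a proof of this lemma but simply refers the reader to Papageorgiou--Kyritsi-Yiallourou \cite[p.~356]{2009-Papageorgiou-Kyritsi-Yiallourou}; the argument found there is precisely the contradiction/normalization scheme you have written out, so there is nothing further to compare.
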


The Lusternik-Schnirelmann minimax scheme produces a strictly increasing sequence $\left(\hat{\lambda}_k(r) \right)_{k \geq 1}$ of eigenvalues such that $\hat{\lambda}_k(r) \to + \infty$ as $k \to \infty$. We do not know if this sequence exhausts the whole spectrum of $(-\Delta_r, W^{1,r}_0(\Omega))$ but if $N=1$ (ordinary differential equations) or if $r=2$ (linear eigenvalue problem), then the Lusternik-Schnirelmann sequence of eigenvalues is the whole spectrum. In the case $r=2$ we denote by $E\l(\hat{\lambda}_k(2)\r), k \geq 1,$ the eigenspace corresponding to the eigenvalue $\hat{\lambda}_k(2)$ and we have a direct sum decomposition of the form
\begin{align*}
    H^1_0(\Omega)=\overline{\bigoplus_{k \geq 1} E\l(\hat{\lambda}_k(2)\r)}.
\end{align*}

Next, let us recall some basic definitions and facts about Morse theory. Let $X$ be a Banach space and let $(Y_1,Y_2)$ be a topological pair such that $Y_2 \subseteq Y_1 \subseteq X$. For every integer $k \geq 0$ the term $H_k(Y_1,Y_2)$ stands for the $k \overset{th}{=}$-relative singular homology group with integer coefficients. Note that $H_k(Y_1,Y_2)=0$ for all $k<0$. 
Given $\varphi \in C^1(X)$ and $c \in \R$, we introduce the following sets:
\begin{align*}
  \begin{aligned}
    & \varphi^c=\{u \in X: \varphi(u) \leq c\}\qquad && \text{(the sublevel set of $\varphi$ at $c$)},\\
    & K_\varphi=\{u\in X: \varphi'(u)=0\} && \text{(the critical set of $\varphi$ at $c$)},\\
    & K^c_\varphi=\{u \in K_\varphi: \varphi(u)=c\}&& \text{(the critical set of $\varphi$ at the level $c$)}.
  \end{aligned}
\end{align*}

For every isolated critical point $u \in K^c_\varphi$ the critical groups of $\varphi$ at $u \in K^c_\varphi$ are defined by
\begin{align*}
    C_k(\varphi,u)=H_k(\varphi^c \cap U, \varphi^c \cap U \setminus \{u\}) \quad \text{for all } k \geq 0,
\end{align*}
where $U$ is a neighborhood of $u$ such that $K_\varphi \cap \varphi^c \cap U=\{u\}$. The excision property of singular homology theory implies that the definition of critical groups above  is independent of the particular choice of the neighborhood $U$.

Suppose that $\varphi \in C^1(X)$ satisfies the C-condition and that $\inf \varphi(K_\varphi)>-\infty$. Let $c<\inf \varphi(K_\varphi)$. The critical groups of $\varphi$ at infinity are defined by
\begin{align}\label{critical_groups_at_infinity}
    C_k(\varphi,\infty)=H_k(X,\varphi^c) \quad \text{for all } k \geq 0
\end{align}
(see Bartsch-Li \cite{Bartsch-Li-1997}). This definition is independent of the choice of the level $c<\inf \varphi(K_\varphi)$ which is a consequence of the second deformation theorem (see, for example, Gasi{\'n}ski-Papageorgiou \cite[p. 628]{2006-Gasinski-Papageorgiou}).

We now assume that $K_\varphi$ is finite and introduce the following series in $t \in \R$:
\begin{align*}
    & M(t,u)=\sum_{k \geq 0} \rank C_k(\varphi,u)t^k \quad \text{for all }u \in K_\varphi,\\
    & P(t,\infty)=\sum_{k \geq 0} \rank C_k(\varphi,\infty)t^k.
\end{align*}

Then, the Morse relation (see \cite[Theorem 5.1.29]{2005-Chang}) reads as follows:
\begin{align}\label{morse_relation}
    \sum_{u \in K_\varphi}M(t,u)=P(t,\infty)+(1+t)Q(t) \quad \text{for all } t\in \R,
\end{align}
with $Q(t)$ being a formal series in $t \in \R$ with nonnegative integer coefficients.

Suppose next that $X=H$ is a Hilbert space and let $U$ be a neighborhood of a given point $x \in H$. We further assume that $\ph \in C^2(U)$, $K_\ph$ is finite and $u \in K_\ph$. The Morse index of $u$, denoted by $\mu=\mu(u)$, is defined to be the supremum of the dimensions of the vector subspaces of $H$ on which $\ph''(u) \in \mathcal{L}(H)$ is negative definite. The nullity of $u$, denoted by $\nu=\nu(u)$, is defined to be the dimension of $\ker \ph''(U)$. We say that $u \in K_\ph$ is nondegenerate if $\ph''(u)$ is invertible, that is, $\nu=\nu(u)=0$. At a nondegenerate critical point $u$ we have
\begin{align*}
    C_k(\ph,u)=\delta_{k,\mu} \Z \quad \text{for all }k \geq 0,
\end{align*}
where $\delta_{k,\mu}$ stands for the well-known Kronecker symbol.


Finally, before closing this preparatory section, let us fix our notation. Throughout this paper we denote the norm of $W^{1,p}_0(\Omega)$ through $\|\cdot\|_{W^{1,p}_0(\Omega)}$ and thanks to the Poincar\'{e} inequality it holds $\|u\|_{W^{1,p}_0(\Omega)}=\|\nabla u\|_{L^p(\Omega)}$ for all $u \in W^{1,p}_0(\Omega)$. The norm of $\R^N$ is denoted by $\|\cdot\|$ and $(\cdot,\cdot)_{\RN}$ stands for the inner product of $\R^N$.

For $s \in \R$, we set $s^{\pm}=\max\{\pm s,0\}$ and for $u \in W^{1,p}_0(\Omega)$ we define $u^{\pm}(\cdot)=u(\cdot)^{\pm}$. It is well known that
\begin{align*}
    u^{\pm} \in W^{1,p}_0(\Omega), \quad |u|=u^++u^-, \quad u=u^+-u^-.
\end{align*}
The Lebesgue measure on $\R^N$ will be stated by $|\cdot|_N$. Finally, for any measurable function $h: \Omega \times \R \to \R$ we define the Nemytskij operator $N_h : L^p(\Omega) \to (L^p(\Omega))^*$ corresponding to the function $h$ by
\begin{align*}
    N_h(u)(\cdot)=h(\cdot,u(\cdot)).
\end{align*}

\section{Three nontrivial solutions}\label{section_three}

In this section, using a combination of variational and Morse theoretic methods, we prove a multiplicity theorem producing three nontrivial solutions for problem \eqref{problem} when the reaction $f(x,\cdot)$ is $(p-1)$-superlinear but does not necessarily satisfies the Ambrosetti-Rabinowitz condition. Our result in this section improves significantly the well-known multiplicity theorem of Wang \cite{1991-Wang}. 

First we slightly strengthen the assumptions on the map $a(\cdot)$.

\begin{enumerate}[leftmargin=1.2cm]
    \item[H(a)$_1$:]
	$a(\xi)=a_0(\|\xi\|)\xi$ for all $\xi \in \R^N$ with $a_0(t)>0$ for all $t>0$, hypotheses H(a)$_1$(i)--(iii) are the same as the corresponding hypotheses H(a)(i)--(iii) and
	\begin{enumerate}
	    \item[(iv)]
		$p G_0(t)-t^2 a_0(t) \geq -c_6$ and $t^2 a_0(t)-G_0(t) \geq \hat{\eta} t^p$ for all $t>0$ and for some $c_6, \hat{\eta}>0$.
	\end{enumerate}
\end{enumerate}

\begin{remark}
    Note that the examples given in Example \ref{example_a} satisfy this new condition stated in H(a)$_1$(iv).
\end{remark}

The hypotheses on the perturbation $f$ are the following:
\begin{enumerate}
  \item[H(f)$_1$:]
    $f: \Omega \times \R \to \R$ is a Carath\'{e}odory function with $f(x,0)=0$ for a.a. $x \in \Omega$
    such that
    \begin{enumerate}
      \item[(i)]
	$|f(x,s)| \leq a(x) \left(1+|s|^{r-1}\right)$ for a.a. $x \in \Omega$, for all $s \in \R$, with $a \in \Linf_+$ and $p < r<p^*$;
      \item[(ii)]
	if $F(x,s)=\int_0^sf(x,t)dt$, then
	\begin{align*}
	  \lim_{s \to \pm \infty} \frac{F(x,s)}{|s|^{p}}=+\infty \quad  \text{uniformly for a.a. } x \in \Omega;
	\end{align*}
      \item[(iii)]
	there exist $\tau \in \left((r-p)\max \left \{ \frac{N}{p},1\right \},p^*\right)$ and $\beta_0>0$ such that
	\begin{align*}
	  \liminf_{s \to \pm \infty} \frac{f(x,s)s-pF(x,s)}{|s|^{\tau}} \geq \beta_0 \quad \text{uniformly for a.a. } x \in \Omega;
	\end{align*}
      \item[(iv)]
	there exists $\eta \in \Linf_+$ with $\eta(x) \leq \frac{c_1}{p-1}\hat{\lambda}_1(p)$ a.e. in $\Omega$ and $\eta \neq \frac{c_1}{p-1}\hat{\lambda}_1(p)$ such that
	\begin{align*}
	    \limsup_{s \to 0} \frac{pF(x,s)}{|s|^p} \leq \eta(x) \quad \text{uniformly for a.a. } x \in \Omega;
	\end{align*}
      \item[(v)]
	for every $\varrho>0$ there exists $\kappa_\varrho>0$ such that
	\begin{align*}
	    f(x,s)s+\kappa_\varrho|s|^p \geq 0 \quad \text{for a.a. }x \in \Omega \text{ and all }|s| \leq \varrho.
	\end{align*}
    \end{enumerate}
\end{enumerate}

\begin{remark}
    Hypothesis H(f)$_1$(ii) amounts to the superlinearity of the primitive $F(x,\cdot)$. This condition together with H(f)$_1$(iii) implies that $f(x,\cdot)$ is $(p-1)$-superlinear. We point out that
    the assumptions in H(f)$_1$(ii), (iii) are weaker than the Ambrosetti-Rabinowitz condition (see \eqref{AR1}, \eqref{AR2}) which is the usual hypothesis when dealing with superlinear problems 
    (see for example Wang \cite{1991-Wang}). Indeed, assume that $f(x,\cdot)$ satisfies the Ambrosetti-Rabinowitz condition and note that we may suppose $(r-p)\max \left \{ \frac{N}{p},1\right \}<\mu$. Hence, we have
    \begin{align*}
	\frac{f(x,s)-pF(x,s)}{|s|^\mu}
	& =\frac{f(x,s)s-\mu F(x,s)}{|s|^\mu}+\frac{(\mu-p)F(x,s)}{|s|^\mu}\\
	& \geq (\mu-p) \eta \quad \text{for all }x\in \Omega \text{ and for all }|s|\geq M
    \end{align*}
    (see \eqref{AR1} and \eqref{AR3}).
\end{remark}

\begin{example}
    For the sake of simplicity we drop the $x$-dependence and consider the following two functions satisfying hypotheses $H(f)_1$:
    \begin{align*}
	& f_1(s)=
	\begin{cases}
	  \eta s^p \quad & \text{if } |s| \leq 1,\\
	  \eta s^r & \text{if } |s|>1
	\end{cases}
	\quad \text{with }\eta \in (0, \hat{\lambda}_1(p)) \text{ and }p<r<p^*;\\
	& f_2(s)=|s|^{p-2}s \ln (1+|s|).
    \end{align*}
    Note that $f_1$ satisfies the Ambrosetti-Rabinowitz condition but $f_2$ does not.
\end{example}

Let $\varphi: W^{1,p}_0(\Omega) \to \R$ be the energy functional of problem (\ref{problem}) given by
\begin{align*}
  \varphi(u)=\into G(\nabla u)dx- \into F(x,u) dx,
\end{align*}
which is of class $C^1$. Furthermore, we define the positive and negative truncations of $f(x,\cdot)$, namely $f_{\pm}(x,s)=f(x,\pm s^{\pm})$, and consider the $C^1$-functionals $\varphi_{\pm}: W^{1,p}_0(\Omega) \to \R$ defined by
\begin{align*}
    \varphi_{\pm}(u)=\into G(\nabla u)dx- \into F_{\pm}(x,u) dx,
\end{align*}
with $F_{\pm}(x,s)=\int^s_0 f_{\pm}(x,t)dt$.

\begin{proposition}\label{proposition_C_condition}
    If H(a)$_1$ and H(f)$_1$ are satisfied, then the functionals $\varphi$ and $\varphi_{\pm}$ fulfill the $C$-condition.
\end{proposition}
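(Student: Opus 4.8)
The plan is to verify the $C$-condition for $\varphi$; the arguments for $\varphi_{\pm}$ are entirely analogous (using the truncated reactions $f_{\pm}$, which inherit the relevant growth and asymptotic properties). So let $(u_n)_{n\geq 1}\subseteq W^{1,p}_0(\Omega)$ be a sequence with $|\varphi(u_n)|\leq M_1$ for some $M_1>0$ and $(1+\|u_n\|_{W^{1,p}_0(\Omega)})\varphi'(u_n)\to 0$ in $W^{-1,p'}(\Omega)$. The first and decisive step is to show that $(u_n)$ is bounded in $W^{1,p}_0(\Omega)$. Once this is done, boundedness plus the $(S)_+$-property of $A$ (Proposition \ref{proposition_basic_properties}) finishes the proof in the routine way: passing to a subsequence with $u_n\weak u$, testing $\varphi'(u_n)$ with $u_n-u$, using that $\into(f(x,u_n))(u_n-u)\,dx\to 0$ by the growth condition H(f)$_1$(i) and the compact Sobolev embedding $W^{1,p}_0(\Omega)\hookrightarrow L^r(\Omega)$ (recall $r<p^*$), we obtain $\limsup_{n\to\infty}\lan A(u_n),u_n-u\ran\leq 0$, hence $u_n\to u$ strongly.

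For the boundedness, I would argue by contradiction: suppose, along a subsequence, $\|u_n\|_{W^{1,p}_0(\Omega)}\to\infty$, and set $y_n=u_n/\|u_n\|_{W^{1,p}_0(\Omega)}$, so $\|y_n\|_{W^{1,p}_0(\Omega)}=1$ and we may assume $y_n\weak y$ in $W^{1,p}_0(\Omega)$, $y_n\to y$ in $L^r(\Omega)$ and pointwise a.e. Combining the two pieces of information in the $C$-condition in the standard way — namely forming $\varphi'(u_n)(u_n) - p\,\varphi(u_n)$, which is bounded — and using the estimate $G(\xi)\leq (a(\xi),\xi)_{\R^N}$ from \eqref{estimate} together with hypothesis H(a)$_1$(iv) (the term $\into[p\,G(\nabla u_n)-\|\nabla u_n\|^2 a_0(\|\nabla u_n\|)]\,dx\geq -c_6|\Omega|_N$ is controlled), one arrives at a bound of the form
\begin{align*}
    \into \bigl(f(x,u_n)u_n - pF(x,u_n)\bigr)\,dx \leq M_2 \quad\text{for all }n\geq 1
\end{align*}
for some $M_2>0$. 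This is precisely the quantity governed by H(f)$_1$(iii): from that hypothesis there are $\beta_1\in(0,\beta_0)$ and $M_3>0$ with $f(x,s)s-pF(x,s)\geq \beta_1|s|^{\tau}$ for a.a.\ $x\in\Omega$ and all $|s|\geq M_3$, while on $\{|s|\leq M_3\}$ the integrand is bounded below by a constant; hence $(u_n)$ is bounded in $L^{\tau}(\Omega)$.

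The endgame is to promote the $L^{\tau}$-bound to a $W^{1,p}_0$-bound, and this is where the precise lower bound $\tau>(r-p)\max\{N/p,1\}$ enters and is the main technical obstacle. Testing $\varphi'(u_n)$ with $u_n$ gives, using H(a)$_1$(iv) (the coercivity-type term $\into[\|\nabla u_n\|^2 a_0(\|\nabla u_n\|)-G_0(\|\nabla u_n\|)]\,dx\geq \hat\eta\|\nabla u_n\|_{L^p(\Omega)}^p$ after recalling $G(\xi)=G_0(\|\xi\|)$) and $|\varphi(u_n)|\leq M_1$,
\begin{align*}
    \hat\eta\,\|u_n\|_{W^{1,p}_0(\Omega)}^p \leq M_4 + \into |f(x,u_n)||u_n|\,dx \leq M_5\Bigl(1+\into |u_n|^{r}\,dx\Bigr),
\end{align*}
using H(f)$_1$(i) for the last step. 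Now I interpolate $\|u_n\|_{L^r}$ between $\|u_n\|_{L^{\tau}}$ (bounded) and $\|u_n\|_{L^{p^*}}$ (controlled by $\|u_n\|_{W^{1,p}_0(\Omega)}$ via Sobolev): writing $\frac{1}{r}=\frac{1-t}{\tau}+\frac{t}{p^*}$ one gets $\into|u_n|^r\leq C\|u_n\|_{W^{1,p}_0(\Omega)}^{tr}$, and the choice of $\tau$ guarantees $tr<p$, so the right-hand side is $o(\|u_n\|_{W^{1,p}_0(\Omega)}^p)$, a contradiction. (When $p\geq N$ one takes any large finite exponent in place of $p^*$, which is why the hypothesis on $\tau$ carries the $\max\{N/p,1\}$.) This contradiction establishes boundedness, and the proof concludes as described above. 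The only point requiring care beyond bookkeeping is checking that the interpolation exponent indeed satisfies $tr<p$ exactly under the stated constraint on $\tau$, which is a short computation.
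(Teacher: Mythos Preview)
Your argument is correct and follows essentially the same route as the paper: combine $p\varphi(u_n)$ with $\langle\varphi'(u_n),u_n\rangle$, invoke H(a)$_1$(iv) to isolate $\int(f(x,u_n)u_n-pF(x,u_n))\,dx$, extract an $L^{\tau}$-bound from H(f)$_1$(iii), and then interpolate between $L^{\tau}$ and $L^{p^*}$ (or a large finite exponent when $p\geq N$) so that $tr<p$ closes the estimate, finishing with the $(S)_+$-property. Two cosmetic remarks: the normalized sequence $y_n$ you introduce is never used (the contradiction comes directly from $\hat\eta\|u_n\|^p\leq C(1+\|u_n\|^{tr})$), and for $\varphi_{\pm}$ the ``analogous'' argument requires the extra step of testing with $-u_n^{\mp}$ to kill the wrong-signed part, exactly as the paper does for $\varphi_+$.
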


\begin{proof}
    We start with the proof for $\varphi_+$. To this end, let $(u_n)_{n\geq 1} \subseteq W^{1,p}_0(\Omega)$ be a sequence such that
    \begin{align}\label{C_cond_1}
	|\varphi_+(u_n)| \leq M_1\quad \text{ for all }n \geq 1
    \end{align}
    with some $M_1>0$ and
    \begin{align}\label{C_cond_2}
	\l(1+\|u_n\|_{\W1p0}\r)\varphi_+'(u_n) \to 0 \text{ in }W^{-1,p'}(\Omega).
    \end{align}
    By means of (\ref{C_cond_2}) we obtain
    \begin{align*}
	|\lan \varphi_+'(u_n),v| \leq \frac{\eps_n \|v\|_{W^{1,p}_0(\Omega)}}{1+\|u_n\|_{W^{1,p}_0(\Omega)}}
    \end{align*}
    for all $v \in \W1p0$ and $\eps_n \searrow 0$ which means that
    \begin{align}\label{C_cond_3}
	\left|\into\left(a(\nabla u_n),\nabla v\right)_{\R^N}dx-\into f_+(x,u_n)vdx \right| \leq \frac{\eps_n \|v\|_{W^{1,p}_0(\Omega)}}{1+\|u_n\|_{W^{1,p}_0(\Omega)}}
    \end{align}
    for all $n \geq 1$. Acting on \eqref{C_cond_3} with $v=-u_n^- \in \W1p0$ and applying Lemma \ref{lemma_properties}(c) yields
    \begin{align*}
	\frac{c_1}{p-1} \|\nabla u_n^- \|_{\W1p0}\leq \eps_n,
    \end{align*}
    for all $n \geq 1$ which means that
    \begin{align}\label{C_cond_4_b}
	u_n^- \to 0 \quad \text{in } \W1p0 \text{ as }n \to +\infty.
    \end{align}
    Then, from (\ref{C_cond_1}) and \eqref{C_cond_4_b} we obtain
    \begin{align}\label{C_cond_5}
	\into p G(\nabla u_n^+)dx-\into pF(x,u_n^+)dx \leq M_2,
    \end{align}
    with some $M_2>0$. Taking $v=u_n^+ \in \W1p0$ in (\ref{C_cond_3}) gives 
    \begin{align}\label{C_cond_4}
	-\into\left(a(\nabla u_n^+),\nabla u_n^+\right)_{\R^N}dx +\into f(x,u_n^+)u_n^+dx  \leq \eps_n,
    \end{align}
    for all $n \geq 1$. Now, adding (\ref{C_cond_5}) and (\ref{C_cond_4}), we get
    \begin{align}\label{C_cond_6}
      \begin{split}
	M_3
	& \geq \into \left[pG(\nabla u_n^+)- \left(a(\nabla u_n^+),\nabla u_n^+\right)_{\R^N}\right]dx\\
	& \qquad +\into \left[f(x,u_n^+)u_n^+ -pF(x,u_n^+)\right]dx,
      \end{split}
    \end{align}
    for all $n \geq 1$ and some $M_3>0$. By virtue of hypothesis H(a)$_1$(iv) we derive from (\ref{C_cond_6})
    \begin{align}\label{C_cond_7}
      \begin{split}
	\into \left(f(x,u_n^+)u_n^+ -pF(x,u_n^+)\right)dx \leq M_4.
      \end{split}
    \end{align}
    Taking into account hypotheses H(f)$_1$(i) and (iii), there is a number $\beta_1 \in \l(0,\beta_0\r)$ and a constant $M_5>0$ such that
    \begin{align}\label{C_cond_8}
	\beta_1 |s|^\tau-M_5\leq f(x,s)s-pF(x,s) \quad \text{for a.a. }x\in \Omega \text{ and for all }s \geq 0.
    \end{align}
    Combining (\ref{C_cond_7}) and \eqref{C_cond_8} gives
    \begin{align}\label{C_cond_9}
	\l(u_n^+\r)_{n\geq 1} \text{ is bounded in } \Lp{\tau}.
    \end{align}
    
    Let us first consider the case $N> p$. Without loss of generality we may suppose that $1<\tau\leq r <p^*$ (cf. hypothesis H(f)$_1$(iii)). Then, we find a number $t \in [0,1)$ such that
    \begin{align}\label{interpolation}
	\frac{1}{r}=\frac{1-t}{\tau}+\frac{t}{p^*}
    \end{align}
    and the usage of the interpolation theory implies that
    \begin{align}\label{C_cond_10}
	\l\|u_n^+\r\|_{L^r(\Omega)} \leq \l\|u_n^+\r\|^{1-t}_{L^\tau(\Omega)}\l\|u_n^+\r\|^t_{L^{p^*}(\Omega)}
    \end{align}
    (see Gasi{\'n}ski-Papageorgiou \cite[p. 905]{2006-Gasinski-Papageorgiou}). Combining \eqref{C_cond_9}, \eqref{C_cond_10}, and the Sobolev embedding theorem yields
    \begin{align}\label{C_cond_11}
	\l\|u_n^+\r\|_{L^r(\Omega)}^r \leq M_6 \l\|u_n^+\r\|^{tr}_{W^{1,p}_0(\Omega)} \quad \text{for all }n \geq 1
    \end{align}
    with some positive constant $M_6$. Applying again $v=u_n^+$ in (\ref{C_cond_3}) one has
    \begin{align}\label{C_cond_12}
	\left|\into\left(a\l(\nabla u_n^+\r),\nabla u_n^+\right)_{\R^N}dx-\into f(x,u_n^+)u_n^+dx \right| \leq \eps_n \quad \text{for all }n  \geq 1.
    \end{align}
    Taking into account the growth condition of hypothesis H(f)$_1$(i) we infer
    \begin{align}\label{C_cond_13}
	f(x,s)s\leq \hat{a}(x)+M_7|s|^r \quad \text{for a.a. }x \in \Omega, \text{ for all }s\in \R,
    \end{align}
    with $\hat{a}\in \Linf$ and $M_7>0$. With the aid of Lemma \ref{lemma_properties}(c) and \eqref{C_cond_13} we obtain from \eqref{C_cond_12}
    \begin{align*}
	\frac{c_1}{p-1}\l\|\nabla u_n^+\r\|^p_{L^p(\Omega)} \leq M_8\left(1+\l\|u_n^+\r\|^r_{L^r(\Omega)}\right) \quad \text{for all } n \geq 1
    \end{align*}
    with $M_8>0$. This estimate in conjunction with \eqref{C_cond_11} yields
    \begin{align}\label{C_cond_14}
	\l\|u_n^+\r\|^p_{W^{1,p}_0(\Omega)}\leq M_9 \left (1+\l\|u_n^+\r\|_{W^{1,p}_0(\Omega)}^{tr} \right) \quad \text{ for all }n\geq 1
    \end{align}
    and for some $M_{9}>0$. Taking into account the choice of $\tau$ (see hypothesis H(f)$_1$(iii)) and relation \eqref{interpolation} we see that $tr<p$ which implies that $\l(u_n^+\r)_{n\geq 1} \subseteq W^{1,p}_0(\Omega)$ is bounded (see \eqref{C_cond_14}).

    Now, let $N\leq p$ and note that in this case we have $p^*=\infty$ and the Sobolev embedding theorem gives $W^{1,p}_0(\Omega) \subseteq L^{\tilde{q}}(\Omega)$ for all $\tilde{q}\in [1,+\infty)$. Let $\hat{q}$ be a number such that $ 1<\tau\leq r<\hat{q}$. As before, we find $t \in [0,1)$ such that
    \begin{align*}
	\frac{1}{r}=\frac{1-t}{\tau}+\frac{t}{\hat{q}}.
    \end{align*}
    Hence
    \begin{align*}
	tr=\frac{\hat{q}(r-\tau)}{\hat{q}-\tau}.
    \end{align*}
    Moreover, we observe that
    \begin{align}\label{C_cond_15}
	tr=\frac{\hat{q}(r-\tau)}{\hat{q}-\tau} \to r-\tau \quad \text{as } \hat{q} \to +\infty=p^*.
    \end{align}
    By the choice of $\tau$ and since $N\leq p$ we have $r-\tau<p$. Combining this fact with \eqref{C_cond_15} we see that $tr<p$ if $\hat{q}$ is chosen large enough. Now we may apply the same arguments as in the case $N>p$ where $p^*$ is replaced by $\hat{q}>r$ sufficiently large. This yields the boundedness of the sequence $\l(u_n^+\r)_{n\geq 1}$ in $W^{1,p}_0(\Omega)$ in the case $N\leq p$ as well.
    We have shown in both cases that $\l(u_n^+\r)_{n\geq 1}$ is bounded in $\W1p0$ and due to \eqref{C_cond_4_b} we have that $\l(u_n\r)_{n\geq 1}$ is bounded in $\W1p0$ as well.
    Now we may suppose that (for a subsequence if necessary)
    \begin{align}\label{C_cond_16}
	u_n \rightharpoonup u \text{ in } W^{1,p}_0(\Omega) \quad \text{and} \quad u_n \to u \text{ in }L^p(\Omega).
    \end{align}
    Using again \eqref{C_cond_3} with the special choice $v=u_n-u$ and passing to the limit as $n$ goes to $+\infty$, we derive, thanks to \eqref{C_cond_16},
    \begin{align*}
	\lim_{n\to \infty}\left \lan A(u_n),u_n-u \right\ran =0.
    \end{align*}
    Since $A$ satisfies the (S)$_+$-property (see Proposition \ref{proposition_basic_properties}) we finally conclude
    \begin{align*}
	u_n \to u \text{ in } W^{1,p}_0(\Omega).
    \end{align*}
    This proves that $\varphi$ fulfills the $C$-condition. Analogously, applying similar arguments, one can prove the same result for the functionals $\varphi$ and $\varphi_{-}$. That finishes the proof.
\end{proof}

Now we are going to show that the functionals $\varphi$ and $\varphi_{\pm}$ satisfy the mountain pass geometry.

\begin{proposition}\label{proposition_phi_phi_plus_minus_zero}
    Assume H(a)$_1$ and H(f)$_1$, then $u=0$ is a local minimizer of the functionals $\varphi$ and $\varphi_{\pm}$.
\end{proposition}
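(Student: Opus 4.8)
The plan is to exploit the behaviour of the primitive near the origin (H(f)$_1$(iv)) together with the subcritical growth (H(f)$_1$(i)) to show that $\varphi$, and likewise $\varphi_{\pm}$, is nonnegative on a small ball around $0$ in $\W1p0$; since $\varphi(0)=\varphi_{\pm}(0)=0$ this gives the assertion, with $0$ in fact a local minimizer in the stronger $\W1p0$-topology. First I would fix $\eps>0$ and, by H(f)$_1$(iv), choose $\delta=\delta(\eps)>0$ with $pF(x,s)\leq(\eta(x)+\eps)|s|^p$ for a.a.\ $x\in\Om$ and all $|s|\leq\delta$; for $|s|>\delta$ the crude bound $|F(x,s)|\leq\|a\|_\infty(|s|+\tfrac1r|s|^r)$ coming from H(f)$_1$(i), after $|s|\leq\delta^{1-r}|s|^r$, is absorbed into a term $c_\eps|s|^r$, so that
\[
  F(x,s)\leq\tfrac1p(\eta(x)+\eps)|s|^p+c_\eps|s|^r\qquad\text{for a.a.\ }x\in\Om,\ \text{all }s\in\R .
\]
Since $F_{+}(x,\cdot)$ coincides with $F(x,\cdot)$ on $[0,\infty)$ and vanishes on $(-\infty,0)$, and symmetrically for $F_{-}$, the same inequality holds with $F_{\pm}$ in place of $F$.

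Next, for $u\in\W1p0$ I would combine $G(\xi)\geq\frac{c_1}{p(p-1)}\|\xi\|^p$ from Corollary~\ref{corollary_upper_lower_estimates} with the displayed bound to get
\[
  \varphi(u)\geq\frac1p\left[\frac{c_1}{p-1}\|\nabla u\|_{L^p(\Om)}^p-\into\eta(x)|u|^p\,dx\right]-\frac{\eps}{p}\|u\|_{L^p(\Om)}^p-c_\eps\|u\|_{L^r(\Om)}^r .
\]
As $\frac{p-1}{c_1}\eta\leq\hat\lambda_1(p)$ with $\frac{p-1}{c_1}\eta\neq\hat\lambda_1(p)$, Lemma~\ref{lemma_aux} applied to $\frac{p-1}{c_1}\eta$ yields $\kappa>0$ such that the bracket above is $\geq\frac{c_1\kappa}{p-1}\|\nabla u\|_{L^p(\Om)}^p$. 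Using Poincaré to bound $\|u\|_{L^p(\Om)}^p$ by $\|\nabla u\|_{L^p(\Om)}^p$, taking $\eps$ small, and invoking the Sobolev embedding $\W1p0\hookrightarrow L^r(\Om)$ (available since $r<p^*$), one arrives at
\[
  \varphi(u)\geq\|\nabla u\|_{L^p(\Om)}^p\bigl(c_*-c_{**}\|\nabla u\|_{L^p(\Om)}^{r-p}\bigr),\qquad c_*,c_{**}>0 .
\]
Since $r>p$, the right-hand side is nonnegative once $\|u\|_{\W1p0}=\|\nabla u\|_{L^p(\Om)}\leq\rho$ for $\rho$ small, i.e.\ $\varphi(u)\geq0=\varphi(0)$ there; the identical chain of inequalities with $\varphi_{\pm}$ and $F_{\pm}$ finishes the proof.

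The only step beyond bookkeeping is checking that the coercivity constant $\frac{c_1}{p-1}$ of Corollary~\ref{corollary_upper_lower_estimates} is exactly the weight carried by H(f)$_1$(iv), so that after rescaling $\eta\mapsto\frac{p-1}{c_1}\eta$ the hypothesis lands precisely in the range $\eta\leq\hat\lambda_1(p)$, $\eta\neq\hat\lambda_1(p)$ covered by Lemma~\ref{lemma_aux} --- this matching is exactly why H(f)$_1$(iv) is stated with that constant. Everything else is a routine combination of Poincaré's and Sobolev's inequalities with the fact $r-p>0$.
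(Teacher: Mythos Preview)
Your argument is correct, and in fact more direct than the paper's. The paper does \emph{not} work in $\W1p0$ from the start: it fixes $\eps>0$, takes only the local estimate $F(x,s)\leq\frac1p(\eta(x)+\eps)|s|^p$ for $|s|\leq\delta$ from H(f)$_1$(iv), and then restricts to $u\in C^1_0(\overline\Om)$ with $\|u\|_{C^1_0(\overline\Om)}\leq\delta$, so that this pointwise bound applies everywhere in $\Om$. The same chain of inequalities (Corollary~\ref{corollary_upper_lower_estimates}, Lemma~\ref{lemma_aux}) then yields $\varphi_+(u)\geq\frac1p\bigl(\kappa-\tfrac{\eps}{\hat\lambda_1(p)}\bigr)\|\nabla u\|_{L^p(\Om)}^p\geq0$ for $\eps$ small, i.e.\ $u=0$ is a local $C^1_0(\overline\Om)$-minimizer; the paper finishes by invoking Proposition~\ref{proposition_local_minimizers} (the Brezis--Nirenberg $C^1$-versus-$W^{1,p}$ result) to upgrade to a $\W1p0$-minimizer.

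Your route trades that nontrivial regularity proposition for the growth hypothesis H(f)$_1$(i) and the Sobolev embedding: by extending the estimate on $F$ globally with an extra $c_\eps|s|^r$ term, you can work directly in $\W1p0$ and absorb the $L^r$-term via $\|u\|_{L^r(\Om)}^r\leq C\|u\|_{\W1p0}^r$ because $r<p^*$ and $r>p$. This is a perfectly standard alternative and arguably cleaner here, since all the ingredients (subcritical growth, Sobolev embedding) are already on the table. The paper's route, on the other hand, uses only the behaviour of $F$ near $s=0$ and would survive even without the global growth bound---but at the price of citing Proposition~\ref{proposition_local_minimizers}.
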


\begin{proof}
    We only show this proposition for $\varphi_+$, the proofs for $\varphi$ and $\varphi_{-}$ can be done similarly. By means of hypothesis H(f)$_1$(iv) we find for every $\varepsilon>0$ a number $\delta=\delta(\varepsilon) >0$ such that
    \begin{align}\label{1}
	F(x,s) \leq \frac{1}{p}(\eta(x)+\varepsilon)|s|^p \quad \text{for a.a. }x \in \Omega \text{ and for all }|s| \leq \delta.
    \end{align}
    Let $u \in C^1_0(\overline{\Omega})$ be such that $\|u\|_{C^1_0(\overline{\Omega})} \leq \delta$. With regards to Corollary \ref{corollary_upper_lower_estimates}, (\ref{1}), Lemma \ref{lemma_aux}, and \eqref{eigenvalue_infimum} we obtain
    \begin{align}\label{u=0_local_minimizer}
	\begin{split}
	    \varphi_+(u)
	    & = \into G(\nabla u)dx -\into F_+(x,u)dx\\
	    & \geq \frac{c_1}{p(p-1)}\|\nabla u\|^p_{L^p(\Omega)}-\frac{1}{p}\into \eta(x)\left(u^+\right)^p dx -\frac{\eps}{p} \l\|u^+\r\|^p_{L^p(\Omega)}\\
	    & \geq \frac{1}{p}\left(\frac{c_1}{p-1}\|\nabla u\|^p_{L^p(\Omega)}-\into \eta(x)|u|^p dx\right) -\frac{\eps}{p\hat{\lambda}_1(p)} \|\nabla u\|^p_{L^p(\Omega)}\\
	    & \geq \frac{1}{p} \left(\kappa-\frac{\eps}{\hat{\lambda}_1(p)} \right) \|\nabla u\|^p_{L^p(\Omega)}.
	\end{split}
    \end{align}
    Choosing $\eps>0$ small enough such that $\eps<\left(0,\kappa \hat{\lambda}_1(p)\right)$ we see from (\ref{u=0_local_minimizer}) that
    \begin{align*}
	\varphi_+(u)\geq 0=\varphi_+(0) \quad \text{for all } u \in C^1_0(\overline{\Omega}) \text{ with }0\leq \|u\|_{C^1_0(\overline{\Omega})} \leq \delta.
    \end{align*}
    This implies that $u=0$ is a local $C^1_0(\overline{\Omega})$-minimizer of $\varphi_+$. Invoking Proposition \ref{proposition_local_minimizers} yields that $u=0$ is a local $W^{1,p}_0(\Omega)$-minimizer of $\varphi_+$ as well.
\end{proof}

It is easy to see that the critical points of $\varphi_+$ (resp. of $\varphi_-$) are positive (resp. negative). Therefore, we may assume that $u=0$ is an isolated critical point of the functionals $\varphi_{\pm}$, otherwise there would exist a sequence of distinct positive, resp. negative, solutions of \eqref{problem}. 

Consequently, we find small numbers $\varrho_{\pm} \in (0,1)$ such that
\begin{align}\label{eq_0}
    \inf \left\{\varphi_{\pm}(u):\|u\|_{W^{1,p}_0(\Omega)}=\varrho_{\pm}\right\}=:m_{\pm}>0=\varphi_{\pm}(0)
\end{align}
(see Aizicovici-Papageorgiou-Staicu \cite[Proof of Proposition 29]{2008-Aizicovici-Papageorgiou-Staicu}).

Now we are going to prove the existence of two constant sign solutions of problem \eqref{problem}.

\begin{proposition}\label{proposition_constant_sign_solutions}
    Under the assumptions H(a)$_1$ and H(f)$_1$ problem (\ref{problem}) possesses at least two constant sign solutions $u_0 \in \ints \left(C^1_0(\overline{\Omega})_+\right)$ and $v_0 \in - \ints \left(C^1_0(\overline{\Omega})_+\right)$.
\end{proposition}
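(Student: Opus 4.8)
The plan is to obtain $u_0$ and $v_0$ as mountain pass critical points of the truncated functionals $\varphi_+$ and $\varphi_-$ respectively; I describe the argument for $\varphi_+$, the one for $\varphi_-$ being entirely symmetric. \textbf{Step 1 (mountain pass geometry).} By Proposition \ref{proposition_C_condition} the functional $\varphi_+$ satisfies the $C$-condition, and \eqref{eq_0} already provides
\[
\inf\{\varphi_+(u):\|u\|_{W^{1,p}_0(\Omega)}=\varrho_+\}=m_+>0=\varphi_+(0).
\]
It remains to exhibit a point far from the origin with energy below $m_+$. Fix $u\in\interior$, for instance $u=\hat{u}_1(p)$. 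Using hypotheses H(f)$_1$(i), (ii): for every $M>0$ there is $\delta_M>0$ with $F(x,s)\geq M|s|^p$ for a.a. $x\in\Omega$ and all $|s|\geq\delta_M$, while H(f)$_1$(i) bounds $F(x,\cdot)$ from below uniformly on $[-\delta_M,\delta_M]$; hence $F(x,s)\geq M|s|^p-C_M$ for a.a. $x\in\Omega$ and all $s\in\R$, with a constant $C_M>0$. Combining this with $G(\xi)\leq c_5(1+\|\xi\|^p)$ from Corollary \ref{corollary_upper_lower_estimates}, for $t>0$ one gets
\[
\varphi_+(tu)=\into G(t\nabla u)\,dx-\into F(x,tu)\,dx\leq (c_5+C_M)|\Omega|_N+\big(c_5\|\nabla u\|_{L^p(\Omega)}^p-M\|u\|_{L^p(\Omega)}^p\big)t^p.
\]
Choosing $M>c_5\|\nabla u\|_{L^p(\Omega)}^p/\|u\|_{L^p(\Omega)}^p$ forces $\varphi_+(tu)\to-\infty$ as $t\to+\infty$, so we may fix $t_0>0$ large enough that $e:=t_0u$ satisfies $\|e\|_{W^{1,p}_0(\Omega)}>\varrho_+$ and $\varphi_+(e)<0$.

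\textbf{Step 2 (a nontrivial constant sign solution).} Theorem \ref{theorem_mountain_pass} with $u_1=0$ and $u_2=e$ yields $u_0\in W^{1,p}_0(\Omega)$ with $\varphi_+'(u_0)=0$ and $\varphi_+(u_0)\geq m_+>0=\varphi_+(0)$, so $u_0\neq0$. The equation $\varphi_+'(u_0)=0$ reads $\langle A(u_0),v\rangle=\into f_+(x,u_0)v\,dx$ for all $v\in W^{1,p}_0(\Omega)$. Testing with $v=-u_0^-$, using $f_+(x,u_0)=f(x,u_0^+)$ and Lemma \ref{lemma_properties}(c), gives $\tfrac{c_1}{p-1}\|\nabla u_0^-\|_{L^p(\Omega)}^p\leq0$, hence $u_0\geq0$. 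Then $f_+(x,u_0)=f(x,u_0)$, so $u_0$ is a nontrivial nonnegative weak solution of \eqref{problem}, and the nonlinear regularity theory (the $C^{1,\beta}$ estimates underlying Proposition \ref{proposition_local_minimizers}) gives $u_0\in C^1_0(\overline{\Omega})_+\setminus\{0\}$.

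\textbf{Step 3 (interior of the cone).} Put $\varrho=\|u_0\|_{L^\infty(\Omega)}$ and take $\kappa_\varrho>0$ from H(f)$_1$(v). Since $0\leq u_0\leq\varrho$, we obtain $f(x,u_0(x))\geq-\kappa_\varrho u_0(x)^{p-1}$ for a.a. $x\in\Omega$, whence $\divergenz a(\nabla u_0)=-f(x,u_0)\leq\kappa_\varrho u_0^{p-1}$ a.e. in $\Omega$. By Vázquez's strong maximum principle \cite{1984-Vazquez} (applicable in view of the structure conditions H(a)$_1$, cf. Section \ref{section_hypotheses}) it follows that $u_0(x)>0$ for all $x\in\Omega$ and $\frac{\partial u_0}{\partial n}(x)<0$ for all $x\in\partial\Omega$, that is, $u_0\in\interior$. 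Running the same scheme with $\varphi_-$ and testing with $v_0^+$ in Step 2 produces a second constant sign solution $v_0\in-\interior$.

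\textbf{Main obstacle.} The only step requiring care is Step 1: one must verify that the $(p-1)$-superlinear growth from H(f)$_1$(ii) genuinely dominates the leading $t^p$ term produced by the upper bound $G(\xi)\leq c_5(1+\|\xi\|^p)$, so that $\varphi_+$ is unbounded below along the ray $t\mapsto t\hat{u}_1(p)$. Once this is in place, the remaining truncation, regularity, and strong maximum principle arguments are routine.
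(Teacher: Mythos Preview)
Your argument follows the same route as the paper: mountain pass for $\varphi_+$ along the ray $t\hat{u}_1(p)$, then truncation, regularity, and a strong maximum principle. Steps 1 and 2 match the paper essentially verbatim.

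The one point that needs more care is Step 3. You invoke V\'azquez's strong maximum principle, pointing vaguely to ``the structure conditions H(a)$_1$'' and Section \ref{section_hypotheses}. But V\'azquez's original result is for the $p$-Laplacian, and Section \ref{section_hypotheses} only applies it to the eigenfunction $\hat{u}_1(r)$ of $-\Delta_r$, not to the general nonhomogeneous operator $\divergenz a(\nabla\cdot)$. The paper instead uses the Pucci--Serrin strong maximum principle and boundary point theorem, and for this it must verify a structural condition on $a_0$: setting $\gamma(t)=ta_0(t)$, one needs a lower bound on $\int_0^t s\gamma'(s)\,ds$. Integration by parts gives
\[
\int_0^t s\gamma'(s)\,ds = t^2a_0(t)-G_0(t)\geq \hat{\eta}\,t^p,
\]
and it is precisely the second inequality in H(a)$_1$(iv) that supplies this bound. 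You never use H(a)$_1$(iv), so as written your Step 3 does not fully justify $u_0\in\interior$. Incidentally, this is the step that actually merits the label ``main obstacle''; the superlinearity argument in Step 1 is entirely routine.
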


\begin{proof}
    We start with the proof of the existence of the positive solution. Recall that $\hat{u}_1(p) \in \ints \left(C^1_0(\overline{\Omega})_+\right)$ denotes the $L^p$-normalized (i.e. $\|\hat{u}_1(p)\|_{L^p(\Omega)}=1$) eigenfunction corresponding to the first eigenvalue $\hat{\lambda}_1(p)$ of $\left ( -\Delta_p, W^{1,p}_0(\Omega) \right)$. First, we show that
    \begin{align}\label{eq_303}
	\varphi_+(t\hat{u}_1(p))\to -\infty \quad \text{as }t\to+\infty.
    \end{align}

    By means of hypotheses H(f)$_1$(i) and (ii), for every $\eps>0$ there exists a constant $M_{10}=M_{10}(\eps)>0$ such that
    \begin{align}\label{eq_2}
	F(x,s) \geq \eps |s|^p-M_{10} \quad \text{for a.a. }x\in \Omega \text{ and for all }s\in \R.
    \end{align}
    From Corollary \ref{corollary_upper_lower_estimates} and \eqref{eq_2} we obtain for $t>0$
    \begin{align}\label{eq_3}
      \begin{split}
	\varphi_+\left(t \hat{u}_1(p)\right)
	& \leq c_5 |\Omega|_N+ t^p \|\nabla \hat{u}_1(p)\|^p_{\Lp{p}}-\eps t^p+M_{10}|\Omega|_N\\
	& = t^p\left(\hat{\lambda}_1(p)-\eps\right)+(c_5+M_{10})|\Omega|_N.
      \end{split}
    \end{align}
    Choosing $\eps>\hat{\lambda}_1(p)$ in \eqref{eq_3} and letting $t \to +\infty$ implies \eqref{eq_303}.

    Taking into account \eqref{eq_303} and \eqref{eq_0} we find a number $t>0$ large enough such that
    \begin{align}\label{eq_6}
	\varphi_+\l(t\hat{u}_1(p)\r) \leq \varphi_+(0)=0<m_+ \quad \text{and} \quad \varrho_{+}<\l\|t\hat{u}_1(p)\r\|_{W^{1,p}_0(\Omega)}.
    \end{align}
    Thanks to \eqref{eq_0}, \eqref{eq_6} and Proposition \ref{proposition_C_condition} we may apply Theorem \ref{theorem_mountain_pass} (mountain pass theorem) which provides the existence of an element $u_0 \in W^{1,p}_0(\Omega)$ such that
    \begin{align}\label{eq_4}
	\varphi_+(0)=0<m_+\leq \varphi_+(u_0) \quad \text{and} \quad \varphi_+'(u_0)=0.
    \end{align}
    The first relation in \eqref{eq_4} ensures that $u_0 \neq 0$ and the second one results in
    \begin{align}\label{eq_7}
	\left \lan A u_0,v \right\ran = \left\lan N_{f_+}(u_0),v\right \ran\quad \text{for all }v \in W^{1,p}_0(\Omega).
    \end{align}
    Choosing $v=-u_0^-$ as test function in (\ref{eq_7}) gives
    \begin{align}\label{eq_8}
	\into \left(a(\nabla u_0),-\nabla u_0^-\right)_{\R^N} dx=0.
    \end{align}
    Combining (\ref{eq_8}) and Lemma \ref{lemma_properties}(c) we have
    \begin{align*}
	\frac{c_1}{p-1} \l\|\nabla u_0^-\r\|^p_{L^p(\Omega)} \leq 0.
    \end{align*}
    Hence,
    \begin{align*}
	u_0 \geq 0, u_0 \neq 0.
    \end{align*}
    Then, \eqref{eq_7} becomes
    \begin{equation*}
      \begin{aligned}
	-\divergenz a(\nabla u_0(x)) & = f(x,u_0(x)) \quad && \text{in } \Omega,\\
	u & = 0  &&\text{on } \partial \Omega.
      \end{aligned}
    \end{equation*}
    From the nonlinear regularity theory we obtain $u_0 \in \Linf$ (see Ladyzhenskaya-Ural{\cprime}tseva \cite[p. 286]{1968-Ladyzhenskaya-Ural'tseva}) and then $u_0 \in C^{1}_0(\close)$ (see Lieberman \cite{1991-Lieberman}). By means of hypothesis H(f)$_1$(v) we find, for $\varrho=\|u_0\|_{C^(\close)}$, a constant $\kappa_\varrho >0$ such that
    \begin{align*}
	-\divergenz a(\nabla u_0(x))+\kappa_\varrho u_0(x)^{p-1}=f(x,u_0(x))+\kappa_\varrho u_0(x)^{p-1} \geq 0 \quad \text{for a.a. }x\in \Omega.
    \end{align*}
    Hence,
    \begin{align}\label{pucci1}
	\divergenz a(\nabla u_0(x))\leq \kappa_\varrho u_0(x)^{p-1} \quad \text{for a.a. }x\in \Omega.
    \end{align}
    Let $\gamma(t)=ta_0(t)$ for $t>0$. We have
    \begin{align}\label{pucci3}
	t\gamma'(t)=t^2a_0'(t)+ta_0(t).
    \end{align}
    Integration by parts and applying H(a)$_1$(iv) yields
    \begin{align}\label{pucci2}
	\int_0^t s \gamma'(s) ds =t \gamma(t)-\int_0^t \gamma(s)ds=t^2 a_0(t)-G_0(t) \geq \hat{\eta}t^p.
    \end{align}
    Then, due to \eqref{pucci1} and \eqref{pucci2}, we may apply the strong maximum principle of Pucci-Serrin \cite[p. 111]{2007-Pucci-Serrin} which implies that $u_0(x)>0$ for all $x \in \Omega$. In addition, the boundary point theorem of Pucci-Serrin \cite[p. 120]{2007-Pucci-Serrin} yields $u_0 \in \interior$.
    
    Using similar arguments one could easily verify the assertion for the existence of the constant sign solution $v_0 \in -\interior$ working with the functional $\ph_-$ instead of $\ph_+$.
    \end{proof}

    Now, we are interested to find a third nontrivial solutions of (\ref{problem}) via Morse theory. To this end, we will compute certain critical groups of the functionals $\varphi$ and $\varphi_{\pm}$. We start with the computation of the critical groups of $\varphi$ at infinity.

\begin{proposition}\label{proposition_phi_infinity}
    Assume H(a)$_1$ and H(f)$_1$, then $C_k(\varphi,\infty)=0$ for all $k \geq 0$.
\end{proposition}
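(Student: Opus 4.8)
The plan is to show that the sublevel sets of $\varphi$ are, for sufficiently negative levels, contractible (indeed, that $\varphi$ has no critical points below some level and that the whole space $W^{1,p}_0(\Omega)$ deformation retracts onto such a sublevel set). By the definition \eqref{critical_groups_at_infinity} this gives $C_k(\varphi,\infty)=H_k(W^{1,p}_0(\Omega),\varphi^c)=H_k(W^{1,p}_0(\Omega),W^{1,p}_0(\Omega))=0$ for all $k\geq 0$. The standard device for this (going back to Wang and used systematically in the Morse-theoretic literature on superlinear problems) is to work on the unit sphere $S=\{u\in W^{1,p}_0(\Omega):\|u\|_{W^{1,p}_0(\Omega)}=1\}$ and to exploit the superlinearity H(f)$_1$(ii) to show that along each ray $t\mapsto \varphi(tu)$, $u\in S$, the functional eventually stays below any prescribed negative level and is, past a suitable point, strictly decreasing in $t$.

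First I would record the consequence of H(f)$_1$(i) and (ii): for every $\beta>0$ there is $C_\beta>0$ with $F(x,s)\geq \beta|s|^p-C_\beta$ for a.a.\ $x\in\Omega$ and all $s\in\R$. Combined with the upper bound $G(\xi)\leq c_5(1+\|\xi\|^p)$ from Corollary \ref{corollary_upper_lower_estimates}, this gives, for $u\in S$ and $t>0$,
\begin{align*}
    \varphi(tu)\leq c_5|\Omega|_N+c_5 t^p-\beta t^p\|u\|^p_{L^p(\Omega)}+C_\beta|\Omega|_N,
\end{align*}
which is not by itself enough because $\|u\|_{L^p(\Omega)}$ can be small on $S$. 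So instead I would argue as follows. Fix $c<0$ with $c<\inf\varphi(K_\varphi)$ (this infimum is finite: by Proposition \ref{proposition_C_condition} $\varphi$ satisfies the $C$-condition, and one checks $K_\varphi$ is contained in a bounded set via the estimates already used in that proof, or simply notes that $\varphi$ is bounded below on $K_\varphi$ by the standard argument). Claim: for every $u\in S$ there is a unique $t(u)>0$ (or the ray enters $\varphi^c$ and stays) such that $\varphi(tu)\leq c$ for all $t\geq t(u)$; moreover $t(\cdot)$ can be chosen so that $(t,u)\mapsto \varphi(tu)$ is strictly decreasing in $t$ on $\{\varphi(tu)\geq c\}$, which forces $t(u)$ to be unique and continuous by the implicit function theorem. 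The monotonicity is where H(f)$_1$(iii) enters: differentiating, $\frac{d}{dt}\varphi(tu)=\frac1t\langle\varphi'(tu),tu\rangle=\frac1t\big(\langle A(tu),tu\rangle-\int_\Omega f(x,tu)\,tu\,dx\big)$, and using $\langle A(tu),tu\rangle\leq p\varphi(tu)+\int_\Omega\big(pF(x,tu)-f(x,tu)tu\big)dx+c_6|\Omega|_N$ (from $G(\xi)\leq (a(\xi),\xi)_{\R^N}$ and H(a)$_1$(iv)), together with the lower bound $f(x,s)s-pF(x,s)\geq\beta_1|s|^\tau-M_5$ from \eqref{C_cond_8}, one gets that on the set where $\varphi(tu)\geq c$ the derivative $\frac{d}{dt}\varphi(tu)$ is negative for $t$ large; this is essentially the computation already carried out in Proposition \ref{proposition_C_condition}. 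Once the ray crosses level $c$ it cannot come back up.

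With the uniqueness and continuity of the crossing time in hand, the retraction is standard: define $\varrho\colon W^{1,p}_0(\Omega)\setminus\varphi^c\to\{u:\varphi(u)=c\}\subseteq\partial\varphi^c$ by flowing each point radially outward (scaling its norm) until it hits level $c$, and extend to a strong deformation retraction of $W^{1,p}_0(\Omega)$ onto $\varphi^c$ by first retracting radially (outside $\varphi^c$) onto the level set $\{\varphi=c\}$ and then pushing that along itself into $\varphi^c$; continuity of the $t(u)$ map and the $C$-condition make this rigorous in the usual way (cf.\ Bartsch--Li, or Gasi\'nski--Papageorgiou). Hence $\varphi^c$ is a deformation retract of the contractible space $W^{1,p}_0(\Omega)$, so $H_k(W^{1,p}_0(\Omega),\varphi^c)=0$ for all $k$, i.e.\ $C_k(\varphi,\infty)=0$ for all $k\geq0$.

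The main obstacle is making the radial deformation genuinely continuous and well-defined: one must ensure that below level $c$ each ray stays below $c$ (so the hitting time is well-defined and unique) and that the hitting time depends continuously on the direction even though $S$ is infinite-dimensional and $\varphi$ is not homogeneous. This is exactly where the quantitative superlinearity — H(f)$_1$(ii) for the "eventually goes to $-\infty$" part and H(f)$_1$(iii) together with H(a)$_1$(iv) for the "eventually strictly decreasing along rays" part — must be combined carefully; the estimates needed are precisely those already assembled in the proof of Proposition \ref{proposition_C_condition}, so the work is in organizing them into the monotonicity statement rather than in proving anything new.
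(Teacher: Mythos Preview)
Your approach is essentially the paper's: both establish the key inequality
\[
\frac{d}{dt}\varphi(tu)\leq\frac{1}{t}\bigl[p\,\varphi(tu)+M\bigr]
\]
from H(a)$_1$(iv) and H(f)$_1$(iii), deduce that once $\varphi(tu)<-M/p$ the map $t\mapsto\varphi(tu)$ is strictly decreasing (so the level $c$ must be chosen below $-M/p$, not merely below $\inf\varphi(K_\varphi)$), and then use the resulting unique, continuous crossing time to build a radial deformation.

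However, your concluding step has a genuine gap. You claim a deformation retraction of all of $W^{1,p}_0(\Omega)$ onto $\varphi^c$ by radial scaling, but the origin $0$ lies in $W^{1,p}_0(\Omega)\setminus\varphi^c$ (since $\varphi(0)=0>c$) and cannot be moved by scaling its norm. Thus your map $\varrho$ is undefined at $0$, and there is no way to extend the radial flow continuously across the origin. The paper handles this by working instead in $W^{1,p}_0(\Omega)\setminus\{0\}$: it shows that the sublevel set $\varphi^{\varrho_*}$ is a deformation retract of $W^{1,p}_0(\Omega)\setminus\{0\}$, that the unit sphere $\partial B_1$ is also a deformation retract of $W^{1,p}_0(\Omega)\setminus\{0\}$, hence the two are homotopy equivalent, and then invokes the fact that the unit sphere of an \emph{infinite-dimensional} Banach space is contractible to conclude $H_k\bigl(W^{1,p}_0(\Omega),\varphi^{\varrho_*}\bigr)=H_k\bigl(W^{1,p}_0(\Omega),\partial B_1\bigr)=0$. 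Infinite-dimensionality is exactly what circumvents the obstruction at $0$; in finite dimensions $\partial B_1$ would carry nontrivial homology and the argument would fail.
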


\begin{proof}
    By means of H(f)$_1$(i) and (ii), for every $\eps>0$, there exists a constant $M_{11}>0$ such that
    \begin{align}\label{6}
	F(x,s) \geq \eps|s|^p-M_{11} \quad \text{for a.a. }x \in \Omega \text{ and for all } s\in \R.
    \end{align}
    By virtue of Corollary \ref{corollary_upper_lower_estimates} and (\ref{6}) there holds for $u \in \W1p0 \setminus \{0\}$ and for every $t>0$
    \begin{align*}
	\begin{split}
	    \varphi(tu)
	    & = \into G(t \nabla u) dx-\into F(x,tu)dx\\
	    & \leq c_5 \left (|\Omega|_N+t^p\|\nabla u\|^p_{\Lp{p}} \right)-\eps t^p\| u\|_{L^p(\Omega)}^p+M_{11}|\Omega|_N\\
	    & = t^p \left(c_5\|\nabla u\|^p_{\Lp{p}} -\eps\| u\|_{L^p(\Omega)}^p \right)+M_{12},
	\end{split}
    \end{align*}
    with $M_{12}=\left(c_5 +M_{11} \right) |\Omega_N| $. Choosing $\eps>\frac{c_5 \|\nabla u\|^p_{\Lp{p}}}{\| u\|^p_{L^p(\Omega)}}$ implies that
    \begin{align}\label{9}
	\begin{split}
	    \varphi(tu) \to -\infty \quad \text{as }t \to +\infty.
	\end{split}
    \end{align}
    Thanks to the hypotheses H(f)$_1$(i) and (iii), there is a number $\beta_2 \in (0,\beta_0)$ and a constant $M_{13}>0$ such that
    \begin{align}\label{8}
	pF(x,s)-f(x,s)s \leq M_{13}-\beta_2 |s|^\tau  \quad \text{for a.a. }x\in \Omega \text{ and for all }s \in \R.
    \end{align}
    Taking into account hypothesis H(a)$_1$(iv) and \eqref{8} we obtain
    \begin{align}\label{10}
	\begin{split}
	    \frac{d}{dt}\varphi(tu)
	    & = \lan \varphi'(tu),u\ran\\
	    & = \frac{1}{t} \lan \varphi'(tu),tu\ran\\
	    & = \frac{1}{t} \left[\into (a(t\nabla u),t\nabla u)_{\R^N}dx-\into f(x,tu)tudx \right]\\
	    & \leq \frac{1}{t}\left[ \into p G(t \nabla u) dx +(c_6+M_{13})|\Omega|_N-\into pF(x,tu)dx\right]\\
	    & = \frac{1}{t}\left[p \varphi(tu) + M_{14} \right]
	\end{split}
    \end{align}
    with $M_{14}=(c_6+M_{13})|\Omega|_N$. Combining \eqref{9} and \eqref{10} we conclude that
    \begin{align*}
	\frac{d}{dt}\varphi(tu)<0 \quad \text{ for }t>0 \text{ sufficiently large}.
    \end{align*}
    Therefore, for every $u \in \partial B_1=\l\{y \in \W1p0: \|y\|_{\W1p0}=1\r\}$, there exists an unique $\psi(u)>0$ such that
    \begin{align*}
	\varphi(\psi(u)u)=\varrho_*< - \frac{M_{14}}{p} 
    \end{align*}
    (see \eqref{10}). Moreover, the implicit function theorem implies that $\psi \in C(\partial B_1)$.
    
    Now we extend $\psi$ on $\W1p0 \setminus \{0\}$ by setting
    \begin{align*}
	\tilde{\psi}(u)=\frac{1}{\|u\|_{W^{1,p}_0(\Omega)}}\psi\left(\frac{u}{\|u\|_{W^{1,p}_0(\Omega)}}\right) \quad \text{for all }u \in \W1p0 \setminus \{0\}.
    \end{align*}
    It is clear that $\tilde{\psi} \in C\left(W^{1,p}_0(\Omega)\setminus \{0\}\right)$ and $\varphi\left(\tilde{\psi}(u)u\right)=\varrho_*$ for all $u \in \W1p0 \setminus \{0\}$. 
    Note that $\varphi(u)=\varrho_*$ implies $\psi(u)=1$. Then, putting
    \begin{align}\label{12}
	\hat{\psi}(u)=
	\begin{cases}
	    1 \quad & \text{if } \varphi(u) \leq \varrho_*,\\
	    \tilde{\psi}(u) & \text{if } \varphi(u)>\varrho_*,
	\end{cases}
    \end{align}
    we have $\hat{\psi} \in C\left(W^{1,p}_0(\Omega)\setminus \{0\}\right)$. 
    
    Next, we introduce the deformation $h : [0,1] \times W^{1,p}_0(\Omega)\setminus \{0\} \to W^{1,p}_0(\Omega)\setminus \{0\}$ defined by
    \begin{align*}
	h(t,u)=(1-t)u+t\hat{\psi}(u)u.
    \end{align*}
    It is easy to see that $h(0,u)=u$ and $h(1,u)\in\varphi^{\varrho_*}$ for all $u \in W^{1,p}_0(\Omega)\setminus\{0\}$. Moreover, thanks to \eqref{12} there holds
    \begin{align*}
	h(t,\cdot)\big|_{\varphi^{\varrho_*}}=\id\big|_{\varphi^{\varrho_*}} \quad \text{for all }t \in [0,1].
    \end{align*}
    This means that the sublevel set $\varphi^{\varrho_*}$ is a deformation retract of $W^{1,p}_0(\Omega)\setminus \{0\}$. Because of the radial retraction  $u \to \frac{u}{\|u\|_{\W1p0}}$ for all $u \in \W1p0 \setminus\{0\}$ we see that $\partial B_1$ is a retract of $W^{1,p}_0(\Omega)\setminus \{0\}$ while the deformation
    \begin{align*}
	h_0(t,u)=(1-t)u+t\frac{u}{\|u\|_{\W1p0}} \quad \text{for all } (t,u)\in[0,1] \times W^{1,p}_0(\Omega)\setminus \{0\},
    \end{align*}
    points out that $W^{1,p}_0(\Omega)\setminus \{0\}$ is deformable into $\partial B_1$ over $\W1p0$. Then, we may apply Theorem 6.5 of Dugundji \cite[p. 325]{1966-Dugundji} which implies that $\partial B_1$ is a deformation retract of $W^{1,p}_0(\Omega)\setminus \{0\}$.
    We conclude that $\varphi^{\varrho_*}$ and $\partial B_1$ are homotopy equivalent. Hence,
    \begin{align}\label{15}
	H_k\left(W^{1,p}_0(\Omega),\varphi^{\varrho_*}\right)=H_k\left(W^{1,p}_0(\Omega),\partial B_1\right) \quad \text{for all }k\geq 0.
    \end{align}
    Since the space $\W1p0$ is infinite dimensional, it follows that $\partial B_1$ is contractible in itself. Then, from Granas-Dugundji \cite[p. 389]{2003-Granas-Dugundji} we have
    \begin{align*}
	H_k\left(W^{1,p}_0(\Omega),\partial B_1\right)=0 \quad \text{for all }k \geq 0,
    \end{align*}
    which in view of \eqref{15} gives
    \begin{align}\label{13}
	H_k\left(W^{1,p}_0(\Omega),\varphi^{\varrho_*}\right)=0 \quad \text{for all }k \geq 0.
    \end{align}
    Choosing $\varrho_*<-\frac{M_{14}}{p}$ even smaller if necessary, we conclude from \eqref{13} that
    \begin{align*}
	C_k(\varphi,\infty)=0 \quad \text{for all } k\geq 0
    \end{align*}
    (see \eqref{critical_groups_at_infinity}).
\end{proof}

A similar reasoning leads to the following result.

\begin{proposition}\label{proposition_phi_plus_minus__infinity}
    Assume H(a)$_1$ and H(f)$_1$, then
    \begin{align*}
	C_k(\varphi_{\pm},\infty)=0 \quad \text{for all }k \geq 0.
    \end{align*}
\end{proposition}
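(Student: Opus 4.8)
The plan is to follow the proof of Proposition~\ref{proposition_phi_infinity} almost line by line, the one genuinely new point being that the truncation destroys the radial decay of $\varphi_+$ along some directions: if $u^+=0$, then $\varphi_+(tu)=\into G(t\nabla u)\,dx\geq\frac{c_1}{p(p-1)}t^p\|\nabla u\|^p_{L^p(\Omega)}\to+\infty$, so $\varphi_+(tu)\to-\infty$ cannot hold for every $u\neq 0$. I would therefore replace $\W1p0\setminus\{0\}$ throughout by the open set $D_+:=\{u\in\W1p0:u^+\neq 0\}$, whose complement is the closed negative cone, and handle $\varphi_-$ with $D_-:=\{u\in\W1p0:u^-\neq 0\}$.

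First I would check that the two radial facts from Proposition~\ref{proposition_phi_infinity} survive on $D_+$. For $u\in D_+$ we have $\|u^+\|_{L^p(\Omega)}>0$, so, exactly as for \eqref{eq_303}/\eqref{9} (Corollary~\ref{corollary_upper_lower_estimates}, H(f)$_1$(i),(ii), and the choice $\eps>c_5\|\nabla u\|^p_{L^p(\Omega)}/\|u^+\|^p_{L^p(\Omega)}$), one gets $\varphi_+(tu)\to-\infty$ as $t\to+\infty$. The differential inequality \eqref{10} also carries over: since $\into f_+(x,tu)tu\,dx=\int_{\{u>0\}}f(x,tu)tu\,dx$, only the values $s\geq 0$ enter, and there $pF_+(x,s)-f_+(x,s)s=pF(x,s)-f(x,s)s\leq M_{13}$ by H(f)$_1$(i),(iii) (see \eqref{8}), while H(a)$_1$(iv) gives $(a(\xi),\xi)_{\R^N}-pG(\xi)\leq c_6$; hence $\frac{d}{dt}\varphi_+(tu)\leq\frac1t\l(p\varphi_+(tu)+M_{14}\r)$ for all $u\in D_+$, $t>0$, with $M_{14}=(c_6+M_{13})|\Omega|_N$. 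Fixing $\varrho_*<-M_{14}/p$, these two facts give, as in Proposition~\ref{proposition_phi_infinity}, that $t\mapsto\varphi_+(tu)$ is strictly decreasing wherever it equals $\varrho_*$; so it reaches the level $\varrho_*$ at a unique transversal $t=\tau(u)>0$, the implicit function theorem yields $\tau\in C(D_+,(0,\infty))$, and $\varphi_+(tu)\leq\varrho_*$ if and only if $t\geq\tau(u)$.

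Since $\varphi_+\geq 0$ on the negative cone, $\varphi_+^{\varrho_*}\subseteq D_+$ and $\varphi_+^{\varrho_*}=\{u\in D_+:\tau(u)\leq 1\}$. Mimicking Proposition~\ref{proposition_phi_infinity} (with $\psi$, $\tilde\psi$, $\hat\psi$ now living on $\partial B_1\cap D_+$, $D_+$, $D_+$), the map $h(t,u)=(1-t)u+t\hat\psi(u)u$ is a strong deformation retraction of $D_+$ onto $\varphi_+^{\varrho_*}$ — the scalar coefficient is always $\geq 1>0$, so $h$ stays in $D_+$ — whence $H_k(\W1p0,\varphi_+^{\varrho_*})=H_k(\W1p0,D_+)$ for all $k\geq 0$. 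Finally, $D_+$ is contractible: the homotopy $(s,u)\mapsto(1-s)u+s\hat{u}_1(p)$ sends $D_+$ into $D_+$, because on the positive-measure set $\{u>0\}$ the convex combination is strictly positive (here one uses $\hat{u}_1(p)\in\interior$), so its positive part does not vanish, and it contracts $D_+$ to $\hat{u}_1(p)$. As both $\W1p0$ and $D_+$ are contractible, the long exact sequence of the pair forces $H_k(\W1p0,D_+)=0$ for all $k\geq 0$, so $C_k(\varphi_+,\infty)=0$ for all $k\geq 0$ by \eqref{critical_groups_at_infinity} (shrinking $\varrho_*$ further if necessary so that $\varrho_*<\inf\varphi_+(K_{\varphi_+})$). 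The argument for $\varphi_-$ is verbatim with $D_-$ and $-\hat{u}_1(p)$.

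The only real obstacle compared with Proposition~\ref{proposition_phi_infinity} is this loss of symmetry: one must check that the deep sublevel set actually sits inside the cone complement $D_\pm$ and that $D_\pm$ is itself contractible (replacing the rôle of the infinite-dimensional sphere $\partial B_1$). Once that is done, every estimate used to build the radial deformation is exactly the one from the proof of Proposition~\ref{proposition_phi_infinity}.
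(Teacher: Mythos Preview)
Your proposal is correct and follows essentially the same route as the paper: the paper also replaces $\W1p0\setminus\{0\}$ by $E_+=\{u\in\W1p0:u^+\neq 0\}$ (your $D_+$), proves the radial decay and the differential inequality on $E_+$, retracts $E_+$ onto the deep sublevel set, and shows $E_+$ is contractible via a straight-line homotopy to a fixed point of $E_+$. Your choice of $\hat{u}_1(p)$ as the contraction center, together with the explicit check that the homotopy stays in $D_+$, is in fact a bit more careful than the paper, which fixes an arbitrary $u_0\in E_+$ without verifying that $(1-t)u+tu_0$ remains in $E_+$ (a condition that requires $u_0>0$ in $\Omega$, since $E_+$ is not convex).
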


\begin{proof}
    We do the proof only for the functional $\ph_+$, the assertion for $\ph_-$ can be done similarly.
    Let $\partial B_1^+:=\l\{u \in \partial B_1: u^+\neq 0\r\}$ and $t>0$. As in the proof of Proposition \ref{proposition_phi_infinity} we can show that for all $u \in \partial B_1^+$ there holds
    \begin{align}\label{19}
	\varphi_+(tu) \to -\infty \quad \text{as }t \to +\infty.
    \end{align}
    Taking into account H(a)$_1$(iv) and \eqref{8} yields, for all $u \in \partial B_1^+$,
    \begin{align}\label{20}
	\begin{split}
	    \frac{d}{dt}\varphi_+(tu)
	    & = \lan \varphi_+'(tu),u\ran\\
	    & = \frac{1}{t} \lan \varphi_+'(tu),tu\ran\\
	    & = \frac{1}{t} \left[\into (a(t\nabla u),t\nabla u)_{\R^N}dx-\into f_+(x,tu)tudx \right]\\
	    & \leq \frac{1}{t}\left[ \into p G(t\nabla u) dx + (c_6+M_{15}) |\Omega|_N -\into pF(x,tu^+)dx\right]\\
	    & \leq \frac{1}{t}\left[ p \ph_+(tu)+ M_{16}\right]
	\end{split}
    \end{align}
    where $M_{16}=(c_6+M_{15})|\Omega|_N$ and $M_{15}>0$. Regarding \eqref{19} and \eqref{20}, we conclude that
    \begin{align*}
	\frac{d}{dt}\varphi_+(tu)<0 \quad \text{for all }t>0 \text{ sufficiently large}.
    \end{align*}
    As before, for every $u \in \partial B_1^+$, we find an unique $\psi_+(u)>0$ such that $\varphi_+(\psi_+(u)u)=\varrho_*^+<-\frac{M_{16}}{p}$ and the implicit function theorem implies that $\psi_+ \in C(\partial B_1^+)$.

    Let $E_+=\{u \in W^{1,p}_0(\Omega): u^+ \neq 0\}$ and set for all $u\in E_+$
    \begin{align*}
	\tilde{\psi}_+(u)=\frac{1}{\|u\|_{W^{1,p}_0(\Omega)}}\psi_+\left(\frac{u}{\|u\|_{W^{1,p}_0(\Omega)}}\right).
    \end{align*}
    Obviously, $\tilde{\psi}_+ \in C\left(E_+\right)$ and $\varphi_+\left(\tilde{\psi}_+(u)u\right)=\varrho_*^+$. Moreover, if $\varphi_+(u)=\varrho_*^+$, then $\widetilde{\psi}_+(u)=1$.
    Hence,
    \begin{align}\label{22}
	\hat{\psi}_{+}(u):=
	\begin{cases}
	    1 \quad & \text{if } \varphi_+(u) \leq \varrho_*^+,\\
	    \tilde{\psi}_+(u) & \text{if } \varphi_+(u)>\varrho_*^+,
	\end{cases}
    \end{align}
    belongs to $C\left(E_+\right)$.

    Consider the deformation $h_+ : [0,1] \times E_+ \to E_+$ defined by
    \begin{align*}
	h_+(t,u)=(1-t)u+t\hat{\psi}_{+}(u)u.
    \end{align*}
    We see at once that $h_+(0,u)=u$, $h_+(1,u) \in \left(\varphi_+\right)^{\varrho_*^+}$ for all $u \in E_+$, and
    \begin{align*}
	h_+(t,\cdot)\big|_{\left(\varphi_+\right)^{\varrho_*^+}}=\id\big|_{\left(\varphi_+\right)^{\varrho_*^+}} \quad \text{for all }t \in [0,1]
    \end{align*}
    (cf. (\ref{22})). Consequently, $\left(\varphi_+\right)^{\varrho_*^+}$ is a strong deformation retract of $E_+$.

    Let us consider the deformation $\hat{h}_+: [0,1]\times E_+ \to E_+$ defined by
    \begin{align*}
	\hat{h}_+(t,u)=(1-t)u+tu_0,
    \end{align*}
    where $u_0 \in E_+$ is fixed. Then, $\hat{h}_+(0,u)=u$ and $\hat{h}_+(1,u)=u_0$ which means that $\id_{E_+}$ is homotopic to the constant map $u \mapsto u_0$. Thus, $E_+$ is contractible to itself (see Bredon \cite[Proposition 14.5]{1997-Bredon}) and from Granas-Dugundji \cite[p. 389]{2003-Granas-Dugundji}, it follows
    \begin{align*}
	H_k\left(W^{1,p}_0(\Omega),E_+\right)=0 \quad \text{for all }k \geq 0.
    \end{align*}
    Then we infer
    \begin{align}\label{25}
	H_k\left(W^{1,p}_0(\Omega),\left(\varphi_+\right)^{\varrho_*^+}\right)=0 \quad \text{for all }k \geq 0.
    \end{align}
    As before, we choose $\varrho_*^+<- \frac{M_{16}}{p}$ sufficiently small. Thus, \eqref{25} implies
    \begin{align*}
	C_k\left(\varphi_+,\infty\right)=0 \quad \text{for all }k \geq 0.
    \end{align*}
    This yields the assertion of the proposition.
\end{proof}

Recall that $u_0 \in \ints \left(C^1_0(\overline{\Omega})_+\right)$ and $v_0 \in -\ints \left(C^1_0(\overline{\Omega})_+\right)$ are the constant sign solutions of (\ref{problem}) obtained in Proposition \ref{proposition_local_minimizers}. 
We may assume that $K_{\ph}=\{0,u_0,v_0\}$, otherwise we would find another nontrivial solution of (\ref{problem}) which would belong to $C^{1}_0(\Omega)$ as a consequence of the nonlinear regularity theory (see Ladyzhenskaya-Ural{\cprime}tseva \cite{1968-Ladyzhenskaya-Ural'tseva}) and Lieberman \cite{1991-Lieberman}) and therefore we would have done.

Note that $K_{\ph}=\{0,u_0,v_0\}$ ensures that $K_{\ph_+}=\{0,u_0\}$ and $K_{\ph_-}=\{0,v_0\}$.

\begin{proposition}\label{proposition_phi_plus_minus_u0_v0}
    Assume H(a)$_1$ and H(f)$_1$, then
    \begin{align*}
	C_k(\varphi_{+},u_0)=C_k(\varphi_{-},v_0)=\delta_{k,1}\Z \quad \text{for all }k \geq 0.
    \end{align*}
 \end{proposition}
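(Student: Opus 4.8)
The plan is to establish $C_k(\varphi_+,u_0)=\delta_{k,1}\Z$ for all $k\ge0$; the equality $C_k(\varphi_-,v_0)=\delta_{k,1}\Z$ will then follow by the verbatim argument applied to $\varphi_-$ and $v_0\in-\interior$. First I would collect what is already at hand: $\varphi_+$ satisfies the $C$-condition (Proposition~\ref{proposition_C_condition}); $u=0$ is an isolated local minimizer of $\varphi_+$ with $\varphi_+(0)=0$ (Proposition~\ref{proposition_phi_phi_plus_minus_zero} and the standing assumption $K_{\varphi_+}=\{0,u_0\}$), hence $C_k(\varphi_+,0)=\delta_{k,0}\Z$; and $u_0\in\interior$ is the mountain pass solution of Proposition~\ref{proposition_constant_sign_solutions}, so $c_+:=\varphi_+(u_0)\ge m_+>0$ by \eqref{eq_0} and \eqref{eq_4}. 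Thus the only critical values of $\varphi_+$ are $0$ and $c_+$, and $u_0$ is the unique critical point at level $c_+$; consequently, for small $\varepsilon\in(0,c_+)$ one has $C_k(\varphi_+,u_0)=H_k\bigl((\varphi_+)^{c_++\varepsilon},(\varphi_+)^{c_+-\varepsilon}\bigr)$, and the task reduces to pinning down the homotopy types of these two sublevel sets.

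Since $\varphi_+$ has no critical values above $c_+$, the second deformation theorem shows that $(\varphi_+)^{c_++\varepsilon}$ is a strong deformation retract of $\W1p0$, hence contractible, so $\tilde H_j\bigl((\varphi_+)^{c_++\varepsilon}\bigr)=0$ for all $j$. For the lower set, fix $\varrho_*^+<0$ sufficiently negative as in the proof of Proposition~\ref{proposition_phi_plus_minus__infinity}; that proof already exhibits $(\varphi_+)^{\varrho_*^+}$ as a strong deformation retract of the contractible set $E_+=\{u\in\W1p0:u^+\neq0\}$, so $(\varphi_+)^{\varrho_*^+}$ is contractible. Now $0$ is the only critical value of $\varphi_+$ in $(\varrho_*^+,c_+-\varepsilon)$ and $K_{\varphi_+}^{0}=\{0\}$, so the Morse-theoretic ``passing of a critical level'' identity gives $H_k\bigl((\varphi_+)^{c_+-\varepsilon},(\varphi_+)^{\varrho_*^+}\bigr)\cong C_k(\varphi_+,0)=\delta_{k,0}\Z$; the long exact sequence of this pair, together with the contractibility of $(\varphi_+)^{\varrho_*^+}$, then yields $H_0\bigl((\varphi_+)^{c_+-\varepsilon}\bigr)=\Z^2$ and $H_k\bigl((\varphi_+)^{c_+-\varepsilon}\bigr)=0$ for $k\ge1$, i.e. $\tilde H_j\bigl((\varphi_+)^{c_+-\varepsilon}\bigr)=\delta_{j,0}\Z$ for all $j$. (Geometrically, $(\varphi_+)^{c_+-\varepsilon}$ is the disjoint union of the component of the local minimizer $0$ and the ``unbounded below'' component containing $(\varphi_+)^{\varrho_*^+}$, each acyclic.)

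Feeding $\tilde H_j\bigl((\varphi_+)^{c_++\varepsilon}\bigr)=0$ and $\tilde H_j\bigl((\varphi_+)^{c_+-\varepsilon}\bigr)=\delta_{j,0}\Z$ into the reduced long exact homology sequence of the pair $\bigl((\varphi_+)^{c_++\varepsilon},(\varphi_+)^{c_+-\varepsilon}\bigr)$ forces $C_k(\varphi_+,u_0)=H_k\bigl((\varphi_+)^{c_++\varepsilon},(\varphi_+)^{c_+-\varepsilon}\bigr)\cong\tilde H_{k-1}\bigl((\varphi_+)^{c_+-\varepsilon}\bigr)=\delta_{k,1}\Z$, which is the assertion. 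Repeating the same chain of deformation and homology arguments for $\varphi_-$ — using that the corresponding deep sublevel set of $\varphi_-$ is contractible, again by the proof of Proposition~\ref{proposition_phi_plus_minus__infinity} — delivers $C_k(\varphi_-,v_0)=\delta_{k,1}\Z$, finishing the proof.

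The step I expect to be delicate is the ``passing of the level $0$'': one must secure that $(\varphi_+)^{\varrho_*^+}$ is genuinely contractible — this is precisely what the explicit deformation constructed in the proof of Proposition~\ref{proposition_phi_plus_minus__infinity} provides — and that the handle-attachment identification $H_k\bigl((\varphi_+)^{c_+-\varepsilon},(\varphi_+)^{\varrho_*^+}\bigr)\cong C_k(\varphi_+,0)$ is legitimate, which hinges on $0$ being isolated in $K_{\varphi_+}$, i.e. on the standing assumption. Everything else is routine homological bookkeeping with the two long exact sequences. As an alternative one could bypass the sublevel-set analysis entirely and invoke the standard fact that a critical point of mountain pass type which is the unique critical point at its (strictly positive) level and lies above a local minimizer has critical groups $\delta_{k,1}\Z$; applied to $u_0$ and to $v_0$ this gives the conclusion directly.
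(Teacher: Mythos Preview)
Your proof is correct and rests on exactly the same ingredients as the paper's: the $C$-condition (Proposition~\ref{proposition_C_condition}), $C_k(\varphi_+,0)=\delta_{k,0}\Z$ (Proposition~\ref{proposition_phi_phi_plus_minus_zero}), and $C_k(\varphi_+,\infty)=0$ (Proposition~\ref{proposition_phi_plus_minus__infinity}), fed through long exact homology sequences of nested sublevel sets. The only difference is organizational. The paper works with a single triple $\left(\varphi_+\right)^{\varsigma_1}\subset\left(\varphi_+\right)^{\varsigma_2}\subset W^{1,p}_0(\Omega)$ (with $\varsigma_1<0<\varsigma_2<m_+$), reads off $C_k(\varphi_+,u_0)=0$ for $k\neq1$ directly from the long exact sequence of the triple, bounds $\rank C_1(\varphi_+,u_0)\le1$ via the rank theorem, and then invokes that $u_0$ is of mountain-pass type to get $C_1\neq0$. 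You instead split the computation into two pair sequences and pass through the absolute (reduced) homology of the intermediate sublevel set $(\varphi_+)^{c_+-\varepsilon}$; this packaging is slightly longer but has the advantage of pinning down $C_1(\varphi_+,u_0)\cong\Z$ directly from the homological algebra, without needing the mountain-pass input as a separate step. Either route is fine.
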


\begin{proof}
    We only compute $C_k(\varphi_{+},u_0)$, the computation of $C_k(\varphi_{-},v_0)$ is done in a similar way.
    Let $\varsigma_1, \varsigma_2 \in \R$ be two numbers such that
    \begin{align}\label{32}
	\varsigma_1<0=\varphi_{+}(0)<\varsigma_2<m_{+}\leq \varphi_{+}(u_0)
    \end{align}
    (see \eqref{eq_0} and \eqref{eq_4}) and consider the following triple of sets
    \begin{align*}
	\left(\varphi_{+}\right)^{\varsigma_1} \subseteq \left(\varphi_{+}\right)^{\varsigma_2}\subseteq W^{1,p}_0(\Omega).
    \end{align*}
    Concerning this triple of sets we study the corresponding long exact sequence of homology groups which is given by
    \begin{align}\label{33}
	\begin{split}
	& \ldots \longrightarrow H_k \left( W^{1,p}_0(\Omega), \left(\varphi_{+}\right)^{\varsigma_1} \right) \overset{i_*}{\longrightarrow} H_k \left( W^{1,p}_0(\Omega),\left(\varphi_{+}\right)^{\varsigma_2} \right)\\
	&\qquad \qquad \overset{\partial_*}{\longrightarrow} H_{k-1} \left(\left(\varphi_{+}\right)^{\varsigma_2},\left(\varphi_{+}\right)^{\varsigma_1} \right) \longrightarrow \ldots,
	\end{split}
    \end{align}
    where $i_*$ denotes the group homomorphism induced by the inclusion mapping $i:\left(\varphi_{+}\right)^{\varsigma_1} \to \left(\varphi_{+} \right)^{\varsigma_2}$
    and $\partial_*$ stands for the boundary homomorphism.      
    Recall that $K_{\ph_+}=\{0,u_0\}$ and thanks to \eqref{32} as well as Proposition \ref{proposition_phi_plus_minus__infinity} it follows
    \begin{align}\label{34}
	H_k \left(W^{1,p}_0(\Omega),\left(\varphi_{+}\right)^{\varsigma_1} \right)=C_k\left(\varphi_{+},\infty\right)=0 \quad \text{for all }k \geq 0.
    \end{align}
    Furthermore, from Chang \cite[p. 338]{2005-Chang}, \eqref{32}, and Proposition \ref{proposition_phi_phi_plus_minus_zero} we have
    \begin{align}\label{35}
	H_k \left(W^{1,p}_0(\Omega),\left(\varphi_{+}\right)^{\varsigma_2} \right)=C_k\left(\varphi_{+},u_0\right) \quad \text{for all }k \geq 0
    \end{align}
    and
    \begin{align}\label{36}
	H_{k-1} \left(\left(\varphi_{+}\right)^{\varsigma_2},\left(\varphi_{+}\right)^{\varsigma_1} \right)=C_{k-1}\left(\varphi_{+},0\right)=\delta_{k,1}\Z \quad \text{for all }k \geq 0.
    \end{align}
    Taking into account (\ref{34}) and (\ref{36}) one observes that only the tail $k=1$ in (\ref{33}) is nontrivial. Applying the rank theorem yields
    \begin{align*}
	\rank H_1\left(\W1p0, \left(\ph_+\right)^{\varsigma_2} \right)=\rank (\ker \partial_*)+\rank (\im \partial_*).
    \end{align*}
    Then from (\ref{33})--(\ref{36}) it follows
    \begin{align}\label{40}
	\begin{split}
	  \rank C_1\left(\varphi_{+},u_0\right)
	  & =\rank \left(\ker \partial_* \right)+\rank \left(\im \partial_*\right)\\
	  & =\rank \left(\im i_*\right)+\rank \left(\im \partial_*\right)\\
	  & \leq 0+1.
	\end{split}
    \end{align}
    However, the proof of Proposition \ref{proposition_constant_sign_solutions} has shown that $u_0 \in \ints \left(C^1_0(\overline{\Omega})_+\right)$ is a critical point of $\ph_+$ of mountain pass type. Thus,
    \begin{align}\label{41}
	C_1\left(\varphi_{+},u_0\right) \neq 0.
    \end{align}
    Combining \eqref{40} and \eqref{41} yields
    \begin{align*}
	C_k\left(\varphi_{+},u_0\right)=\delta_{k,1}\Z \quad \text{for all } k\geq 0.
    \end{align*}
\end{proof}

With the aid of Proposition \ref{proposition_phi_plus_minus_u0_v0} we are now in the position to compute the critical groups of $\varphi$ at $u_0$ and $v_0$.

\begin{proposition}\label{proposition_phi_u0_v0}
    Assume H(a)$_1$ and H(f)$_1$, then
    \begin{align*}
	C_k(\varphi,u_0)=C_k(\varphi,v_0)=\delta_{k,1}\Z \quad \text{for all }k \geq 0.
    \end{align*}
\end{proposition}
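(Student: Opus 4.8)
The plan is to reduce the computation of $C_k(\varphi, u_0)$ to the already-known computation of $C_k(\varphi_+, u_0)$ from Proposition \ref{proposition_phi_plus_minus_u0_v0}, and symmetrically $C_k(\varphi, v_0)$ to $C_k(\varphi_-, v_0)$. The underlying reason this works is that $u_0 \in \interior$ is strictly positive, so in a small $C^1_0(\overline{\Omega})$-neighborhood of $u_0$ the functionals $\varphi$ and $\varphi_+$ coincide: if $u$ is $C^1$-close to $u_0$ then $u > 0$ on $\overline{\Omega}$ in the interior sense (more precisely $u \geq 0$), hence $u^+ = u$ and $F_+(x,u) = F(x,u)$, so that $\varphi_+(u) = \varphi(u)$ on that neighborhood. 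Since critical groups are local objects defined via an arbitrarily small neighborhood $U$ of the critical point, this gives $C_k(\varphi, u_0) = C_k(\varphi_+, u_0)$ for all $k \geq 0$, and then Proposition \ref{proposition_phi_plus_minus_u0_v0} finishes the positive case; the negative case is identical with $v_0 \in -\interior$.

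The one genuine subtlety is that critical groups are a priori defined using the $W^{1,p}_0(\Omega)$-topology, whereas the coincidence $\varphi = \varphi_+$ holds on a $C^1_0(\overline{\Omega})$-neighborhood of $u_0$, which need not contain a $W^{1,p}_0$-ball. So the first thing I would do is invoke the standard fact that the critical groups of a $C^1$-functional satisfying the $C$-condition at an isolated critical point can equivalently be computed with respect to the $C^1_0(\overline{\Omega})$-topology. Concretely: $u_0$ is a local minimizer-type critical point situation is not what we have, but the relevant tool is the result (used repeatedly in this circle of papers, going back to Palais and exploited e.g. in Jin-Papageorgiou-type arguments) that says $H_k(\varphi^c \cap U, \varphi^c \cap U \setminus \{u_0\})$ is the same whether $U$ is taken as a $W^{1,p}_0$-neighborhood or a $C^1_0(\overline{\Omega})$-neighborhood, because $C^1_0(\overline{\Omega})$ is densely and continuously embedded in $W^{1,p}_0(\Omega)$ and both functionals restrict continuously. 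Alternatively one cites the density-of-smooth-functions argument directly, or uses Proposition \ref{proposition_local_minimizers}-style regularity to note that all nearby critical points are $C^1$ and apply the excision/homotopy-invariance argument in $C^1_0(\overline{\Omega})$.

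So the key steps, in order, are: (1) recall from Proposition \ref{proposition_constant_sign_solutions} that $u_0 \in \interior$, hence there is $\rho > 0$ such that every $h \in C^1_0(\overline{\Omega})$ with $\|h\|_{C^1_0(\overline{\Omega})} \leq \rho$ satisfies $u_0 + h \geq 0$ on $\overline{\Omega}$; (2) deduce that on this ball $F_+(x, (u_0+h)(x)) = F(x, (u_0+h)(x))$ a.e., hence $\varphi_+(u_0 + h) = \varphi(u_0 + h)$; (3) invoke the equivalence of computing critical groups in the $C^1_0(\overline{\Omega})$- and $W^{1,p}_0(\Omega)$-topologies (citing the appropriate reference, e.g. Chang \cite{2005-Chang} or Gasi{\'n}ski-Papageorgiou \cite{2006-Gasinski-Papageorgiou}) together with the locality of critical groups to conclude $C_k(\varphi, u_0) = C_k(\varphi_+, u_0)$ for all $k \geq 0$; (4) apply Proposition \ref{proposition_phi_plus_minus_u0_v0} to get $C_k(\varphi, u_0) = \delta_{k,1}\Z$; (5) run the mirror-image argument with $\varphi_-$ and $v_0 \in -\interior$ to get $C_k(\varphi, v_0) = \delta_{k,1}\Z$. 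The main obstacle — really the only place requiring care — is step (3): making rigorous the passage between the two topologies, since the naive definition of critical groups is in the Sobolev norm while the coincidence of functionals is only available in the stronger $C^1$ norm; everything else is immediate from the strict positivity of $u_0$ and the truncation construction.
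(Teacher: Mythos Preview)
Your proposal is correct but takes a genuinely different route from the paper's proof. The paper argues via the homotopy invariance of critical groups: it considers the convex homotopy $h(t,u) = t\varphi(u) + (1-t)\varphi_+(u)$, then shows by contradiction (using the nonlinear regularity theory of Ladyzhenskaya--Ural{\cprime}tseva and Lieberman to bootstrap $W^{1,p}_0$-convergence of hypothetical nearby critical points to $C^1_0$-convergence, so that eventually they lie in $\interior$ and hence solve \eqref{problem}, contradicting $K_\varphi = \{0,u_0,v_0\}$) that $u_0$ is an isolated critical point of $h(t,\cdot)$ uniformly in $t \in [0,1]$; homotopy invariance then gives $C_k(\varphi,u_0) = C_k(\varphi_+,u_0)$.

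Your approach instead exploits that $\varphi$ and $\varphi_+$ literally coincide on a $C^1_0(\overline{\Omega})$-neighborhood of $u_0$, and then invokes the Palais-type density result to transfer critical-group computations between the $C^1_0$- and $W^{1,p}_0$-topologies. This is shorter and conceptually cleaner, and indeed the paper itself uses exactly this Palais argument later in Proposition \ref{proposition_Sec5_phi_zero} (citing \cite{1966-Palais}), so the tool is available. The paper's homotopy-invariance route, on the other hand, is more self-contained within standard critical-point machinery and avoids the infinite-dimensional topology input; it also has the mild advantage that it never needs the functionals to agree exactly on any open set, only that their critical sets are controlled along the homotopy. Your identified ``main obstacle'' in step (3) is real but resolvable with the reference already present in the paper.
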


\begin{proof}
    As before, we only compute $C_k(\varphi,u_0)$, the other one works similarly.
    We consider the homotopy $h: [0,1] \times W^{1,p}_0(\Omega) \to W^{1,p}_0(\Omega)$ defined by
    \begin{align*}
	h(t,u)=t\varphi(u)+(1-t)\varphi_+(u).
    \end{align*}
    Recall that $K_\ph=\{0,u_0,v_0\}$. We are going to prove the existence of a number $\rho>0$ such that $u_0$ is the only critical point of $h(t, \cdot)$ in $$B_\rho=\l\{u \in \W1p0:\l\|u-u_0\r\|_{\W1p0}<\rho \r\}$$ for all $t \in [0,1]$. Let us prove this assertion via contradiction and suppose it is not valid. Then we find a sequence $(t_n,u_n)_{n\geq 1} \subseteq [0,1] \times \W1p0$ such that
    \begin{align}\label{43}
	t_n \to t, \text{ in } [0,1], \text{ } u_n \to u_0 \text{ in } W^{1,p}_0(\Omega), \text{ and } h'_u(t_n,u_n)=0 \text{ for all }n\geq 1.
    \end{align}
    Relation (\ref{43}) gives
    \begin{align*}
	\left \lan A( u_n),v\right \ran=t_n\into  f(x,u_n) vdx+(1-t_n)\into f_+(x,u_n) v dx \quad \text{for all }v \in W^{1,p}_0(\Omega),
    \end{align*}
    which means that $u_n$ solves the problem
    \begin{equation}\label{44}
    \begin{aligned}
      -\divergenz a(\nabla u_n(x)) & = t_n f(x,u_n(x))+(1-t_n)f_+(x,u_n(x)) \quad && \text{in } \Omega,\\
       u & = 0  &&\text{on } \partial \Omega.
    \end{aligned}
    \end{equation}
    Because of \eqref{43}, from Ladyzhenskaya-Ural{\cprime}tseva \cite[p. 286]{1968-Ladyzhenskaya-Ural'tseva}, there exists  $M_{17}>0$ such that $\|u_n\|_{\Linf} \leq M_{17}$ for all $n\geq 1$ and due to Lieberman \cite[p. 320]{1991-Lieberman} we find $\beta \in (0,1)$ and $M_{18}>0$ such that
    \begin{align*}
	  \|u_n\|_{C^{1,\alpha}_0(\overline{\Omega})} \leq M_{18} \quad \text{for all }n \geq 1.
    \end{align*}
    Due to the compact embedding $C^{1,\alpha}_0(\overline{\Omega}) \hookrightarrow C^1_0(\overline{\Omega})$, we may assume that $u_n \to u_0$ in $C^1_0(\overline{\Omega})$ for a subsequence if necessary. Recalling $u_0\in \ints \left(C^1_0(\overline{\Omega})_+\right)$ there exists a number $n_0 \geq 1$ such that $\l(u_n\r)_{n \geq n_0} \subseteq \ints \left(C^1_0(\overline{\Omega})_+\right)$. Thus (\ref{44}) reduces to
    \begin{equation*}
    \begin{aligned}
      -\divergenz a(\nabla u_n(x)) & = f(x,u_n(x)) \quad && \text{in } \Omega,\\
       u & = 0  &&\text{on } \partial \Omega.
    \end{aligned}
    \end{equation*}
    Hence, $\l(u_n\r)_{n \geq n_0}$ is a sequence of distinct solutions of \eqref{problem} which contradicts the fact that $K_\ph=\{0,u_0,v_0\}$.
    
    Therefore, we find a number $\rho>0$ such that $h_u'(t, u) \neq  0$ for all $t \in [0, 1]$ and all $u\in B_\rho(u_0) \setminus\{u_0\}$. Similar to the proof of Proposition \ref{proposition_C_condition} one could verify that $h(t,\cdot)$ fulfills the C-condition for every $t \in [0, 1]$. Thus, we can invoke the homotopy invariance of critical groups to get
    \begin{align*}
	C_k(h(0,\cdot),u_0)=C_k(h(1,\cdot),u_0) \quad \text{for all }k \geq 0,
    \end{align*}
    which is equivalent to
    \begin{align*}
	C_k(\ph,u_0)=C_k(\ph_+,u_0) \quad \text{for all }k \geq 0.
    \end{align*}
    Combining this with Proposition \ref{proposition_phi_plus_minus_u0_v0} implies that
    \begin{align*}
	C_k(\varphi,u_0)=C_k(\varphi_+,u_0)=\delta_{k,1}\Z \quad \text{for all }k \geq 0.
    \end{align*}
    Similarly, we show that
    \begin{align*}
	C_k(\varphi,v_0)=\delta_{k,1}\Z \quad \text{for all }k \geq 0.
    \end{align*}
\end{proof}

Now we are ready to produce a third nontrivial solution of problem \eqref{problem}. We have the subsequent multiplicity theorem.

\begin{theorem}\label{theorem_sec3}
    Under hypotheses H(a)$_1$ and H(f)$_1$ problem (\ref{problem}) has at least three nontrivial solutions
    \begin{align*}
	u_0 \in \ints \left(C^1_0(\overline{\Omega})_+\right), \quad v_0 \in -\ints \left(C^1_0(\overline{\Omega})_+\right) \quad \text{and} \quad y_0 \in C^1_0(\overline{\Omega}).
    \end{align*}
\end{theorem}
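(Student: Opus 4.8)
The plan is to obtain $y_0$ by a Morse-theoretic argument: assemble the critical groups computed in the previous propositions and feed them into the Morse relation \eqref{morse_relation} to force a contradiction unless an extra critical point exists. As already noted after Proposition \ref{proposition_phi_plus_minus__infinity}, we may assume $K_\varphi=\{0,u_0,v_0\}$, since otherwise any further element of $K_\varphi$ is a nontrivial solution of \eqref{problem} which, by the nonlinear regularity theory of Ladyzhenskaya--Ural'tseva \cite{1968-Ladyzhenskaya-Ural'tseva} and Lieberman \cite{1991-Lieberman}, belongs to $C^1_0(\overline{\Omega})$, and we are done. Under this assumption $K_\varphi$ is finite, so $\inf\varphi(K_\varphi)>-\infty$ and the critical groups at infinity \eqref{critical_groups_at_infinity} are well defined; moreover $\varphi$ satisfies the $C$-condition by Proposition \ref{proposition_C_condition}, so the Morse relation \eqref{morse_relation} is available.

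Next I would collect the three local contributions. Since $u=0$ is a local minimizer of $\varphi$ by Proposition \ref{proposition_phi_phi_plus_minus_zero} and it is isolated in $K_\varphi$, we have $C_k(\varphi,0)=\delta_{k,0}\Z$, hence $M(t,0)=1$. By Proposition \ref{proposition_phi_u0_v0}, $C_k(\varphi,u_0)=C_k(\varphi,v_0)=\delta_{k,1}\Z$, so $M(t,u_0)=M(t,v_0)=t$. Finally, Proposition \ref{proposition_phi_infinity} gives $C_k(\varphi,\infty)=0$ for all $k\geq 0$, i.e.\ $P(t,\infty)=0$.

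Substituting these data into \eqref{morse_relation} leads to the identity
\begin{align*}
    1+2t=(1+t)Q(t)\quad\text{for all }t\in\R,
\end{align*}
where $Q(t)$ is a formal series with nonnegative integer coefficients. Evaluating at $t=-1$ gives $-1=0$, a contradiction. Therefore $K_\varphi\supsetneq\{0,u_0,v_0\}$, so \eqref{problem} has a third nontrivial solution $y_0\notin\{0,u_0,v_0\}$, and the nonlinear regularity theory yields $y_0\in C^1_0(\overline{\Omega})$, completing the proof.

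There is no genuinely hard step here: all the substantive work — establishing the $C$-condition, the local minimizer property of $0$, and the critical group computations at $u_0$, $v_0$ and at infinity — has already been carried out in the preceding propositions. The only point requiring a little care is to make sure the hypotheses of the Morse relation are in force (finiteness of $K_\varphi$, which we may assume, together with the $C$-condition) and to record that an isolated local minimizer contributes $\delta_{k,0}\Z$ to the critical groups.
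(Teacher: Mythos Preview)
Your proposal is correct and follows essentially the same approach as the paper: assume $K_\varphi=\{0,u_0,v_0\}$, assemble the critical groups $C_k(\varphi,0)=\delta_{k,0}\Z$, $C_k(\varphi,u_0)=C_k(\varphi,v_0)=\delta_{k,1}\Z$, $C_k(\varphi,\infty)=0$ from the preceding propositions, and derive a contradiction from the Morse relation \eqref{morse_relation} at $t=-1$. The paper's proof is identical in substance, only writing the final arithmetic as $2(-1)^1+(-1)^0=0$ rather than your polynomial form $1+2t=(1+t)Q(t)$.
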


\begin{proof}
    The existence of the two constant-sign solutions of (\ref{problem}) follows directly from Proposition \ref{proposition_constant_sign_solutions}, that is
    \begin{align*}
	u_0 \in \ints \left(C^1_0(\overline{\Omega})_+\right), \quad v_0 \in -\ints \left(C^1_0(\overline{\Omega})_+\right).
    \end{align*}
    Suppose that $K_\ph=\{0,u_0,v_0\}$ and recall that 
    \begin{align}\label{relation_u0-v0}
	C_k(\ph,u_0)=C_k(\ph,v_0)=\delta_{k,1} \Z \quad \text{for all } k \geq 0
    \end{align}
    (see Proposition \ref{proposition_phi_u0_v0}). Thanks to Proposition \ref{proposition_phi_phi_plus_minus_zero} we know that
    \begin{align}\label{relation_zero}
	C_k(\varphi,0)=\delta_{k,0}\Z \quad \text{for all } k \geq 0.
    \end{align}
    Finally, Proposition \ref{proposition_phi_infinity} implies
    \begin{align}\label{relation_infinity}
	C_k(\ph,\infty)=0 \quad \text{for all } k \geq 0.
    \end{align}
    Combining \eqref{relation_u0-v0}--\eqref{relation_infinity} and the Morse relation with $t=-1$ (see \eqref{morse_relation}) yields
    \begin{align*}
	2(-1)^1+(-1)^0=0,
    \end{align*}
    which is a contradiction. Thus, we can find $y_0 \in K_{\ph}\setminus\{0,u_0,v_0\}$ which means that $y_0$ is a third nontrivial solution of \eqref{problem} and as before, the nonlinear regularity theory guarantees that $y_0 \in C^1_0(\overline{\Omega})$. That finishes the proof.
\end{proof}

\begin{remark}
    The first multiplicity result (three-solutions-theorem) for superlinear elliptic equations has been proved by Wang \cite{1991-Wang}. In that work $p=2, a(\xi)=\xi$ for all $\xi\in \R^
    N$ (hence the differential operator is the Laplacian, semilinear equation) and $f(x,\cdot)=f(\cdot)$ (i.e., the reaction is $x$-independent), $f \in C^1(\R), f'(0)=0$ and it satisfies the Ambrosetti-Rabinowitz condition (see \eqref{AR1}, \eqref{AR2}). We point out that Theorem \ref{theorem_sec3} extends significantly the multiplicity result of Wang \cite{1991-Wang}. Other multiplicity results for $p$-Laplacian equations with a superlinear reaction satisfying more restrictive conditions than H(f)$_1$ were proved by Liu \cite{2010-Liu} and Sun \cite{2010-Sun}. For Neumann problems driven by the $p$-Laplacian we refer to Aizicovici-Papageorgiou-Staicu \cite{2009-Aizicovici-Papageorgiou-Staicu}.
\end{remark}

\section{Five nontrivial solutions}\label{section_four}

In this section we produce additional nontrivial solutions for problem \eqref{problem} by changing the geometry of the problem near the origin. Roughly speaking we require that $f(x,\cdot)$ exhibits an oscillatory behavior near zero. We also suppose some stronger assumptions on the map $a(\cdot)$ which allows us to prove the existence of five nontrivial solutions of \eqref{problem} given with complete sign information. The results in this section extend the recent work of Aizicovici-Papageorgiou-Staicu \cite{2012-Aizicovici-Papageorgiou-Staicu}.

The new hypotheses on the map $a(\cdot)$ are the following.

\begin{enumerate}[leftmargin=1.2cm]
    \item[H(a)$_2$:]
	$a(\xi)=a_0(\|\xi\|)\xi$ for all $\xi \in \R^N$ with $a_0(t)>0$ for all $t>0$, hypotheses H(a)$_2$(i)--(iii) are the same as the corresponding hypotheses H(a)$_1$(i)--(iii) and
	\begin{enumerate}
	    \item[(iv)]
		$pG_0(t)-t^2a_0(t) \geq - c_6$ for all $t>0$ and some $c_6>0$;
	    \item[(v)]
		there exists $q \in (1,p)$ such that $t \mapsto G_0\l(t^{\frac{1}{q}}\r)$ is convex in $(0,+\infty)$, 
		\begin{align*}
		    \limsup_{t \to 0^+} \frac{qG_0(t)}{t^q}<+\infty,
		\end{align*}    
		and $t^2a_0(t)-qG_0(t) \geq \hat{\eta} t^p$ for all $t>0$ and some $\hat{\eta}>0$.
	\end{enumerate}
\end{enumerate}

\begin{remark}
    The examples given in Example \ref{example_a} still satisfy the new hypotheses H(a)$_2$. Note that hypothesis H(a)$_2$(v) implies
    \begin{align}\label{new_estimate}
	G(\xi) \leq c_7 (\|\xi\|^q+\|\xi\|^p) \quad \text{for all }\xi \in \R^N,
    \end{align}
    with some $c_7>0$.
\end{remark}

Furthermore, we suppose new hypotheses on the reaction $f: \Omega \times \R \to \R$ as follows.

\begin{enumerate}[leftmargin=1.2cm]
    \item[H(f)$_2$:]
	$f: \Omega \times \R \to \R$ is a Carath\'{e}odory function such that $f(x,0)=0$ for a.a. $x\in\Omega$, hypotheses H(f)$_2$(i)-(iii) are the same as the corresponding hypotheses H(f)$_1$(i)--(iii) and
	\begin{enumerate}
	    \item[(iv)]
		there exist $\zeta \in (1,q)$ ($q$ as in hypothesis H(a)$_2$(v)) and $\delta>0$ such that
		\begin{align*}
		    \zeta F(x,s) \geq f(x,s)s > 0 \quad \text{for a.a. }x\in \Omega \text{ and for all }0<|s| \leq \delta
		\end{align*}
		and
		\begin{align*}
		    \essinf_\Omega F(\cdot,\pm\delta)>0;
		\end{align*}
	    \item[(v)]
		there exist real numbers $\xi_-<0<\xi_+$ such that
		\begin{align*}
		    f(x,\xi_+)\leq \eta_1<0<\eta_2 \leq f(x,\xi_-) \quad \text{for a.a. }x\in \Omega;
		\end{align*}
	    \item[(vi)]
		for every $\varrho>0$, there exists $\xi_\varrho>0$ such that
		\begin{align*}
		    s\mapsto f(x,s)+\xi_\varrho|s|^{p-2}s
		\end{align*}
		is nondecreasing on $[-\varrho,\varrho]$ for a.a. $x\in\Omega$.
	\end{enumerate}
\end{enumerate}

\begin{remark}
    Hypothesis H(f)$_2$(iv) implies that $F(x,s)\geq M_{19}|s|^\zeta$ for a.a. $x\in \Omega$, for all $|s|\leq \delta$, and some $M_{19}>0$. We also point out that $f(x,\cdot)$ exhibits an oscillatory behavior near zero which follows directly from hypothesis H(f)$_2$(v).
\end{remark}

\begin{example}
    As before, we drop the $x$-dependence. The following function satisfies hypotheses H(f)$_2$.
    \begin{align*}
	f(s)=
	\begin{cases}
	    |s|^{\tau-2}s-2|s|^{p-2}s \quad & \text{if } |s|\leq 1,\\
	    |s|^{p-2}s\ln |s| - |s|^{q-2}s & \text{if }|s|>1
	\end{cases} \quad \text{with }1<q,\tau<p.
    \end{align*}
    Note that this $f$ does not satisfy the Ambrosetti-Rabinowitz condition.
\end{example}

First we produce two nontrivial constant sign solutions.

\begin{proposition}\label{proposition_Sec4_two_constant_sign}
    Let the hypotheses H(a)$_2$ and H(f)$_2$ be satisfied. Then problem (\ref{problem}) has at least two nontrivial constant sign solutions $u_0 \in \ints \left(C^1_0(\overline{\Omega}) \right)$ and $v_0 \in -\ints \left(C^1_0(\overline{\Omega}) \right)$ such that
    \begin{align*}
	\xi_- < v_0(x) \leq 0 \leq u_0(x) < \xi_+ \quad \text{for all } x \in \close.
    \end{align*}
    Moreover, both solutions are local minimizers of the energy functional $\varphi$.
\end{proposition}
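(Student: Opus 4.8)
The plan is to obtain the positive solution $u_0$ by a truncation-and-minimization argument and then, thanks to the oscillatory hypothesis H(f)$_2$(iv), to verify that $u_0$ is a genuine local minimizer of $\varphi$; the negative solution $v_0$ is produced symmetrically. First I would introduce the truncation of the reaction at the levels $0$ and $\xi_+$: set
\begin{align*}
    \hat{f}_+(x,s)=
    \begin{cases}
        0 & \text{if } s<0,\\
        f(x,s) & \text{if } 0\le s\le \xi_+,\\
        f(x,\xi_+) & \text{if } s>\xi_+,
    \end{cases}
\end{align*}
let $\hat{F}_+(x,s)=\int_0^s \hat f_+(x,t)\,dt$, and consider the functional $\hat\varphi_+(u)=\into G(\nabla u)\,dx-\into \hat F_+(x,u)\,dx$ on $W^{1,p}_0(\Omega)$. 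Because $\hat f_+(x,\cdot)$ is bounded, Corollary \ref{corollary_upper_lower_estimates} gives $\hat\varphi_+(u)\ge \frac{c_1}{p(p-1)}\|\nabla u\|_{L^p(\Omega)}^p - c\|u\|_{L^1(\Omega)}-c$, so $\hat\varphi_+$ is coercive and, being sequentially weakly lower semicontinuous, attains a global minimizer $u_0\in W^{1,p}_0(\Omega)$.

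Next I would check $u_0\not\equiv 0$ and $0\le u_0\le \xi_+$. For the sign information, test the Euler equation $\langle A(u_0),v\rangle=\into \hat f_+(x,u_0)v\,dx$ with $v=-u_0^-$ and with $v=(u_0-\xi_+)^+$; monotonicity of $A$ (Lemma \ref{lemma_properties}(c)) together with the definition of $\hat f_+$ forces $u_0^-=0$ and $(u_0-\xi_+)^+=0$, whence $0\le u_0(x)\le\xi_+$ a.e., and on this range $\hat f_+(x,\cdot)=f(x,\cdot)$, so $u_0$ solves \eqref{problem}. To see $u_0\ne 0$, evaluate $\hat\varphi_+$ along $t\hat u_1(q)$ (or along $t$ times a fixed nonnegative $C^1_0$ function) for small $t>0$: by the estimate \eqref{new_estimate} the $G$-term is $O(t^q)$, while hypothesis H(f)$_2$(iv) gives $\hat F_+(x,ts)\ge M_{19} t^\zeta|s|^\zeta$ with $\zeta<q$, so $\hat\varphi_+(t\hat u_1(q))<0$ for $t$ small; hence $\hat\varphi_+(u_0)<0=\hat\varphi_+(0)$ and $u_0\ne 0$. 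The strict positivity and the conclusion $u_0\in\ints(C^1_0(\close)_+)$ with $u_0(x)<\xi_+$ then follow exactly as in Proposition \ref{proposition_constant_sign_solutions}: nonlinear regularity (Ladyzhenskaya--Ural'tseva, Lieberman) gives $u_0\in C^1_0(\close)$, hypothesis H(f)$_2$(vi) lets us apply the Pucci--Serrin strong maximum principle and boundary point lemma (using H(a)$_2$(v) to verify the structural condition $t^2a_0(t)-qG_0(t)\ge\hat\eta t^p$ in place of \eqref{pucci2}), and H(f)$_2$(v) with $f(x,\xi_+)\le\eta_1<0$ rules out $u_0$ touching $\xi_+$, so $u_0(x)<\xi_+$ for all $x\in\close$.

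Finally, I would argue that $u_0$ is a local minimizer of the original $\varphi$. Since $u_0\in\ints(C^1_0(\close)_+)$ with $u_0<\xi_+$ uniformly, there is a $C^1_0(\close)$-neighborhood $\mathcal V$ of $u_0$ consisting of functions $u$ with $0\le u(x)\le\xi_+$ for all $x$; on such $u$ one has $\hat F_+(x,u)=F(x,u)$ pointwise, hence $\varphi(u)=\hat\varphi_+(u)\ge\hat\varphi_+(u_0)=\varphi(u_0)$. Thus $u_0$ is a local $C^1_0(\close)$-minimizer of $\varphi$, and Proposition \ref{proposition_local_minimizers} upgrades this to a local $W^{1,p}_0(\Omega)$-minimizer. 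The argument for $v_0\in -\ints(C^1_0(\close)_+)$ with $\xi_-<v_0\le 0$ is identical, truncating at the levels $\xi_-$ and $0$ and using $f(x,\xi_-)\ge\eta_2>0$ from H(f)$_2$(v). The main obstacle is the last step: one must be careful that the $C^1$-closeness of $u$ to $u_0\in\ints(C^1_0(\close)_+)$, \emph{combined with} the strict inequality $u_0<\xi_+$ on the compact set $\close$, really does confine $u$ to the region $0\le u\le\xi_+$ where the truncated and original functionals agree — this is where the strict bounds $\xi_-<v_0\le 0\le u_0<\xi_+$ in the statement are essential, and where H(f)$_2$(v) (not merely superlinearity) does the work.
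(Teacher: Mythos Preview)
Your proposal is correct and follows essentially the same route as the paper's proof: truncate $f(x,\cdot)$ at $\{0,\xi_+\}$, minimize the resulting coercive functional $\hat\varphi_+$, use the test functions $-u_0^-$ and $(u_0-\xi_+)^+$ to confine the minimizer to $[0,\xi_+]$, show $\hat\varphi_+(t\hat u_1(q))<0$ via H(a)$_2$(v) and H(f)$_2$(iv) to rule out $u_0=0$, apply regularity and the Pucci--Serrin maximum principle, and then upgrade the $C^1_0$-local minimizer to a $W^{1,p}_0$-one via Proposition~\ref{proposition_local_minimizers}. The one step the paper treats with more care than you do is the \emph{strict} inequality $u_0<\xi_+$ on $\close$: rather than just saying ``$f(x,\xi_+)\le\eta_1<0$ rules out touching $\xi_+$'', the authors set $u_\delta=u_0+\delta$, use H(f)$_2$(v),(vi) to show $-\divergenz a(\nabla u_\delta)+\xi_\varrho u_\delta^{p-1}\le -\divergenz a(\nabla\xi_+)+\xi_\varrho\xi_+^{p-1}$ for small $\delta>0$, and invoke the Pucci--Serrin weak comparison principle to conclude $u_\delta\le\xi_+$, hence $u_0<\xi_+$ --- a detail worth spelling out since this is precisely what places $u_0$ in $\ints_{C^1_0(\close)}[0,\xi_+]$.
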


\begin{proof}
    Let $\hat{f}_+ : \Omega \times \R \to \R$ be the truncation function defined by
    \begin{align}\label{Sec4_3}
	\begin{split}
	    \hat{f}_+(x,s)=
	    \begin{cases}
		0 \quad & \text{if } s<0\\
		f(x,s) & \text{if } 0 \leq s \leq \xi_+\\
		f(x,\xi_+) & \text{if } \xi_+<s
	    \end{cases},
	\end{split}
    \end{align}
    which is known to be a Carath\'{e}odory function. We introduce the $C^1$-functional $\hat{\varphi}_+ : W^{1,p}_0(\Omega) \to \R$ through
    \begin{align*}
	\hat{\varphi}_+(u)=\into G(\nabla u(x))dx - \into \hat{F}_+(x,u(x))dx
    \end{align*}
    with $\hat{F}_+(x,s)=\int^s_0 \hat{f}_+(x,t)dt$. It is clear that $\hat{\varphi}_+: W^{1,p}_0(\Omega) \to \R$ is coercive (see Corollary \ref{corollary_upper_lower_estimates}, \eqref{Sec4_3}) and sequentially weakly lower semicontinuous. Hence, its global minimizer $u_0 \in W^{1,p}_0(\Omega)$ exists, that is
    \begin{align*}
	\hat{\varphi}_+(u_0)=\inf \left\{\hat{\varphi}_+(u): u \in W^{1,p}_0(\Omega) \right \}=\hat{m}_+.
    \end{align*}
    By virtue of hypothesis H(f)$_2$(v) we know that we can find $\beta>0$ and $\delta_0 \in \left(0,\min \left\{\delta,\xi_+ \right\} \right)$ such that
    \begin{align}\label{Sec4_4}
	G(\xi) \leq \beta \|\xi\|^q \quad \text{for all } \|\xi\|\leq \delta_0.
    \end{align}
    Recall that hypothesis H(f)$_2$(iv) implies
    \begin{align}\label{Sec4_1}
	F(x,s) \geq M_{20} |s|^\zeta \quad \text{for a.a. }x \in \Omega \text{ and for all }|s| \leq \delta_0,
    \end{align}
    with some $M_{20}>0$. Since $\hat{u}_1(q) \in \interior$ we can choose $t \in (0,1)$ sufficiently small such that $t \hat{u}_1(q)(x) \in [0,\delta_0]$ for all $x \in \close$. Taking into account \eqref{Sec4_4}, \eqref{Sec4_1} and $\|\hat{u}_1(q)\|_{\Lp{q}}=1$, we obtain
    \begin{align}\label{Sec4_2}
      \begin{split}
	\hat{\varphi}_+(t \hat{u}_1(q))
	& =\into G\l(\nabla t \hat{u}_1\r)dx- \into \hat{F}_+(x,t \hat{u}_1) dx\\
	& \leq \beta t^q \l\|\nabla(\hat{u}_1(q)) \r\|_{L^q(\Omega)}^q-M_{20}t^\zeta \l\|\hat{u}_1(q)\r\|^\zeta_{L^\zeta(\Omega)}\\
	& = \beta t^q \hat{\lambda}_1(q)-M_{20}t^\zeta \l\|\hat{u}_1(q)\r\|^\zeta_{L^\zeta(\Omega)}.
      \end{split}
    \end{align}
    Since $\zeta<q$, choosing $t \in (0,1)$ small enough, \eqref{Sec4_2} gives
    \begin{align*}
	\hat{\varphi}_+(t \hat{u}_1(q))<0,
    \end{align*}
    meaning
    \begin{align*}
	\hat{\varphi}_+(u_0)=\hat{m}_+<0=\hat{\varphi}_+(0).
    \end{align*}
    We conclude
    \begin{align}\label{Sec4_5}
	u_0 \neq 0.
    \end{align}
    On the other hand, since $u_0$ is a critical point of $\hat{\varphi}_+$ there holds
    \begin{align}\label{Sec4_6}
	\left \lan A u_0,v \right\ran = \left\lan N_{\hat{f}_+}(u_0),v\right \ran\quad \text{for all }v \in W^{1,p}_0(\Omega).
    \end{align}
    Choosing $v=-u_0^-$ as test function in (\ref{Sec4_6}) and applying Lemma \ref{lemma_properties}(c) as well as the definition of the truncation (see \eqref{Sec4_3}) yields
    \begin{align*}
	\frac{c_1}{p-1} \l\|\nabla u_0^-\r\|^p_{L^p(\Omega)} \leq 0.
    \end{align*}
    Hence,
    \begin{align}\label{Sec4_9}
	u_0 \geq 0.
    \end{align}
    Now, making use of hypothesis H(f)$_2$(v) and taking $(u_0-\xi_+)^+ \in W^{1,p}_0(\Omega)$ as test function in \eqref{Sec4_6} one gets
    \begin{align}\label{Sec4_7}
	\begin{split}
	    \into \left(a(\nabla u_0),\nabla \left(u_0-\xi_+\right)^+ \right)_{\R^N}dx
	    & = \into \hat{f}_+(x,u_0)\left(u_0-\xi_+\right)^+dx\\
	    & = \into f(x,\xi_+)\left(u_0-\xi_+\right)^+dx\\
	    & \leq 0.
	\end{split}
    \end{align}
    From \eqref{Sec4_7} it follows
    \begin{align*}
	\int_{\left\{u_0>\xi_+ \right\}} \left(a(\nabla u_0)-a(\nabla \xi_+),\nabla u_0-\nabla \xi_+ \right)_{\R^N}dx\leq 0,
    \end{align*}
    and by virtue of Lemma \ref{lemma_properties}(a),
    \begin{align*}
	\left|\left\{u_0>\xi_+\right\} \right |_N =0.
    \end{align*}
    Hence,
    \begin{align}\label{Sec4_8}
	u_0(x) \leq \xi_+ \quad \text{a.e. in } \Omega.
    \end{align}
    Combining \eqref{Sec4_5}, \eqref{Sec4_9} and \eqref{Sec4_8} we have
    \begin{align*}
	0 \leq u_0(x) \leq \xi_+ \text{ a.e. in } \Omega \text{ and } u_0 \neq 0.
    \end{align*}
    Then, \eqref{Sec4_6} becomes
    \begin{align*}
	\left \lan A u_0,v \right\ran = \left\lan N_{f}(u_0),v\right \ran\quad \text{for all }v \in W^{1,p}_0(\Omega),
    \end{align*}
    meaning that
    \begin{equation*}
	\begin{aligned}
	    -\divergenz a(\nabla u_0(x)) & = f(x,u_0(x)) \quad && \text{in } \Omega,\\
	    u & = 0  &&\text{on } \partial \Omega.
	\end{aligned}
    \end{equation*}
    The nonlinear regularity theory ensures that $u_0 \in C^1_0(\close)$ (see Ladyzhenskaya-Ural{\cprime}tseva \cite{1968-Ladyzhenskaya-Ural'tseva} and Lieberman \cite[p. 320]{1991-Lieberman}).

    Thanks to hypothesis H(f)$_2$(vi) we find for $\varrho=\xi_+$ a constant $\xi_\varrho >0$ such that
    \begin{align*}
	-\divergenz a(\nabla u_0(x))+\xi_\varrho u_0(x)^{p-1}=f(x,u_0(x))+\xi_\varrho u_0(x)^{p-1} \geq 0 \quad \text{for a.a. }x\in \Omega.
    \end{align*}
    Hence,
    \begin{align*}
	\divergenz a(\nabla u_0(x))\leq \xi_\varrho u_0(x)^{p-1} \quad \text{for a.a. }x\in \Omega.
    \end{align*}
    Due to Hypothesis H(a)$_2$(iv) the strong maximum principle implies that $u_0 \in \ints \left(C^1_0(\overline{\Omega})_+\right)$ (see Pucci-Serrin \cite[pp. 111 and 120]{2007-Pucci-Serrin}).

    Now, let $\delta>0$ and set $u_\delta=u_0+\delta \in C^1(\close)$. Recall that $u_0(x)\leq \xi_+$ for all $x \in \close$, by means of hypotheses H(f)$_2$(v), (vi), we have
    \begin{align}\label{Sec4_12}
	\begin{split}
	    -\divergenz a(\nabla u_\delta(x))+\xi_\varrho u_\delta(x)^{p-1}
 	    & \leq -\divergenz a(\nabla u_0(x))+\xi_\varrho u_0(x)^{p-1}+o(\delta)\\
	    & = f(x,u_0(x))+\xi_\varrho u_0(x)^{p-1} +o(\delta)\\
	    & \leq f(x, \xi_+)+\xi_\varrho \xi_+^{p-1}+o(\delta)\\
	    & \leq \eta_1+\xi_\varrho \xi_+^{p-1}+o(\delta).
	\end{split}
    \end{align}
    Recall that $\eta_1<0$ (see H(f)$_2$(v)) and $o(\delta) \to 0^+$ as $\delta\to 0^+$. Then, for $\delta>0$ sufficiently small there holds $\eta_1+o(\delta)\leq 0$. Hence, from \eqref{Sec4_12} we obtain
    \begin{align*}
	\begin{split}
	    -\divergenz a(\nabla u_\delta(x))+\xi_\varrho u_\delta(x)^{p-1}v\leq -\divergenz a(\nabla \xi_+)+\xi_\varrho \xi_+^{p-1}.
	\end{split}
    \end{align*}
    Applying again Pucci-Serrin \cite[p. 61]{2007-Pucci-Serrin} it follows
    \begin{align*}
	u_\delta(x) \leq \xi_+ \quad \text{for all }x\in \Omega,
    \end{align*}
    consequently,
    \begin{align*}
	u(x) < \xi_+ \quad \text{for all }x\in \close.
    \end{align*}
    Therefore, we have
    \begin{align*}
	u_0 \in \ints_{C^1_0(\close)}[0,\xi_+].
    \end{align*}
    Since $\varphi \big|_{[0,\xi_+]}=\hat{\varphi}_+\big|_{[0,\xi_+]}$ we conclude that $u_0$ is a local $C^1_0(\close)$-minimizer of $\varphi$. So, Proposition \ref{proposition_local_minimizers} implies that $u_0$ is a local $W^{1,p}_0(\Omega)$-minimizer of $\varphi$.
    
    For the nontrivial negative solution we introduce the following truncation of the reaction $f(x,\cdot)$
    \begin{align*}
	\begin{split}
	    \hat{f}_-(x,s)=
	    \begin{cases}
		f(x,\xi_-) \quad & \text{if } s<\xi_-\\
		f(x,s) & \text{if } \xi_- \leq s \leq 0\\
		0 & \text{if } 0<s
	    \end{cases},
	\end{split}
    \end{align*}
    which is a Carath\'{e}odory function. Setting $\hat{F}_-(x,s)=\int_0^s \hat{f}_-(x,t)dt$ we consider the  $C^1$-functional $\hat{\ph}_-: \W1p0 \to \R$ defined by
    \begin{align*}
	\hat{\varphi}_-(u)=\into G(\nabla u(x))dx - \into \hat{F}_-(x,u(x))dx.
    \end{align*}
    Working as above via the direct method we produce a solution $v_0 \in - \interior$ being a local minimizer of $\ph$.
\end{proof}

\begin{remark}
    A careful inspection of the proof above reveals that we only needed hypotheses H(f)$_2$(iv), (v), (vi), i.e., the asymptotic conditions at $\pm \infty$ (see H(f)$_2$(ii), (iii)) are irrelevant. Moreover, the global growth condition H(f)$_2$(i) can be replaced by the following local one.
    \begin{center}
	For every $\varrho>0$ there exists $a_\varrho \in \Linf_+$ such that
	\begin{align*}
	    |f(x,s)| \leq a_\varrho(x) \quad \text{for a.a. }x\in \Omega \text{ and for all }|s|\leq \varrho.
	\end{align*}
    \end{center}

\end{remark}

Using these two nontrivial constant sign solutions we can produce two more precisely localized with respect to $u_0$ and $v_0$. Now we need the asymptotic conditions at $\pm \infty$.

\begin{proposition}\label{proposition_Sec4_four_constant_sign}
    Under the hypotheses H(a)$_2$ and H(f)$_2$ problem \eqref{problem} possesses two more nontrivial constant sign solutions $u_1 \in \interior$ and $v_1 \in -\interior$ satisfying
    \begin{align*}
	u_0(x)\leq u_1(x) \quad \text{and} \quad v_1(x)\leq v_0(x) \quad \text{for all }x\in \close
    \end{align*}
    with $u_1\neq u_0$ and $v_1\neq v_0$.
\end{proposition}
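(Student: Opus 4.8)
The plan is to produce the positive solution $u_1$; the negative solution $v_1$ then follows by the symmetric construction, replacing $u_0$ by $v_0$ and reversing all inequalities. Recall from Proposition~\ref{proposition_Sec4_two_constant_sign} that $u_0\in\interior$ solves \eqref{problem}, that $0\le u_0(x)<\xi_+$ on $\close$, and that $u_0$ is a local $\W1p0$-minimizer of $\ph$. I would truncate the reaction from below at $u_0$ by setting
\[
\tilde{f}_+(x,s)=
\begin{cases}
f(x,u_0(x)) & \text{if } s\le u_0(x),\\
f(x,s) & \text{if } s> u_0(x),
\end{cases}
\]
which is a Carath\'eodory function satisfying $|\tilde{f}_+(x,s)|\le a(x)(1+|s|^{r-1})+\|f(\cdot,u_0(\cdot))\|_{\infty}$, and then defining $\tilde{F}_+(x,s)=\int_0^s\tilde{f}_+(x,t)\,dt$ and the $C^1$-functional $\tilde{\ph}_+(u)=\into G(\nabla u)\,dx-\into\tilde{F}_+(x,u)\,dx$. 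The decisive feature of this truncation is the identity $\tilde{F}_+(x,s)=F(x,s)+[u_0(x)f(x,u_0(x))-F(x,u_0(x))]$ for all $s\ge u_0(x)$, so that $\tilde{\ph}_+$ and $\ph$ differ only by an additive constant on the order cone $\{u\in\W1p0:u\ge u_0\}$; in particular every critical point $u$ of $\tilde{\ph}_+$ with $u\ge u_0$ is a solution of \eqref{problem}.

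I would then verify three facts about $\tilde{\ph}_+$. First, $\tilde{\ph}_+$ satisfies the $C$-condition: the proof mirrors that of Proposition~\ref{proposition_C_condition} (hypotheses H(f)$_2$(i)--(iii) coincide with H(f)$_1$(i)--(iii), and the first inequality of H(a)$_2$(iv) is exactly what is invoked there), the only new feature being the bounded contribution of the reaction on $\{s\le u_0(x)\}$; one first tests a $C$-sequence $(u_n)$ with $-u_n^-$ and uses Lemma~\ref{lemma_properties}(c) to bound $(u_n^-)$ in $\W1p0$ (the new term being dominated by $\|f(\cdot,u_0(\cdot))\|_{\infty}\|u_n^-\|_1$), and then carries the now absorbable lower-order terms through the interpolation/bootstrap estimate exactly as before to bound $(u_n^+)$, after which the $(S)_+$-property of $A$ closes the argument. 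Second, $u_0$ is a local $\W1p0$-minimizer of $\tilde{\ph}_+$: for $u$ near $u_0$ in $\W1p0$ put $\bar u=\max\{u,u_0\}=u_0+(u-u_0)^+$; then $\|\bar u-u_0\|_{\W1p0}\le\|u-u_0\|_{\W1p0}$, so $\tilde{\ph}_+(\bar u)=\ph(\bar u)-c\ge\ph(u_0)-c=\tilde{\ph}_+(u_0)$ because $u_0$ is a local $\W1p0$-minimizer of $\ph$, while the convexity of $G$ together with the fact that $u_0$ solves \eqref{problem} gives
\[
\tilde{\ph}_+(u)-\tilde{\ph}_+(\bar u)=\int_{\{u<u_0\}}[G(\nabla u)-G(\nabla u_0)-(u-u_0)f(x,u_0)]\,dx\ge\lan A(u_0),-(u-u_0)^-\ran-\int_{\{u<u_0\}}(u-u_0)f(x,u_0)\,dx=0.
\]
Third, $\tilde{\ph}_+(t\hat u_1(p))\to-\infty$ as $t\to+\infty$: since $\hat u_1(p)\in\interior$ and $u_0\in C^1_0(\close)$ one has $t\hat u_1(p)\ge u_0$ for all large $t$, so on that range $\tilde{\ph}_+(t\hat u_1(p))=\ph(t\hat u_1(p))-c$, and the superlinearity H(f)$_2$(ii) drives this to $-\infty$ exactly as in \eqref{eq_303}. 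Assuming, as we may, that $u_0$ is an isolated local minimizer of $\tilde{\ph}_+$ (otherwise a sequence of distinct solutions of \eqref{problem} appears and there is nothing left to prove), there is $\rho>0$ with $\inf\{\tilde{\ph}_+(u):\|u-u_0\|_{\W1p0}=\rho\}>\tilde{\ph}_+(u_0)$, and Theorem~\ref{theorem_mountain_pass}, applied to $\tilde{\ph}_+$ with the pair of points $u_0$ and $t\hat u_1(p)$ for $t$ large, yields a critical point $u_1$ of $\tilde{\ph}_+$ with $\tilde{\ph}_+(u_1)>\tilde{\ph}_+(u_0)$, hence $u_1\ne u_0$.

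It remains to localize $u_1$. Testing $\lan A(u_1),v\ran=\into\tilde{f}_+(x,u_1)v\,dx$ with $v=(u_0-u_1)^+\ge0$ and subtracting the corresponding identity for $u_0$ (note that $\tilde{f}_+(x,u_1)=f(x,u_0)$ on $\{u_1<u_0\}$) gives $\lan A(u_0)-A(u_1),(u_0-u_1)^+\ran=0$, whence the strict monotonicity of Lemma~\ref{lemma_properties}(a) forces $(u_0-u_1)^+=0$, i.e.\ $u_0\le u_1$; in particular $u_1>0$ in $\Omega$, so $u_1\ne0$. Since $u_1\ge u_0$ we have $\tilde{f}_+(x,u_1)=f(x,u_1)$, so $u_1$ solves \eqref{problem}; the nonlinear regularity theory gives $u_1\in C^1_0(\close)$, and the strong maximum principle together with the boundary point lemma of Pucci--Serrin (applicable by virtue of H(f)$_2$(vi) and H(a)$_2$, exactly as in the proof of Proposition~\ref{proposition_Sec4_two_constant_sign}) upgrade this to $u_1\in\interior$; since $u_0,u_1$ are both continuous, $u_0(x)\le u_1(x)$ holds for all $x\in\close$. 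Running the mirror construction with the downward truncation $\tilde{f}_-(x,s)=f(x,v_0(x))$ for $s\ge v_0(x)$ and $\tilde{f}_-(x,s)=f(x,s)$ for $s<v_0(x)$ produces $v_1\in-\interior$ with $v_1\le v_0$ and $v_1\ne v_0$. The step I expect to be the main obstacle is the verification of the $C$-condition for $\tilde{\ph}_+$: unlike the functionals in Proposition~\ref{proposition_C_condition}, the truncated reaction is neither odd nor zero for $s\le0$, so the negative parts $u_n^-$ must be controlled first and the resulting error terms tracked carefully through the interpolation argument; the reduction to an isolated local minimizer is a second, more routine, technical point.
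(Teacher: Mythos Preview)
Your proposal is correct and follows the same overall architecture as the paper: truncate the reaction from below at $u_0$, show that $u_0$ is a local minimizer of the truncated functional, verify the $C$-condition, and apply the mountain pass theorem to produce $u_1\ge u_0$, $u_1\neq u_0$.

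The one genuine difference lies in how you establish that $u_0$ is a local $\W1p0$-minimizer of $\tilde\ph_+$. The paper introduces a \emph{second} truncation at the level $\xi_+$, obtaining a coercive functional $\hat\sigma_+$; it then minimizes $\hat\sigma_+$ globally and argues: either the global minimizer $\hat u_0$ is a new solution in $[u_0,\xi_+]$ (and we are already done), or $\hat u_0=u_0$, in which case, since $u_0\in\ints_{C^1_0(\close)}[0,\xi_+]$ and the two functionals agree on $[0,\xi_+]$, $u_0$ is a local $C^1_0(\close)$-minimizer of $\sigma_+$ and one invokes Proposition~\ref{proposition_local_minimizers}. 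Your route is more direct: you exploit that $u_0$ is already known from Proposition~\ref{proposition_Sec4_two_constant_sign} to be a local $\W1p0$-minimizer of $\ph$, together with the convexity of $G$ and the equation satisfied by $u_0$, to compare $\tilde\ph_+(u)$ with $\tilde\ph_+(\max\{u,u_0\})$ and then with $\tilde\ph_+(u_0)$. This avoids both the second truncation and the $C^1$--versus--$W^{1,p}$ machinery; what the paper's detour buys is that it does not need the convexity inequality $G(\xi')-G(\xi)\ge (a(\xi),\xi'-\xi)_{\R^N}$ explicitly and, as a by-product, sometimes produces the second solution already at the minimization stage. Two minor points: you should say ``isolated \emph{critical point}'' rather than ``isolated local minimizer'' (this is what is needed to get the strict mountain pass inequality on a small sphere, cf.\ \cite{2008-Aizicovici-Papageorgiou-Staicu}); and the $C$-condition for $\tilde\ph_+$ is indeed routine, exactly as you indicate---the paper also treats it in one line.
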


\begin{proof}
    We begin with the proof for the existence of $u_1$. For $u_0 \in \interior$ being the constant sign solution obtained in Proposition \ref{proposition_Sec4_two_constant_sign} we define the truncation mapping $e_+: \Omega \times \R \to \R$ through
    \begin{align}\label{Sec4_14}
	e_+(x,s)=
	\begin{cases}
	    f(x,u_0(x)) \quad & \text{if } s<u_0(x),\\
	    f(x,s) & \text{if } u_0(x) \leq s,
	\end{cases}
    \end{align}
    which is again a Carath\'{e}odory function. Setting $E_+(x,s)=\int_0^s e_+(x,t)dt$ we introduce the $C^1$-functional $\sigma_+ : W^{1,p}_0(\Omega) \to \R$ by
    \begin{align*}
	\sigma_+(u)=\into G(\nabla u(x))dx-\into E_+(x,u(x))dx.
    \end{align*}
    First we note that $\sigma_+$ fulfills the $C$-condition which can be shown as in the proof of Proposition \ref{proposition_C_condition} with minor modifications by applying \eqref{Sec4_14}.
    
    {\bf Claim:} We may assume that $u_0 \in \interior$ is a local minimizer of the functional $\sigma_+$.
    
    Recalling $u_0(x)< \xi_+$ for all $x\in \close$ we introduce the subsequent Carath\'{e}odory truncation function
     \begin{align}\label{Sec4_15}
	\hat{e}_+(x,s)=
	\begin{cases}
	    e_+(x,s) \quad & \text{if } s \leq \xi_+\\
	    e_+(x,\xi_+) & \text{if } s > \xi_+
	\end{cases}
    \end{align}
    and consider the $C^1$-functional $\hat{\sigma}_+:W^{1,p}_0(\Omega) \to \R$
    \begin{align*}
	\hat{\sigma}_+(u)=\into G(\nabla u(x)) dx -\into \hat{E}_+(x,u(x))dx
    \end{align*}
    with $\hat{E}_+(x,s)=\int^s_0 \hat{e}_+(x,t)dt$. Obviously, $\hat{\sigma}_+$ is coercive and sequentially weakly lower semicontinuous which implies due to the Weierstrass theorem that there is a global minimizer $\hat{u}_0 \in W^{1,p}_0(\Omega)$ meaning
    \begin{align*}
	\hat{\sigma}_+(\hat{u}_0)=\inf \left\{\hat{\sigma}_+(u): u \in W^{1,p}_0(\Omega) \right\}.
    \end{align*}
    In particular, this gives $\hat{\sigma}_+'(\hat{u}_0)=0$ and hence,
    \begin{align}\label{Sec4_16}
	\left\lan A(\hat{u}_0),v\right \ran=\left \lan N_{\hat{e}_+}(\hat{u}_0),v\right\ran \quad \text{for all }v \in W^{1,p}_0(\Omega).
    \end{align}
    Taking $v=\left(u_0-\hat{u}_0 \right)^+ \in \W1p0$ in the last equation and using \eqref{Sec4_14}, \eqref{Sec4_15} we obtain
    \begin{align*}
	\left\lan A(\hat{u}_0),\left(u_0-\hat{u}_0 \right)^+\right \ran
	& =\into \hat{e}_+(x,\hat{u}_0))\left(u_0-\hat{u}_0 \right)^+dx\\
	& = \into  f(x,u_0) \left(u_0-\hat{u}_0 \right)^+ dx\\
	& = \left\lan A(u_0),\left(u_0-\hat{u}_0 \right)^+\right \ran.
    \end{align*}
    It follows that
    \begin{align*}
	\left\lan A(u_0)-A(\hat{u}_0),\left(u_0-\hat{u}_0 \right)^+\right \ran=0,
    \end{align*}
    meaning
    \begin{align*}
	\int_{\{u_0>\hat{u}_0\}} \left(a(\nabla u_0)-a(\nabla \hat{u}_0),\nabla u_0-\nabla \hat{u}_0 \right)_{\RN}dx=0.
    \end{align*}
    Hence, $|\{u_0>\hat{u}_0\}|_N=0$, that is, $u_0 \leq \hat{u}_0$. Now, taking $v=\left(\hat{u}_0-\xi_+ \right)^+$ in \eqref{Sec4_16}, applying \eqref{Sec4_14}, \eqref{Sec4_15}, H(f)$_2$(v), and recalling $u_0(x)<\xi_+$ for all $x \in \Omega$, we get
    \begin{align*}
	\left\lan A(\hat{u}_0),\left(\hat{u}_0-\xi_+ \right)^+\right \ran
	& =\into \hat{e}_+(x,\hat{u}_0))\left(\hat{u}_0-\xi_+ \right)^+dx\\
	& = \into  f(x,\xi_+) \left(\hat{u}_0-\xi_+ \right)^+ dx\\
	& \leq 0,
    \end{align*}
    which implies
    \begin{align*}
	\int_{\{\hat{u}_0>\xi_+\}} \l\|\nabla \hat{u}_0\r\|^p dx \leq 0
    \end{align*}
    (see Lemma \ref{lemma_properties}(c)). As above we conclude that $\l|\l\{\hat{u}_0>\xi_+\r\}\r|_N=0$, i.e., $\hat{u}_0\leq \xi_+$. Then, $\hat{u}_0 \in [u_0,\xi_+]$ and  equation \eqref{Sec4_16} becomes
    \begin{align*}
	\left\lan A(\hat{u}_0),v\right \ran=\left \lan N_{f}(\hat{u}_0),v\right\ran \quad \text{for all }v \in W^{1,p}_0(\Omega),
    \end{align*}
    which means that $\hat{u}_0$ solves our original problem \eqref{problem}. Applying again the nonlinear regularity theory we obtain that $\hat{u}_0 \in \interior$ (see the proof of Proposition \ref{proposition_Sec4_two_constant_sign}).
    If $\hat{u}_0 \neq u_0$, then the assertion of the proposition is proved and we are done.

    Let us suppose that $\hat{u}_0=u_0$. By means of the truncations in (\ref{Sec4_14}),\eqref{Sec4_15} we have
    \begin{align*}
	\sigma_+\big|_{[0,\xi_+]}=\hat{\sigma}_+\big|_{[0,\xi_+]}.
    \end{align*}
    Since $\hat{u}_0=u_0 \in \ints_{C^1_0(\close)}[0,\xi_+]$ we see that $\hat{u}_0=u_0$ is a local $C_0^1(\close)$-minimizer of $\sigma_+$ and with regard to Proposition \ref{proposition_local_minimizers} it is also a local $\W1p0$-minimizer of $\sigma_+$. 
    This proves the claim.

    We may also assume that $u_0$ is an isolated critical point of $\sigma_+$, otherwise we would find a sequence $(u_n)_{n \geq 1} \subseteq \W1p0$ such that
    \begin{align}\label{equation_2}
	u_n \to u_0 \text{ in } \W1p0 \quad \text{and} \quad \sigma_+'(u_n)=0 \quad \text{for all } n\geq 1.
    \end{align}
    It follows
    \begin{align*}
	A(u_n)=N_{e_+}(u_n) \quad \text{for all }n \geq 1
    \end{align*}
    meaning that
    \begin{align}\label{equation_1}
	-\divergenz a(\nabla u_n(x))=e_+(x,u_n(x)) \quad \text{a.e. in }\Omega.
    \end{align}
    Then, from \eqref{equation_2}, \eqref{equation_1} and Ladyzhenskaya-Ural{\cprime}tseva \cite{1968-Ladyzhenskaya-Ural'tseva} we can find $M_{21}>0$ such that $\|u_n\|_{\Linf} \leq M_{21}$. Applying the regularity results of Lieberman \cite{1991-Lieberman} we find $\gamma \in (0,1)$ and $M_{22}>0$ such that
    \begin{align*}
	u_n \in C_0^{1,\gamma}(\close) \text{ and } \|u_n\|_{C_0^{1,\gamma}(\close)} \leq M_{22} \quad \text{for all }n \geq 1.
    \end{align*}
    Exploiting the compact embedding of $C^{1,\gamma}(\close)$ into $C^1_0(\close)$ and by virtue of \eqref{equation_2} one gets
    \begin{align*}
	u_n \to u_0, \quad u_n \geq u_0 \quad \text{for all }n \geq 1.
    \end{align*}
    That means we have proved the existence of a whole sequence $(u_n)_{n\geq 1} \subseteq \interior$ of distinct nontrivial positive solutions of \eqref{problem}. Hence, we are done. Therefore, we may consider $u_0$ as an isolated critical point of $\sigma_+$.

    Because of the claim there exists a number $\varrho \in (0,1)$ such that
    \begin{align}\label{Sec4_18}
	\sigma_+(u_0)<\inf \left\{\sigma_+(u):\|u-u_0\|_{\W1p0}=\varrho \right\}=:\eta_\varrho^+
    \end{align}
    (see Aizicovici-Papageorgiou-Staicu \cite[Proof of Proposition 29]{2008-Aizicovici-Papageorgiou-Staicu}). Recall that $\sigma_+$ satisfies the $C$-condition. Thanks to hypothesis H(f)$_2$(ii) we verify that if $u \in \interior$, then $\sigma_+(tu) \to -\infty$ as $t \to +\infty$. These facts combined with \eqref{Sec4_18} permit the usage of the mountain pass theorem stated in Theorem \ref{theorem_mountain_pass}. This provides the existence of $u_1 \in W^{1,p}_0(\Omega)$ such that
    \begin{align}\label{Sec4_20}
	u_1 \in K_{\sigma_+} \quad \text{and} \quad \eta^+_\varrho \leq e_+(u_1).
    \end{align}
    With a view to \eqref{Sec4_18} and \eqref{Sec4_20} we see that $u_0 \leq u_1, u_0 \neq u_1$ and $u_1 \in \interior$ solves problem \eqref{problem}.
    
    The case of a second nontrivial negative solution $v_1 \in -\interior$ with $v_1 \leq v_0$ and $v_1 \neq v_0$ can be shown using similar arguments.
\end{proof}

Now we are interested to find a fifth solution of (\ref{problem}) being a sign-changing one. In order to produce the nodal solution we will use some tools from Morse theory. For this purpose we start by computing the critical groups at the origin of the $C^1$-energy functional $\ph:\W1p0 \to \R^N$ defined by
\begin{align*}
    \ph(u)=\into G(\nabla u(x))dx-\into F(x,u(x))dx.
\end{align*}
Our proof uses ideas from Moroz \cite{1997-Moroz} in which $G(\xi)=\frac{1}{2}\|\xi\|^2$ for all $\xi \in \R^N$ with more restrictive conditions on $f:\Omega \times \R \to \R$ and from Jiu-Su \cite{2003-Jiu-Su} where $G(\xi)=\frac{1}{p}\|\xi\|^p$ for all $\xi \in \R^N$.

\begin{proposition}\label{proposition_Sec4_phi_zero}
    Under the assumptions H(a)$_2$ and H(f)$_2$(i),(iv) there holds $C_k(\varphi,0)=0$ for all $k \geq 0$.
\end{proposition}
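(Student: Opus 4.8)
\emph{Plan.} We may assume that $0$ is isolated in $K_\varphi$, since otherwise \eqref{problem} already has infinitely many solutions. The idea is to exploit the ``concave'' behaviour of $f(x,\cdot)$ near the origin, encoded by the inequalities $\zeta<q<p$ coming from H(f)$_2$(iv) and H(a)$_2$(v): this will give that $0$ is not a local minimizer of $\varphi$ and, crucially, that the radial derivative of $\varphi$ is strictly positive along the level set $\{\varphi=0\}$ near $0$; from these two facts I would build explicit deformation retractions on a small closed ball $\overline{B}_\rho:=\{u\in\W1p0:\|u\|_{\W1p0}\le\rho\}$ and conclude $C_k(\varphi,0)=H_k(\varphi^0\cap\overline{B}_\rho,\varphi^0\cap\overline{B}_\rho\setminus\{0\})=0$. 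This is in the spirit of Moroz \cite{1997-Moroz} and Jiu-Su \cite{2003-Jiu-Su}. The analytic inputs are, from H(a)$_2$(v): $(a(\xi),\xi)_{\R^N}=\|\xi\|^2a_0(\|\xi\|)\ge qG(\xi)+\hat{\eta}\|\xi\|^p$ (rewriting $t^2a_0(t)-qG_0(t)\ge\hat\eta t^p$), the bound $G(t\xi)\le t^qG(\xi)$ for $t\in[0,1]$ (convexity of $t\mapsto G_0(t^{1/q})$ together with $G_0(0)=0$), and the upper estimate \eqref{new_estimate}; and from H(f)$_2$(iv): $0<f(x,s)s\le\zeta F(x,s)$ for $0<|s|\le\delta$ together with the recorded consequence $F(x,s)\ge M_{19}|s|^\zeta$ for $|s|\le\delta$.

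\emph{Two claims.} \emph{(1)} For every $v\in\W1p0\setminus\{0\}$ one has $\varphi(tv)<0$ for all small $t>0$; in particular $0$ is not a local minimizer. One bounds $\into G(t\nabla v)\,dx$ from above by $c_7(t^q\|\nabla v\|_{L^q(\Omega)}^q+t^p\|\nabla v\|_{L^p(\Omega)}^p)$ via \eqref{new_estimate}, and $\into F(x,tv)\,dx$ from below by splitting $\Omega$ into $\{|tv|\le\delta\}$, where $F(x,tv)\ge M_{19}t^\zeta|v|^\zeta$, and $\{|tv|>\delta\}$, where $|F(x,tv)|\le c\,t^r|v|^r$ (by H(f)$_2$(i) and $|tv|\le\delta^{1-r}|tv|^r$) on a set of measure $o(1)$ as $t\to0^+$; dividing by $t^\zeta$ and letting $t\to0^+$ leaves $-M_{19}\|v\|_{L^\zeta(\Omega)}^\zeta<0$, since $\zeta<q<p$ and $\zeta<r$. \emph{(2)} There is $\rho\in(0,1)$ with $\langle\varphi'(u),u\rangle>0$ whenever $0<\|u\|_{\W1p0}\le\rho$ and $\varphi(u)=0$: writing $\langle\varphi'(u),u\rangle=\into(a(\nabla u),\nabla u)_{\R^N}\,dx-\into f(x,u)u\,dx$, the first estimate above bounds the first integral below by $q\into G(\nabla u)\,dx+\hat\eta\|\nabla u\|_{L^p(\Omega)}^p$; on $\{|u|\le\delta\}$ one has $f(x,u)u\le\zeta F(x,u)$, while the contributions of $\{|u|>\delta\}$ to $\into f(x,u)u\,dx$ and to $\into F(x,u)\,dx$ are $\lesssim\|u\|_{L^r(\Omega)}^r\le C\|u\|_{\W1p0}^{r-p}\|u\|_{\W1p0}^p$ by the Sobolev embedding ($p<r<p^*$); since $\varphi(u)=0$ yields $\into G(\nabla u)\,dx=\into F(x,u)\,dx\ge0$, one gets $\langle\varphi'(u),u\rangle\ge(\hat\eta-C\rho^{r-p})\|\nabla u\|_{L^p(\Omega)}^p>0$ once $\rho$ is small.

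\emph{Topology.} Fix such a $\rho$, also small enough that $K_\varphi\cap\varphi^0\cap\overline{B}_\rho=\{0\}$, and set $D:=\varphi^0\cap\overline{B}_\rho\setminus\{0\}$. For $u\ne0$ the $C^1$ function $t\mapsto\varphi(tu)$ satisfies, by Claim (2), $\frac{d}{dt}\varphi(tu)=\frac1t\langle\varphi'(tu),tu\rangle>0$ at every $t$ with $\varphi(tu)=0$ and $\|tu\|_{\W1p0}\le\rho$; hence it cannot return to the value $0$ from above while staying in $\overline{B}_\rho$, so $\varphi(u)\le0$ and $\|u\|_{\W1p0}\le\rho$ force $\varphi(tu)\le0$ for all $t\in[0,1]$. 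Thus $\varphi^0\cap\overline{B}_\rho$ is star-shaped about $0$, hence contractible. Pushing each $u\in D$ outward along its ray, $h(s,u)=((1-s)+s\,t_u^+)u$ with $t_u^+:=\sup\{t\in[1,\rho/\|u\|_{\W1p0}]:\varphi(su)\le0\text{ for all }s\in[1,t]\}$, gives a strong deformation retraction of $D$ onto $E:=\{u\in D:\|u\|_{\W1p0}=\rho\text{ or }\varphi(u)=0\}$ — here Claim (2) and the implicit function theorem make $u\mapsto t_u^+$ continuous. By the star-shapedness and Claim (1) every ray from $0$ meets $E$ in exactly one point, so $u\mapsto u/\|u\|_{\W1p0}$ maps $E$ homeomorphically onto the unit sphere $\partial B_1$ of $\W1p0$, which is contractible because $\W1p0$ is infinite dimensional. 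Hence both $\varphi^0\cap\overline{B}_\rho$ and $D$ are contractible and nonempty, and the long exact homology sequence of the pair (in reduced homology) gives $H_k(\varphi^0\cap\overline{B}_\rho,D)=0$ for all $k\ge0$, i.e.\ $C_k(\varphi,0)=0$ for all $k\ge0$.

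\emph{Main difficulty.} The crux is Claim (2): membership in $\overline{B}_\rho$ controls only the Sobolev norm and not $\|u\|_\infty$, so $f(x,s)s\le\zeta F(x,s)$ is available only on $\{|u|\le\delta\}$ and the ``far'' region $\{|u|>\delta\}$ must be absorbed — which is exactly the purpose of the surplus term $\hat\eta\|\xi\|^p$ in H(a)$_2$(v) together with the strict subcriticality $r<p^*$. The continuity of $u\mapsto t_u^+$ at configurations where the ray is tangent to or exits $\partial B_\rho$ is a routine technicality handled as in the cited works.
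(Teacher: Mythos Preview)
Your proof is correct and follows essentially the same strategy as the paper's: the two analytic claims (negativity of $\varphi$ along rays near $0$, and positivity of the radial derivative along $\{\varphi=0\}\cap\overline{B}_\rho$) are exactly the paper's key steps, derived from the same inequalities in H(a)$_2$(v) and H(f)$_2$(iv); both then deduce that $\varphi^0\cap\overline{B}_\rho$ is star-shaped about $0$ and that $D=(\varphi^0\cap\overline{B}_\rho)\setminus\{0\}$ is contractible, whence $C_k(\varphi,0)=0$.

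The only difference is in how contractibility of $D$ is obtained. The paper retracts \emph{inward}: for $u\in\overline{B}_\rho\setminus\{0\}$ with $\varphi(u)>0$ it locates the unique $t(u)\in(0,1)$ with $\varphi(t(u)u)=0$ (uniqueness and continuity coming directly from Claim~(2) via the implicit function theorem), and so exhibits $D$ as a retract of the contractible set $\overline{B}_\rho\setminus\{0\}$. You instead retract \emph{outward} from $D$ onto the ``outer boundary'' $E$ and identify $E$ with $\partial B_1$. Both arguments ultimately rest on the contractibility of the unit sphere in an infinite-dimensional space, but the paper's inward map $T_2$ avoids the case analysis you flag at the end (rays that reach $\partial B_\rho$ with $\varphi<0$ versus rays that hit a zero of $\varphi$ first), so the continuity verification is cleaner there.
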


\begin{proof}
    Note that from H(f)$_2$(i) and (iv) we have
    \begin{align}\label{Sec4_22}
	F(x,s) \geq M_{23} |s|^\zeta-M_{24} |s|^{r} \quad \text{for a.a. }x \in \Omega \text{ and for all }s\in \R
    \end{align}
    with positive constants $M_{23}, M_{24}$. Recall that hypothesis H(a)$_2$(v) implies
    \begin{align}\label{Sec4_400}
	G(\xi) \leq c_7 (\|\xi\|^q+\|\xi\|^p) \quad \text{for all } \xi \in \R^N
    \end{align}
    (see also \eqref{new_estimate}). Let $u \in W^{1,p}_0(\Omega)$ and $t>0$. Combining \eqref{Sec4_22} and \eqref{Sec4_400} gives
    \begin{align*}
	\begin{split}
	    \varphi(tu)
	    & = \into G(\nabla (tu))dx-\into F(x,tu)dx\\
	    & \leq c_7 t^q \|\nabla u\|^q_{L^q(\Omega)}+c_7 t^p\|\nabla u\|_{L^p(\Omega)}^p-M_{23} t^\zeta \|u\|_{L^\zeta(\Omega)}^\zeta+M_{24} t^{r} \|u\|_{L^{r}(\Omega)}^{r}.
	\end{split}
    \end{align*}
    Since $\zeta<q< p <r$ there exists a small number $t_0>0$ such that
    \begin{align*}
	\varphi(tu)<0 \quad \text{for all } 0<t<t_0.
    \end{align*}
    Now let $u \in \W1p0$ be such that $\varphi(u)=0$. Taking into account H(a)$_2$(v), H(f)$_2$(i), (iv), and the Sobolev embedding theorem it follows
    \begin{align}\label{unclear}
      \begin{split}
	\frac{d}{dt}\varphi(tu)\bigg|_{t=1}
	& = \lan \varphi'(tu),u\ran\bigg|_{t=1}\\
	& = \into \left(a(\nabla u),\nabla u \right)_{\RN}dx -\into f(x,u)udx\\
	& \qquad -\zeta\into G(\nabla u)dx+\into \zeta F(x,u)dx\\
	& \geq \hat{\eta} \|\nabla u\|^p_{\Lp{p}}+ \into \left[\zeta F(x,u)-f(x,u)u \r]dx \\
	& \geq \hat{\eta} \|u\|^p_{\W1p0}- M_{25} \|u\|^r_{\W1p0}
      \end{split}
    \end{align}
    with some $M_{25}>0$. Since $p<r$ we can find $\varrho \in (0,1)$ small enough such that
    \begin{align}\label{Sec4_25}
	\frac{d}{dt}\varphi(tu)\bigg|_{t=1}>0 \quad \forall u \in \W1p0 \text{ with } \varphi(u)=0 \text{ and } 0<\|u\|_{\W1p0}\leq \varrho.
    \end{align}
    Now, let $u \in \W1p0$ with $0<\|u\|_{\W1p0} \leq \varrho$ and $\ph(u)=0$. In the following we are going to show that
    \begin{align}\label{Sec4_60}
	\ph(tu)\leq 0 \quad \text{for all }t \in [0,1].
    \end{align}
    Arguing indirectly, suppose that we can find a number $t_0 \in (0,1)$ such that $\ph(t_0 u)>0$. Since $\ph$ is continuous and $\ph(u)=0$ there exists $t_1 \in (t_0,1]$ such that $\ph(t_1 u)=0$. Let $t_*=\min \left \{ t \in [t_0,1]: \ph(tu)=0\right\}$. It is clear that $t_*>t_0>0$ and 
    \begin{align}\label{Sec4_61}
	\ph(tu) >0 \quad \text{for all }t \in [t_0,t_*).
    \end{align}
    
    Setting $v=t_*u$ we have $0<\|v\|_{\W1p0}\leq \|u\|_{\W1p0}\leq \varrho$ and $\ph(v)=0$. Then, \eqref{Sec4_25} gives
    \begin{align}\label{Sec4_62}
	\frac{d}{dt}\varphi(tv)\bigg|_{t=1}>0.
    \end{align}
    Moreover, from \eqref{Sec4_61} we obtain
    \begin{align*}
	\ph(v)=\ph(t_* u)=0<\ph(tu) \quad \text{for all }t \in [t_0,t_*).
    \end{align*}
    Hence,
    \begin{align}\label{Sec4_63}
	\frac{d}{dt}\varphi(tv)\bigg|_{t=1}=t_* \frac{d}{dt}\varphi(tu)\bigg|_{t=t_*}=t_* \lim_{t \to t_*^-} \frac{\ph(tu)}{t-t_*} \leq 0.
    \end{align}
    Comparing \eqref{Sec4_62} and \eqref{Sec4_63} we reach a contradiction. This proves \eqref{Sec4_60}.
    
    By taking $\varrho \in (0,1)$ even smaller if necessary we may assume that $K_\ph \cap \overline{B}_\varrho=\{0\}$ where $\overline{B}_\varrho=\l\{u \in \W1p0: \|u\|_{\W1p0} \leq \varrho\r\}$. Let $h:[0,1] \times \left(\ph^0 \cap \overline{B}_\varrho \right) \to \ph^0 \cap \overline{B}_\varrho$ be the deformation defined by
    \begin{align*}
	h(t,u)=(1-t)u.
    \end{align*}
    Thanks to \eqref{Sec4_60} we verify that this deformation is well-defined and it implies that $\ph^0 \cap \overline{B}_\varrho$ is contractible in itself.
    
    Fix $u \in \overline{B}_\varrho$ with $\ph(u)>0$. We show that there exists an unique $t(u) \in (0,1)$ such that
    \begin{align*}
	\ph(t(u)u)=0.
    \end{align*}
    
    Since $\ph(u)>0$ and the continuity of $t \mapsto \ph(tu)$, \eqref{Sec4_25} ensures the existence of such a $t(u) \in (0,1)$. It remains to show its uniqueness. Arguing by contradiction, suppose that for $0<t^*_1=t(u)_1<t^*_2=t(u)_2<1$ we have $\ph(t^*_1u)=\ph(t^*_2u)=0$. Then, \eqref{Sec4_60} implies
    \begin{align*}
	\gamma(t)=\ph(tt^*_2u) \leq 0 \quad \text{for all }t \in [0,1].
    \end{align*}
    Therefore $\frac{t^*_1}{t^*_2}\in (0,1)$ is a maximizer of $\gamma$ and thus,
    \begin{align*}
	\frac{d}{dt} \gamma(t) \bigg|_{t=\frac{t^*_1}{t^*_2}}=0,
    \end{align*}
    which implies that
    \begin{align*}
	\frac{t^*_1}{t^*_2} \frac{d}{dt} \ph(tt^*_2u)\bigg|_{t=\frac{t^*_1}{t^*_2}}=\frac{d}{dt}\ph(tt^*_1u)\bigg|_{t=1}=0.
    \end{align*}
    But this is a contradiction to \eqref{Sec4_25} and the uniqueness of $t(u) \in (0,1)$ is proved.

    This uniqueness implies that
    \begin{align*}
	\ph(tu)<0 \quad \text{if }t \in (0,t(u)) \quad \text{and} \quad \ph(tu)>0 \quad \text{for all }t \in (t(u),1].
    \end{align*}
    Let $T_1: \overline{B}_\varrho \setminus \{0\} \to (0,1]$ be defined by
    \begin{align*}
	T_1(u)=
	\begin{cases}
	    1 \quad &\text{if } \ph(u) \leq 0,\\
	    t(u) &\text{if }\ph(u)>0.
	\end{cases}
    \end{align*}
    It is easy to check that $T_1$ is continuous. Next, we consider a map $T_2: \overline{B}_\varrho \setminus\{0\}\to \left(\ph^0 \cap \overline{B}_\varrho\right)\setminus\{0\}$ defined by
    \begin{align*}
	T_2(u)=
	\begin{cases}
	    u \quad &\text{if } \ph(u) \leq 0,\\
	    T_1(u)u &\text{if }\ph(u)>0.
	\end{cases}
    \end{align*}
    Obviously, $T_2$ is a continuous function. We observe that
    \begin{align*}
	T_2\bigg|_{\left(\ph^0 \cap \overline{B}_\varrho\right)\setminus\{0\}}=\id\bigg|_{\left(\ph^0 \cap \overline{B}_\varrho\right)\setminus\{0\}},
    \end{align*}
    which proves that $\left(\ph^0 \cap \overline{B}_\varrho\right)\setminus\{0\}$ is a retract of $\overline{B}_\varrho \setminus\{0\}$. Note that $\overline{B}_\varrho\setminus\{0\}$ is contractible in itself. Therefore, the same is true for $\left(\ph^0 \cap \overline{B}_\varrho\right)\setminus\{0\}$. Previously, we proved that $\ph^0 \cap \overline{B}_\varrho$ is contractible in itself. From Granas-Dugundji \cite[p. 389]{2003-Granas-Dugundji} it follows that
    \begin{align*}
	H_k\left(\ph^0 \cap \overline{B}_\varrho,\left(\ph^0 \cap \overline{B}_\varrho\right)\setminus\{0\}\right)=0 \quad \text{for all }k \geq 0.
    \end{align*}
    Hence,
    \begin{align*}
	C_k(\ph,0)=0 \quad \text{for all }k \geq 0.
    \end{align*}
    (see Section \ref{section_hypotheses}). This completes the proof.
\end{proof}

Thanks to Proposition \ref{proposition_Sec4_phi_zero} we can now establish the existence of extremal nontrivial constant sign solutions, that means, we will produce the smallest nontrivial positive solution and the greatest nontrivial negative solution of \eqref{problem}.

To this end, let $\mathcal{S}_+$ (resp. $\mathcal{S}_-$) be the set of all nontrivial positive (resp. negative) solutions of problem \eqref{problem}. As in Filippakis-Krist{\'a}ly-Papageorgiou \cite{2009-Filippakis-Kristaly-Papageorgiou}
we can show that
\begin{enumerate}
    \item[$\bullet$]
	$\mathcal{S}_+$ is downward directed, that means, if $u_1, u_2 \in \mathcal{S}_+$, then there exists $u\in \mathcal{S}_+$ such that $u \leq u_1$ and $u \leq u_2$.
    \item[$\bullet$]
	$\mathcal{S}_-$ is upward directed, that means, if $v_1, v_2 \in \mathcal{S}_-$, then there exists $v\in \mathcal{S}_-$ such that $v_1 \leq v$ and $v_2 \leq v$.
\end{enumerate}

By virtue of these lattice properties of $\mathcal{S}_+$ and $\mathcal{S}_-$ we see that for the purpose of producing extremal nontrivial constant sign solutions and since $\mathcal{S}_+ \subseteq \interior, \mathcal{S}_- \subseteq - \interior$, without any loss of generality, we may assume that there exists $M_{26}>0$ such that
\begin{align}\label{Sec4_67}
    \|u\|_{C(\close)} \leq M_{26} \quad \text{for all } u \in \mathcal{S}_+ \quad \text{and} \quad \|v\|_{C(\close)} \leq M_{26} \quad \text{for all } v \in \mathcal{S}_-.
\end{align}

Note that from hypotheses H(f)$_2$(i) and (iv) we find positive constants $a_1, a_2$ such that
\begin{align}\label{Sec4_31}
	f(x,s)s \geq a_1 |s|^\zeta-a_2 |s|^{r} \quad \text{for a.a. }x \in \Omega \text{ and for all }s\in \R.
\end{align}

This unilateral growth estimate leads to the following auxiliary Dirichlet problem
\begin{equation}\label{problem_aux}
    \begin{aligned}
      -\divergenz a(\nabla u(x)) & = a_1 |u|^{\zeta-2}u-a_2|u|^{r-2}u \quad && \text{in } \Omega,\\
       u & = 0  &&\text{on } \partial \Omega.
    \end{aligned}
\end{equation}
We are going to prove the uniqueness of constant sign solutions of \eqref{problem_aux}.

\begin{proposition}\label{proposition_Sec4_aux_problem_uniqueness}
    If hypotheses H(a)$_2$ hold, then problem \eqref{problem_aux} admits a unique nontrivial positive solution $u_* \in \interior$ and since \eqref{problem_aux} is odd, $v_*=-u_* \in \interior$ is the unique nontrivial negative solution of \eqref{problem_aux}.
\end{proposition}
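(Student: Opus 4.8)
\emph{Proof strategy.} The plan is to get existence by a direct-method/truncation argument and uniqueness by a generalized Díaz--Sa\'a argument, the latter being exactly what hypothesis H(a)$_2$(v) is designed for. For \emph{existence} I would minimize the $C^1$-functional
\[ \mathcal{E}_+(u) = \into G(\nabla u)\,dx - \frac{a_1}{\zeta}\into (u^+)^\zeta\,dx + \frac{a_2}{r}\into (u^+)^r\,dx \]
on $\W1p0$. By Corollary \ref{corollary_upper_lower_estimates} and $\zeta<p$ (the last term being nonnegative) it is coercive, and by the compact embeddings $\W1p0 \hookrightarrow L^\zeta(\Omega), L^r(\Omega)$ it is sequentially weakly lower semicontinuous, so it has a global minimizer $u_*$. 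Evaluating $\mathcal{E}_+$ at $t\hat{u}_1(q)$ for small $t>0$ and using $\hat{u}_1(q)\in\interior$, the bound $G(\xi)\leq\beta\|\xi\|^q$ for small $\|\xi\|$ from H(a)$_2$(v) (cf.\ \eqref{new_estimate} and the proof of Proposition \ref{proposition_Sec4_two_constant_sign}) together with $\zeta<q<r$ yields $\mathcal{E}_+(t\hat{u}_1(q))<0=\mathcal{E}_+(0)$, hence $u_*\neq0$. Testing $\mathcal{E}_+'(u_*)=0$ with $-u_*^-$ and invoking Lemma \ref{lemma_properties}(c) shows $u_*\geq0$, so $u_*$ solves \eqref{problem_aux}; the nonlinear regularity theory (Ladyzhenskaya--Ural{\cprime}tseva \cite{1968-Ladyzhenskaya-Ural'tseva}, Lieberman \cite{1991-Lieberman}) gives $u_*\in C^1_0(\close)$, and --- choosing $\kappa$ large (possible since $u_*$ is bounded and $r>p$) and using that H(a)$_2$(v) gives $t^2a_0(t)-G_0(t)\geq t^2a_0(t)-qG_0(t)\geq\hat{\eta}t^p$ --- the strong maximum principle and boundary point lemma of Pucci--Serrin \cite{2007-Pucci-Serrin} (as in Proposition \ref{proposition_Sec4_two_constant_sign}) give $u_*\in\interior$. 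Since the right-hand side of \eqref{problem_aux} and $a(\cdot)$ are odd, $v_*:=-u_*\in-\interior$ solves \eqref{problem_aux}, and any negative solution $v$ produces the positive solution $-v$, so uniqueness of the negative solution reduces to the positive case.

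For \emph{uniqueness}, let $u_1,u_2\in\interior$ both solve \eqref{problem_aux} and set $w_i:=u_i^q$, $w_t:=(1-t)w_1+tw_2$ for $t\in[0,1]$. Because $u_1,u_2\in\interior$, the quotients $u_1/u_2$ and $u_2/u_1$ are bounded (both $u_i$ behave like $\dist(\cdot,\partial\Omega)$ near $\partial\Omega$, with non-vanishing inner normal derivatives); a direct computation then shows $w_t^{1/q}\in\W1p0$ and $u_i^{1-q}(w_2-w_1)\in\W1p0$, so these are admissible test functions. By H(a)$_2$(v), $t\mapsto G_0(t^{1/q})$ is convex, and the generalized Díaz--Sa\'a inequality then gives that
\[ [0,1]\ni t\longmapsto j(w_t),\qquad j(w):=\into G(\nabla w^{1/q})\,dx, \]
is convex and finite; in particular its one-sided derivatives satisfy $\frac{d}{dt}j(w_t)\big|_{t=0^+}\leq\frac{d}{dt}j(w_t)\big|_{t=1^-}$.

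For the \emph{conclusion}, differentiating under the integral and integrating by parts through the weak formulation $\langle A(u_i),v\rangle=\into(a_1u_i^{\zeta-1}-a_2u_i^{r-1})v\,dx$, then rewriting everything in terms of $w_i$, I expect
\[ \frac{d}{dt}j(w_t)\Big|_{t=0^+}=\frac{1}{q}\into\Phi(w_1)(w_2-w_1)\,dx,\qquad \frac{d}{dt}j(w_t)\Big|_{t=1^-}=\frac{1}{q}\into\Phi(w_2)(w_2-w_1)\,dx, \]
where $\Phi(s)=a_1s^{\frac{\zeta}{q}-1}-a_2s^{\frac{r}{q}-1}$ for $s>0$. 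The convexity inequality then becomes $\into[\Phi(w_2)-\Phi(w_1)](w_2-w_1)\,dx\geq0$. On the other hand, since $\zeta<q<p<r$ the exponents $\frac{\zeta}{q}-1<0$ and $\frac{r}{q}-1>0$ make $\Phi$ strictly decreasing on $(0,+\infty)$, so $[\Phi(w_2)-\Phi(w_1)](w_2-w_1)\leq0$ pointwise, with equality only where $w_1=w_2$. Hence $w_1=w_2$ a.e., i.e.\ $u_1=u_2$, which is the asserted uniqueness; by oddness $v_*=-u_*$ is the unique negative solution.

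\emph{Main obstacle.} The direct method and the final monotonicity argument are routine. The delicate point is the analytic bookkeeping in the uniqueness step: that $w_t^{1/q}$ and the test functions $u_i^{1-q}(w_2-w_1)$ genuinely lie in $\W1p0$, that $t\mapsto j(w_t)$ is finite, convex and admits the claimed one-sided derivatives, and that the integrations by parts are legitimate. All of this hinges on the fine boundary behaviour of functions in $\interior$ (comparability with the distance function and the Hopf property), i.e.\ on the generalized Díaz--Sa\'a lemma, for which H(a)$_2$(v) is precisely the hypothesis needed.
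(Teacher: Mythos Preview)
Your proposal is correct and follows essentially the same route as the paper: existence via direct minimization of the truncated functional $\mathcal{E}_+$ (the paper's $\psi_+$) with nontriviality from $\zeta<q$, positivity from testing with $-u_*^-$, and $u_*\in\interior$ via regularity plus Pucci--Serrin; uniqueness via the D\'{\i}az--Sa\'a convexity of $w\mapsto\into G(\nabla w^{1/q})\,dx$ (enabled by H(a)$_2$(v)) together with the strict monotonicity of $s\mapsto a_1 s^{\zeta-q}-a_2 s^{r-q}$. The only cosmetic difference is that you phrase the convexity step through one-sided derivatives along the segment $w_t$, whereas the paper uses the G\^ateaux derivative of $\Psi_+$ and monotonicity of $\Psi_+'$ directly---these are equivalent formulations of the same argument.
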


\begin{proof}
    Let $\psi_+: \W1p0 \to \R$ be the $C^1$-functional defined by
    \begin{align*}
	\psi_+(u) = \int_\Omega G(\nabla u(x)) dx - \frac{a_1}{\zeta} \l\|u^+\r\|^\zeta_{\Lp{\zeta}}+\frac{a_2}{\hat{r}} \l\|u^+\r\|^{r}_{\Lp{r}}.
    \end{align*}
    Because of Corollary \ref{corollary_upper_lower_estimates} and due to $\zeta < p <r$ we observe that $\psi_+$ is coercive and in addition sequentially weakly lower semicontinuous. Then we find $u_* \in \W1p0$ such that
    \begin{align*}
	\psi_+(u_*)=\inf \left[\psi_+(u): u \in W^{1,p}_0(\Omega) \right]<0=\psi_+(0),
    \end{align*}
    since $\zeta<p<r$ (see the proof of Proposition \ref{proposition_Sec4_two_constant_sign}). Hence, $u_* \neq 0$. Moreover, as $u_*$ is the global minimizer of $\psi_+$ it holds $\left(\psi_+ \right)'(u_*)=0$ which means
    \begin{align}\label{Sec4_33}
	A(u_*)= a_1 \left(u^+_* \right)^{\zeta-1}-a_2 \left(u^+_* \right)^{r-1}.
    \end{align}
    Acting on (\ref{Sec4_33}) with $-u^-_* \in W^{1,p}_0(\Omega)$ and using Lemma \ref{lemma_properties}(c), we see that $u_* \geq 0$ and as before $u_* \neq 0$. Then, equation  \eqref{Sec4_33} becomes
    \begin{align*}
	A(u_*)=a_1u_*^{\zeta-1}-a_2 u_*^{r-1}
    \end{align*}
    and $u_*$ turns out to be a nontrivial positive solution of \eqref{problem_aux}. As before, the nonlinear regularity theory (see \cite{1968-Ladyzhenskaya-Ural'tseva}, \cite{1991-Lieberman}) implies that $u_* \in C^1_0(\close)$ and the nonlinear maximum principle of Pucci-Serrin \cite[pp. 111 and 120]{2007-Pucci-Serrin} yields that $u_* \in \interior$.

    We will complete the proof of the proposition if we prove the uniqueness of this solution $u_*$.
    To this end, let $\Psi_+ : L^1(\Omega) \to \R \cup \{\infty\}$ be the integral functional defined by
    \begin{align*}
	\Psi_+(u)=
	\begin{cases}
	    \displaystyle\int_\Omega G\left( \nabla u^{\frac{1}{q}}\right) dx \quad & \text{if } u \geq 0, u^{\frac{1}{q}} \in W^{1,p}_0(\Omega),\\
	    +\infty & \text{otherwise}.
	\end{cases}
    \end{align*}
    Take $u_1, u_2 \in \dom \Psi_+$ and let $u=(tu_1+(1-t)u_2)^{\frac{1}{q}}$ for $t \in [0,1]$. Applying Lemma 1 of D{\'{\i}}az-Sa{\'a} \cite{1987-Diaz-Saa} results in
    \begin{align*}
	\|\nabla u(x)\|_{\W1p0} \leq \left(t \left\|\nabla u_1(x)^{\frac{1}{q}}\right\|_{\W1p0}^q+(1-t) \left\|\nabla u_2(x)^{\frac{1}{q}}\right\|_{\W1p0}^{q} \right)^{\frac{1}{q}} \quad \text{a.e. in }\Omega.
    \end{align*}
    As $G_0$ is increasing and by means of H(a)$_2$(v) we conclude
    \begin{align*}
	& G_0\left(\left\|\nabla u(x)\right\|_{\W1p0} \right)\\
	& \leq G_0 \left(\left(t \left\|\nabla u_1(x)^{\frac{1}{q}}\right\|_{\W1p0}^q+(1-t) \left\|\nabla u_2(x)^{\frac{1}{q}}\right\|_{\W1p0}^{q} \right)^{\frac{1}{q}} \right)\\
	& \leq t G_0 \left( \left\|\nabla u_1(x)^{\frac{1}{q}}\right\|_{\W1p0}\right)+(1-t) G_0 \left( \left\|\nabla u_2(x)^{\frac{1}{q}}\right\|_{\W1p0} \right) \quad \text{a.e. in }\Omega.
    \end{align*}
    Note that by definition $G(\xi)=G_0(\|\xi\|)$ for all $\xi \in \R^N$. Hence
    \begin{align*}
	G(\nabla u(x)) \leq t G\left(\nabla u_1(x)^{\frac{1}{q}}\right) +(1-t) G\left(\nabla u_2(x)^{\frac{1}{q}}\right) \quad \text{a.e. in }\Omega,
    \end{align*}
    which proves that $\Psi_+$ is convex.

    Now we take two nontrivial positive solutions $v, w \in W^{1,p}_0(\Omega)$ of \eqref{problem_aux}. As mentioned before we know that $v,w$ belong to $\interior$. Therefore, $v, w \in \dom \Psi_+$. For $t \in (0,1)$ sufficiently small and $h \in C^1_0(\overline{\Omega})$ we have $v+th, w+th \in \dom \Psi_+$. Hence, $\Psi_+$ is Gateaux differentiable at $v$ and $w$ in the direction $h$. Furthermore, the chain rule yields
    \begin{align}
	& \Psi'_+\left(v^q\right)(h)=\frac{1}{q}\int_\Omega \frac{-\divergenz a(\nabla v)}{v^{q-1}}h dx \label{Sec4_34},\\
	& \Psi'_+\left(w^q\right)(h)=\frac{1}{q}\int_\Omega \frac{-\divergenz a(\nabla w)}{w^{q-1}}h dx. \label{Sec4_35}
    \end{align}
    Note that $\Psi_+'$ is monotone since $\Psi_+$ is convex. Then, from \eqref{Sec4_34} and \eqref{Sec4_35}, we derive
    \begin{align*}
	0
	& \leq \left \lan \Psi'_+\left(v^q \right)-\Psi'_+ \left(w^q\right),v^q-w^q \right \ran_{L^1(\Omega)}\\
	& = \frac{1}{q}\int_\Omega \left(\frac{-\divergenz a(\nabla v)}{v^{q-1}}+\frac{\divergenz a(\nabla w)}{w^{q-1}} \right) \left(v^q-w^q \right) dx\\
	& = \frac{1}{q}\int_\Omega \left(\frac{a_1 v^{\zeta-1}-a_2v^{r-1} }{v^{q-1}}- \frac{a_1w^{\zeta-1}-a_2 w^{r-1}}{w^{q-1}} \right)\left(v^q-w^q \right) dx\\
	& = \frac{a_1}{q} \into \left(\frac{1}{v^{q-\zeta}}-\frac{1}{w^{q-\zeta}} \right) \left(v^q-w^q \right)dx+\frac{a_2}{q} \into \left(w^{r-q}-v^{r-q} \right) \left(v^q-w^q \right)dx.
    \end{align*}
    Since $s \mapsto \frac{1}{s^{q-\zeta}}-s^{r-q}$ is strictly decreasing in $(0,\infty)$ we conclude that $v=w$ and thus, $u_* \in \interior$ is the unique nontrivial positive solution of \eqref{problem_aux}. Obviously, $v_*=-u_* \in - \interior$ is the unique nontrivial negative solution of \eqref{problem_aux}.
\end{proof}

\begin{proposition}\label{proposition_auxiliary}
    If hypotheses H(a)$_2$ and H(f)$_2$ hold, then $u_* \leq u$ for all $u \in \mathcal{S}_+$ and $v \leq v_*$ for all $v \in \mathcal{S}_-$ with $u_*, v_*$ being the nontrivial unique constant sign solutions of problem \eqref{problem_aux} obtained in Proposition \ref{proposition_Sec4_aux_problem_uniqueness}.
\end{proposition}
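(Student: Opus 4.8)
The plan is to construct, for each fixed $u\in\mathcal{S}_+$, a nontrivial nonnegative solution of the auxiliary problem \eqref{problem_aux} lying below $u$, and then to invoke the uniqueness established in Proposition~\ref{proposition_Sec4_aux_problem_uniqueness}. First I would record that $u$ is a \emph{supersolution} of \eqref{problem_aux}: since $\mathcal{S}_+\subseteq\interior$ we have $u(x)>0$ for all $x\in\Omega$, and \eqref{Sec4_31} together with $-\divergenz a(\nabla u)=f(x,u)$ in $\Omega$ gives
\begin{align*}
    \langle A(u),\varphi\rangle=\into f(x,u)\varphi\,dx\ge\into\l(a_1 u^{\zeta-1}-a_2 u^{r-1}\r)\varphi\,dx \quad \text{for all }\varphi\in\W1p0,\ \varphi\ge 0.
\end{align*}

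Next I would introduce the truncation $g_+:\Omega\times\R\to\R$ given by $g_+(x,s)=0$ for $s<0$, $g_+(x,s)=a_1 s^{\zeta-1}-a_2 s^{r-1}$ for $0\le s\le u(x)$, and $g_+(x,s)=a_1 u(x)^{\zeta-1}-a_2 u(x)^{r-1}$ for $s>u(x)$, which is a Carath\'eodory function, together with the $C^1$-functional $\psi_+(w)=\into G(\nabla w)\,dx-\into G_+(x,w)\,dx$, where $G_+(x,s)=\int_0^s g_+(x,t)\,dt$. By Corollary~\ref{corollary_upper_lower_estimates} and the boundedness of $g_+(x,\cdot)$, $\psi_+$ is coercive and sequentially weakly lower semicontinuous, hence admits a global minimizer $\tilde u\in\W1p0$. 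To see $\tilde u\neq 0$, I would test $\psi_+$ on $t\hat{u}_1(q)$: since $u,\hat{u}_1(q)\in\interior$, for $t>0$ small we have $0\le t\hat{u}_1(q)(x)\le u(x)$ on $\close$, so that $G_+(x,t\hat{u}_1(q))=\frac{a_1}{\zeta}t^\zeta\hat{u}_1(q)^\zeta-\frac{a_2}{r}t^r\hat{u}_1(q)^r$, while H(a)$_2$(v) (cf. \eqref{Sec4_4}) gives $G(t\nabla\hat{u}_1(q))\le\beta t^q\|\nabla\hat{u}_1(q)\|^q$; using $\|\hat{u}_1(q)\|_{\Lp{q}}=1$ one obtains
\begin{align*}
    \psi_+(t\hat{u}_1(q))\le\beta t^q\hat{\lambda}_1(q)-\frac{a_1}{\zeta}t^\zeta\l\|\hat{u}_1(q)\r\|_{\Lp{\zeta}}^\zeta+\frac{a_2}{r}t^r\l\|\hat{u}_1(q)\r\|_{\Lp{r}}^r<0
\end{align*}
for $t$ small, because $\zeta<q<r$. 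Hence $\psi_+(\tilde u)<0=\psi_+(0)$ and $\tilde u\neq 0$.

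Then I would localize $\tilde u$. From $(\psi_+)'(\tilde u)=0$, i.e. $A(\tilde u)=N_{g_+}(\tilde u)$, testing with $-\tilde u^-\in\W1p0$ and using Lemma~\ref{lemma_properties}(c) yields $\l\|\nabla\tilde u^-\r\|_{\Lp{p}}=0$, so $\tilde u\ge 0$. Testing with $(\tilde u-u)^+\in\W1p0$ and using that on $\{\tilde u>u\}$ one has $g_+(x,\tilde u)=a_1 u^{\zeta-1}-a_2 u^{r-1}\le f(x,u)$ (the supersolution inequality, which uses $u>0$), I obtain
\begin{align*}
    \langle A(\tilde u)-A(u),(\tilde u-u)^+\rangle\le\int_{\{\tilde u>u\}}\l[\l(a_1 u^{\zeta-1}-a_2 u^{r-1}\r)-f(x,u)\r](\tilde u-u)\,dx\le 0,
\end{align*}
and strict monotonicity of $A$ (Lemma~\ref{lemma_properties}(a), Proposition~\ref{proposition_basic_properties}) forces $|\{\tilde u>u\}|_N=0$, i.e. $\tilde u\le u$. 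Now $0\le\tilde u\le u$ and $\tilde u\neq 0$ imply that the equation $A(\tilde u)=N_{g_+}(\tilde u)$ reduces to $A(\tilde u)=a_1\tilde u^{\zeta-1}-a_2\tilde u^{r-1}$, so $\tilde u$ is a nontrivial positive solution of \eqref{problem_aux}; the nonlinear regularity theory and the maximum principle of Pucci--Serrin give $\tilde u\in\interior$. By Proposition~\ref{proposition_Sec4_aux_problem_uniqueness}, $\tilde u=u_*$, whence $u_*=\tilde u\le u$. Since $u\in\mathcal{S}_+$ was arbitrary, $u_*\le u$ for all $u\in\mathcal{S}_+$; the estimate $v\le v_*$ for all $v\in\mathcal{S}_-$ follows by the symmetric argument, truncating between $v(x)$ and $0$ and using that \eqref{problem_aux} is odd. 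I expect the comparison step to be the main obstacle: one must set up the truncation so that the capped nonlinearity agrees on $\{\tilde u>u\}$ with the data making $u$ a supersolution and verify the orientation of the unilateral estimate \eqref{Sec4_31} — precisely where the positivity $u\in\interior$ is used. The remaining ingredients (coercivity and weak lower semicontinuity of $\psi_+$, the sign of $\psi_+$ on the ray through $\hat{u}_1(q)$ via $\zeta<q$, and the reduction to uniqueness) are routine consequences of the earlier results.
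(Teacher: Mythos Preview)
Your proposal is correct and follows essentially the same approach as the paper: truncate the auxiliary nonlinearity at $u(x)$, minimize the resulting coercive functional, show the minimizer is nontrivial via the ray $t\hat{u}_1(q)$ and $\zeta<q$, localize it in $[0,u]$ by testing with $-\tilde u^-$ and $(\tilde u-u)^+$ (using \eqref{Sec4_31} and $u\in\mathcal{S}_+$), and conclude by uniqueness. Your added remark that $u\in\interior$ is what guarantees $t\hat{u}_1(q)\le u$ for small $t$ is a helpful clarification the paper leaves implicit.
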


\begin{proof}
    Let $u \in \mathcal{S}_+$ and consider the Carath\'{e}odory function
    \begin{align}\label{Sec4_36}
	\vartheta_+(x,s)=
	\begin{cases}
	    0 \qquad & \text{if } s<0,\\
	    a_1s^{\zeta-1}-a_2 s^{r-1} & \text{if } 0 \leq s \leq u(x),\\
	    a_1 u(x)^{\zeta-1}-a_2u(x)^{r-1} & \text{if } u(x)<s.
	\end{cases}
    \end{align}
    We consider the $C^1$-functional $\Phi_+ : \W1p0 \to \R$ defined by
    \begin{align*}
	\Phi_+(u)=\int_\Omega G(\nabla u(x))dx-\int_\Omega \Theta_+(x,u(x))dx
    \end{align*}
    with $\Theta_+(x,s)=\int_0^s \vartheta_+(x,t)dt$. By means of the truncation it is clear that $\Phi_+$ is coercive and since it is also sequentially weakly lower semicontinuous there exists an element $\hat{u}_* \in W^{1,p}_0(\Omega)$ such that
    \begin{align*}
	\Phi_+(\hat{u}_*)=\inf \left[\Phi_+(u):u \in W^{1,p}_0(\Omega) \right ] <0=\Phi_+(0).
    \end{align*}
    as before since $\zeta<p<r$ (see the proof of Proposition \ref{proposition_Sec4_two_constant_sign}). Hence, $\hat{u}_* \neq 0$. Since $\hat{u}_*$ is a critical point of $\Phi_+$, we have
    \begin{align}\label{Sec4_37}
	A\left(\hat{u}_*\right)=N_{\vartheta_+}\l(\hat{u}_*\r).
    \end{align}
    Acting of \eqref{Sec4_37} with $-\hat{u}_*^- \in W^{1,p}_0(\Omega)$ we derive by applying Lemma \ref{lemma_properties}(c) that $\hat{u} \geq 0$. On the other side, acting with $\left(\hat{u}_*-u \right)^+ \in W^{1,p}_0(\Omega)$ on \eqref{Sec4_37}, there holds thanks to \eqref{Sec4_36}, \eqref{Sec4_31} and $u \in \mathcal{S}_+$,
    \begin{align*}
	\left \lan A \left(\hat{u}_* \right), \left(\hat{u}_*-u \right)^+ \right \ran
	& = \int_\Omega \vartheta_+\left(x,\hat{u}_* \right)\left(\hat{u}_*-u \right)^+  dx\\
	& = \int_\Omega \left(a_1 u^{\zeta-1}-a_2 u^{r-1} \right) \left(\hat{u}_*-u \right)^+dx\\
	& \leq \int_\Omega f(x,u)\left(\hat{u}_*-u \right)^+dx\\
	& = \left \lan A \left( u \right), \left(\hat{u}_*-u \right)^+ \right \ran.
    \end{align*}
    This gives
    \begin{align*}
	\int_{\{\hat{u}_* > u\}} \left(a\left(\nabla \hat{u}_*\right)-a\left(\nabla u\right),\nabla \hat{u}_*-\nabla u \right)_{\R^N}dx \leq 0.
    \end{align*}
    Since $a$ is strictly monotone (see Lemma \ref{lemma_properties}(a)) we obtain $\l|\{\hat{u}_* > u\} \r|_N=0$. To sum up, we have
    \begin{align*}
	0 \neq \hat{u}_* \in \left[0, u \right]= \left \{v \in W^{1,p}_0(\Omega): 0 \leq v(x) \leq u(x) \text{ a.e. in } \Omega \right\}.
    \end{align*}
    By definition of the truncation in \eqref{Sec4_36} it follows $\vartheta_+(x,\hat{u}_*)=a_1\hat{u}_*^{\zeta-1}-a_2 \hat{u}_*^{r-1}$. Therefore, $\hat{u}_*$ solves the auxiliary problem \eqref{problem_aux} but Proposition \ref{proposition_Sec4_aux_problem_uniqueness} proved the uniqueness of constant sign solutions of \eqref{problem_aux}. We deduce that $\hat{u}_*=u_*\in \interior$ and $u_* \leq u$. Since $u \in \mathcal{S}_+$ was arbitrary we deduce that
    \begin{align*}
	u_* \leq u \quad \text{for all } u \in \mathcal{S}_+.
    \end{align*}
    Similarly, we prove that $v \leq v_*$ for all $v \in \mathcal{S}_-$.
\end{proof}

Now we are ready to produce extremal nontrivial constant sign solutions of our original problem \eqref{problem}.

\begin{proposition}\label{proposition_Sec4_extremal_solutions}
    Under the assumption H(a)$_2$ and H(f)$_2$ problem \eqref{problem} possesses a smallest positive solution $u_+ \in \interior$ and a greatest negative solution $v_- \in - \interior$.
\end{proposition}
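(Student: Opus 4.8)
The plan is to obtain $u_+$ as the limit of a carefully chosen decreasing sequence in $\mathcal{S}_+$, using the a priori bound \eqref{Sec4_67} for compactness and the lower bound of Proposition \ref{proposition_auxiliary} to guarantee nontriviality of the limit. First I would exploit the separability of $\W1p0$ to pick a countable set $\{v_n\}_{n \geq 1}$ dense in $\mathcal{S}_+$, and then, invoking the fact that $\mathcal{S}_+$ is downward directed, construct inductively a decreasing sequence $(u_n)_{n \geq 1} \subseteq \mathcal{S}_+$ with $u_{n+1} \leq \min\{u_n, v_{n+1}\}$ for all $n$. By density and monotonicity this sequence satisfies $\inf_{n \geq 1} u_n(x) = \inf_{u \in \mathcal{S}_+} u(x)$ for a.a.\ $x \in \Omega$.

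Next I would derive the compactness. By \eqref{Sec4_67} we have $\|u_n\|_{C(\close)} \leq M_{26}$ for all $n$; since each $u_n \in \mathcal{S}_+$ solves $-\divergenz a(\nabla u_n) = f(x,u_n)$ in $\Omega$, testing with $u_n \in \W1p0$ and using Lemma \ref{lemma_properties}(c) together with H(f)$_2$(i) and the $L^\infty$-bound shows that $(u_n)_{n \geq 1}$ is bounded in $\W1p0$. Passing to a subsequence, $u_n \weak u_+$ in $\W1p0$ and, by the monotone pointwise convergence and dominated convergence (using \eqref{Sec4_67}), $u_n \to u_+$ in $\Lp{p}$ and a.e.\ in $\Omega$. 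Testing the weak formulation with $u_n - u_+ \in \W1p0$ and using that $N_f(u_n)$ is bounded in $\Linf$, the right-hand side tends to $0$, so $\limsup_{n \to \infty}\lan A(u_n), u_n - u_+\ran \leq 0$; the (S)$_+$-property of $A$ (Proposition \ref{proposition_basic_properties}) then yields $u_n \to u_+$ in $\W1p0$, and passing to the limit in the equation gives $A(u_+) = N_f(u_+)$, i.e.\ $u_+$ is a nonnegative solution of \eqref{problem}.

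To see that $u_+ \neq 0$ I would invoke Proposition \ref{proposition_auxiliary}: $u_* \leq u_n$ for every $n$, hence $u_* \leq u_+$, and since $u_* \in \interior$ this forces $u_+ \neq 0$, so $u_+ \in \mathcal{S}_+$. Then, exactly as in the proof of Proposition \ref{proposition_Sec4_two_constant_sign}, the nonlinear regularity theory (\cite{1968-Ladyzhenskaya-Ural'tseva}, \cite{1991-Lieberman}) and the strong maximum principle of Pucci-Serrin \cite[pp.\ 111 and 120]{2007-Pucci-Serrin} give $u_+ \in \interior$. Finally, since along the chosen subsequence $u_n \to u_+$ a.e.\ and $\inf_{n} u_n(x) = \inf_{u \in \mathcal{S}_+} u(x)$, we obtain $u_+(x) = \inf_{u \in \mathcal{S}_+} u(x) \leq u(x)$ for a.a.\ $x$ and all $u \in \mathcal{S}_+$, i.e.\ $u_+$ is the smallest positive solution of \eqref{problem}. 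The greatest negative solution $v_- \in -\interior$ is produced by the symmetric argument, using that $\mathcal{S}_-$ is upward directed and the bound $v \leq v_*$ from Proposition \ref{proposition_auxiliary}. The main obstacle I anticipate is the passage to a strongly convergent subsequence in $\W1p0$ — i.e.\ combining the uniform $C(\close)$-bound \eqref{Sec4_67} with the (S)$_+$-property correctly — together with making the pointwise-infimum selection rigorous via separability and the downward directedness of $\mathcal{S}_+$.
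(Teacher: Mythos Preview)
Your proposal is correct but takes a different route from the paper. The paper invokes the Kuratowski--Zorn Lemma: for an arbitrary chain $\mathcal{C}\subseteq\mathcal{S}_+$ one selects (via Dunford--Schwartz) a sequence $(u_n)_{n\geq 1}\subseteq\mathcal{S}_+$ with $\inf\mathcal{C}=\inf_{n\geq 1} u_n$, then uses exactly the compactness and (S)$_+$ argument you describe to show the limit lies in $\mathcal{S}_+$; thus every chain has a lower bound in $\mathcal{S}_+$, Zorn gives a minimal element, and downward directedness upgrades ``minimal'' to ``smallest''. Your approach is more direct and constructive: separability plus downward directedness produces a single decreasing sequence whose limit is the desired smallest element, bypassing Zorn entirely. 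The analytic core --- boundedness from \eqref{Sec4_67}, strong convergence via the (S)$_+$-property, nontriviality from the lower barrier $u_*$ of Proposition \ref{proposition_auxiliary} --- is identical in both. One small point of care: your intermediate assertion $\inf_{n} u_n(x)=\inf_{u\in\mathcal{S}_+}u(x)$ a.e.\ involves a pointwise infimum over an uncountable family (and a union of exceptional null sets); it is cleaner simply to argue, for each fixed $u\in\mathcal{S}_+$, that $u_+\leq u$ a.e.\ (pick $v_{n_k}\to u$ in $\W1p0$, hence a.e.\ along a further subsequence, and use $u_{n_k}\leq v_{n_k}$), which is all that is actually needed.
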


\begin{proof}
    Let $\mathcal{C} \subseteq \mathcal{S}_+$ be a chain, i.e., a totally ordered subset of $\mathcal{S}_+$. Then there is a sequence  $(u_n)_{n \geq 1} \subseteq \mathcal{S}_+$ such that
    \begin{align*}
	\inf \mathcal{C}= \inf_{n \geq 1} u_n.
    \end{align*}
    (see Dunford-Schwartz \cite[p. 336]{1958-Dunford-Schwartz}). Since $u_n \in \mathcal{S}_+$ we have
    \begin{align}\label{Sec4_39}
	A(u_n)= N_f(u_n) \quad\text{ for all }n \geq 1.
    \end{align}
    Therefore, thanks to \eqref{Sec4_67}, H(f)$_2$(i) and Lemma \ref{lemma_properties}, we observe that $(u_n)_{n\geq 1}\subseteq W^{1,p}_0(\Omega)$ is bounded and we may assume that
    \begin{align}\label{Sec4_40}
	u_n \weak u \text{ in } W^{1,p}_0(\Omega) \quad \text{ and } \quad u_n \to u \text{ in } L^p(\Omega).
    \end{align}
    Acting on \eqref{Sec4_39} with $u_n-u \in W^{1,p}_0(\Omega)$ and making use of \eqref{Sec4_40} yields
    \begin{align*}
	\lim_{n \to \infty} \left \lan A(u_n),u_n-u \right \ran=0.
    \end{align*}
    Therefore, the (S$_+$)-property of $A$ (see Proposition \ref{proposition_basic_properties}) gives $u_n\to u$ in $W^{1,p}_0(\Omega)$. Passing to the limit in \eqref{Sec4_39} we get
     \begin{align}\label{Sec4_68}
	A(u)= N_f(u).
    \end{align}
    Taking into account Proposition \ref{proposition_auxiliary} provides $u_* \leq u_n$ for all $n \geq 1$ which implies $u_* \leq u$ and with regard to \eqref{Sec4_68} $u \in \mathcal{S}_+$. Furthermore, we have $u=\inf \mathcal{C}$. Since $\mathcal{C}$ was arbitrarily chosen in $\mathcal{S}_+$ the Kuratowski-Zorn Lemma ensures that $\mathcal{S}_+$ has a minimal element $u_+ \in \mathcal{S}_+$. Since $\mathcal{S}_+$ is downward directed we conclude that $u_+ \in \interior$ is the smallest nontrivial positive solution of (\ref{problem}).

    Working with $\mathcal{S}_-$ instead of $\mathcal{S}_+$ and applying again the Kuratowski-Zorn Lemma, we can show that $v_- \in -\interior$ is the greatest nontrivial negative solution of (\ref{problem}). Recall that $\mathcal{S}_-$ is upward directed.
\end{proof}

Having these extremal nontrivial constant sign solutions, we are now in the position to produce a nodal (sign changing) solution of problem \eqref{problem}.

\begin{proposition}\label{proposition_Sec4_nodal_solution}
    Let H(a)$_2$ and H(f)$_2$ be satisfied. Then problem \eqref{problem} has a nodal solution $y_0 \in [v_-,u_+]\cap C^1_0(\close)$.
\end{proposition}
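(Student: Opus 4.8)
The plan is to trap the critical points of a coercive truncation of $\varphi$ inside the order interval $[v_-,u_+]$, identify the ``obvious'' critical points with $0,u_+,v_-$, and then force a fourth one by a Morse-theoretic count using the critical groups already computed. Concretely, I would introduce the Carath\'eodory function $g(x,s)$ equal to $f(x,v_-(x))$ for $s<v_-(x)$, to $f(x,s)$ for $v_-(x)\le s\le u_+(x)$, and to $f(x,u_+(x))$ for $s>u_+(x)$, together with the one-sided truncations $g_\pm(x,s)=g(x,\pm s^\pm)$; denoting by $\widehat G,\widehat G_\pm$ the primitives of $g,g_\pm$ in the second variable, set
\[
\psi(u)=\into G(\nabla u)\,dx-\into\widehat G(x,u)\,dx,\qquad \psi_\pm(u)=\into G(\nabla u)\,dx-\into\widehat G_\pm(x,u)\,dx .
\]
These are $C^1$-functionals on $\W1p0$; since $v_-,u_+\in\Linf$ the truncated reaction $g(x,\cdot)$ is bounded modulo a linearly growing term, so Corollary \ref{corollary_upper_lower_estimates} makes $\psi,\psi_\pm$ coercive. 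Being also sequentially weakly lower semicontinuous they are bounded below and satisfy the $C$-condition (via the $(S)_+$-property of $A$, Proposition \ref{proposition_basic_properties}), whence $C_k(\psi,\infty)=\delta_{k,0}\Z$ for all $k\ge0$. Note that $g(x,0)=f(x,0)=0$, so $0\in K_\psi$.

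Next I would localize $K_\psi$. Every $u\in K_\psi$ satisfies $\lan A(u),v\ran=\into g(x,u)v\,dx$ for all $v\in\W1p0$; testing with $(u-u_+)^+$ and with $(v_--u)^+$, using that $u_+,v_-$ solve \eqref{problem} and the strict monotonicity of $a$ (Lemma \ref{lemma_properties}(a)), gives $|\{u>u_+\}|_N=|\{u<v_-\}|_N=0$. On $[v_-,u_+]$ the truncation is inactive, so $-\divergenz a(\nabla u)=f(x,u)$ and the nonlinear regularity theory yields $u\in C^1_0(\close)$. Hence every nontrivial constant-sign element of $K_\psi$ lies in $\mathcal{S}_+\cup\mathcal{S}_-$ and, by extremality of $u_+,v_-$ (Proposition \ref{proposition_Sec4_extremal_solutions}), equals $u_+$ or $v_-$. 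Therefore \emph{any} critical point of $\psi$ different from $0,u_+,v_-$ is a nodal solution of \eqref{problem}; so we may assume $K_\psi$ is finite and $K_\psi=\{0,u_+,v_-\}$, and argue for a contradiction (if $K_\psi$ is infinite we already obtain infinitely many nodal solutions).

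It remains to compute the critical groups at $u_+,v_-,0$. Arguing as in the proof of Proposition \ref{proposition_Sec4_two_constant_sign}, the coercive functional $\psi_+$ has a global minimizer, which by H(f)$_2$(iv) (so that $\psi_+$ is negative along $t\hat u_1(q)$ for small $t>0$) is nontrivial; the weak comparison above places it in $[0,u_+]$ as a solution of \eqref{problem}, hence in $\mathcal{S}_+$, hence equal to $u_+$. Since $\psi$ and $\psi_+$ agree on the nonnegative functions and a $C^1_0(\close)$-neighborhood of $u_+\in\interior$ consists of such functions, $u_+$ is a local $C^1_0(\close)$-minimizer of $\psi$, hence — by Proposition \ref{proposition_local_minimizers} — a local $\W1p0$-minimizer of $\psi$; being isolated (otherwise a sequence of distinct positive solutions would appear) it gives $C_k(\psi,u_+)=\delta_{k,0}\Z$, and symmetrically $C_k(\psi,v_-)=\delta_{k,0}\Z$. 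Finally $C_k(\psi,0)=0$ for all $k\ge0$, obtained by repeating the deformation argument of Proposition \ref{proposition_Sec4_phi_zero} for $\psi$: $g(x,\cdot)$ retains near $0$ the sign and growth information of $f(x,\cdot)$ used there (the analogues of \eqref{Sec4_22} and \eqref{Sec4_400}), and H(a)$_2$(v) supplies the homogeneity-type estimate. With these four computations in hand, the Morse relation \eqref{morse_relation} under the assumption $K_\psi=\{0,u_+,v_-\}$ becomes $0+1+1=1+(1+t)Q(t)$, i.e. $(1+t)Q(t)=1$, which at $t=-1$ reads $0=1$ — a contradiction. Hence there is $y_0\in K_\psi\setminus\{0,u_+,v_-\}$, and by the second paragraph $y_0\in[v_-,u_+]\cap C^1_0(\close)$ is a nodal solution of \eqref{problem}.

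The main obstacle I expect is the precise assertion $C_k(\psi,0)=0$ for all $k$: one must reproduce the delicate retraction construction of Proposition \ref{proposition_Sec4_phi_zero} for the truncated functional, checking that the positivity of $f(x,\cdot)$ near $0$ from H(f)$_2$(iv) and the estimate $t^2a_0(t)-qG_0(t)\ge\hat\eta t^p$ still force $\frac{d}{dt}\psi(tu)\big|_{t=1}>0$ for small $\|u\|_{\W1p0}$, and carefully managing the interplay between the $C^1_0(\close)$ and $\W1p0$ topologies in the local-minimizer transfers. Everything else — coercivity, the $C$-condition, the weak comparison localizing $K_\psi$, and the final Euler-characteristic contradiction — is routine.
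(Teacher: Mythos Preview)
Your approach is essentially the one in the paper: introduce the truncated functional (the paper calls it $\varphi_0$, you call it $\psi$), localize $K_\psi\subseteq[v_-,u_+]$, show $u_+,v_-$ are local minimizers, and use Morse theory to rule out $K_\psi=\{0,u_+,v_-\}$. The only real difference is tactical: the paper produces $y_0$ directly by the mountain pass theorem between the two local minimizers $u_+,v_-$, obtains $C_1(\varphi_0,y_0)\neq 0$, and then compares this with $C_k(\varphi_0,0)=0$ to conclude $y_0\neq 0$; you instead assume $K_\psi=\{0,u_+,v_-\}$ and derive a contradiction from the full Morse relation. Both routes are valid and rely on exactly the same critical-group computations.

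On the point you correctly flag as the obstacle --- the assertion $C_k(\psi,0)=0$ --- the paper does \emph{not} redo the deformation argument of Proposition~\ref{proposition_Sec4_phi_zero} for the truncated functional. Instead it transfers the already-established $C_k(\varphi,0)=0$ to $\varphi_0$ by homotopy invariance of critical groups (along $h(t,\cdot)=t\varphi+(1-t)\varphi_0$, cf.\ the proof of Proposition~\ref{proposition_phi_u0_v0}): since $u_+\in\interior$ and $v_-\in-\interior$, any critical point of $h(t,\cdot)$ close to $0$ lies (after the $C^{1,\alpha}$-regularity bootstrap) in $[v_-,u_+]$, where $\varphi$ and $\varphi_0$ coincide. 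This sidesteps precisely the technical worry you raise about reproducing \eqref{unclear} for the truncated reaction near $\partial\Omega$, where $u_+$ and $-v_-$ degenerate. Your direct repetition can be made to work (the key inequality $\zeta\widehat G-gu\ge 0$ actually survives on $\{u>u_+\}\cap\{u_+\le\delta\}$ thanks to H(f)$_2$(iv)), but the homotopy-invariance route is cleaner and is what the paper does.
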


\begin{proof}
    By reason of Proposition \ref{proposition_Sec4_extremal_solutions} we know that $u_+ \in \interior$ and $v_- \in - \interior$ are the extremal nontrivial constant sign solutions of \eqref{problem}. With the aid of these extremal solutions we introduce the cut-off function $f_0 : \Omega \times \R \to \R$
    \begin{align}\label{Sec4_41}
	f_0(x,s)=
	\begin{cases}
	    f\l(x,v_-(x) \r) \quad & \text{if } s<v_-(x)\\
	    f\l(x,s\r) & \text{if } v_-(x) \leq s \leq u_+(x)\\
	    f\l(x,u_+(x)\r) & \text{if } u_+(x)<s
	\end{cases},
    \end{align}
    which is clearly a Carath\'{e}odory function. For $F_0(x,s)=\int_0^s f_0(x,t)dt$ we define the $C^1$-functional $\varphi_0 : \W1p0 \to \R$ by
    \begin{align*}
	\varphi_0 (u)=\int_\Omega G\l(\nabla u(x)\r) dx-\int_\Omega F_0(x,u(x)) dx.
    \end{align*}
    For $f_0^{\pm}(x,s)=f_0 \l(x,\pm s^{\pm}\r)$ we also consider the functionals $\varphi^{\pm}_0: \W1p0 \to \R$
    \begin{align*}
	\varphi_0^{\pm}(u)=\int_\Omega G\l(\nabla u(x)\r) dx-\int_\Omega F_0^{\pm}(x,u(x)) dx
    \end{align*}
    with $F_0^{\pm}(x,s)=\int_0^s f_0^{\pm}(x,t)dt$. 
    
    As in the proof of Proposition \ref{proposition_auxiliary} it can be easily shown that
    \begin{align*}
	K_{\varphi_0} \subseteq [v_-,u_+], \quad K_{\varphi_0^+}\subseteq \l[0,u_+\r], \quad  K_{\varphi_0^-}\subseteq \l[v_-,0\r].
    \end{align*}
    Then, the extremality properties of $u_+ \in \interior$ and $v_- \in - \interior$ imply that
    \begin{align}\label{claim_1}
	K_{\varphi_0} \subseteq [v_-,u_+], \quad K_{\varphi_0^+}=\l\{0,u_+\r\}, \quad K_{\varphi_0^-}=\l\{v_-,0\r\}.
    \end{align}

    {\bf Claim:} $u_+ \in \interior$ and $v_- \in -\interior$ are local minimizers of $\varphi_0$.

    First note that $\varphi_0^+$ is coercive (see \eqref{Sec4_41}) and sequentially weakly lower semicontinuous. Then there exists $\hat{u} \in W^{1,p}_0(\Omega)$ such that
    \begin{align*}
        \varphi_0^+\l (\hat{u} \r)=\inf \l \{ \varphi_0^+(u): u \in W^{1,p}_0(\Omega)\r\}.
    \end{align*}
    Similar to the proof of Proposition \ref{proposition_Sec4_two_constant_sign} (see \eqref{Sec4_2}) we have $\varphi_0^+\l(\hat{u}\r)<0=\varphi_0^+(0)$, hence $\hat{u} \neq 0$.
    Then, \eqref{claim_1} implies $\hat{u}=u_+ \in \interior$. Since $ \varphi_0\big|_{C^1_0(\close)_+}=\varphi_0^+\big|_{C^1_0(\close)_+}$ we deduce that $u_+ \in \interior$ is a local $C^1_0(\overline{\Omega})$-minimizer of $\varphi_0$ and thanks to Proposition \ref{proposition_local_minimizers} it follows that $u_+$ is a local $W^{1,p}_0(\Omega)$-minimizer of $\varphi_0$. The assertion for $v_- \in - \interior$ can be shown similarly, using $\varphi_0^-$ instead of $\varphi_0^+$. This proves the claim.

    We may assume, without loss of generality, that $\varphi_0(v_-) \leq \varphi_0(u_+)$. By virtue of the claim, we find a number $\rho \in (0,1)$ such that
    $\|v_--u_+\|_{W^{1,p}_0(\Omega)}>\rho$ and
    \begin{align}\label{Sec4_43}
        \varphi_0\l(v_-\r) \leq \varphi_0\l(u_+\r)<\inf \l [ \varphi_0(u):\|u-u_+\|_{W^{1,p}_0(\Omega)}= \rho \r]=\eta_0.
    \end{align}
    (see Aizicovici-Papageorgiou-Staicu \cite[proof of Proposition 29]{2008-Aizicovici-Papageorgiou-Staicu}). Because of the definition of the truncation in \eqref{Sec4_41} it is clear that $\varphi_0$ is coercive and so it satisfies the C-condition. This fact in conjunction with \eqref{Sec4_43} permits the usage of the mountain pass theorem stated in Theorem \ref{theorem_mountain_pass}. Therefore, we find $y_0 \in W^{1,p}_0(\Omega)$ such that
    \begin{align}\label{Sec4_44}
	y_0 \in K_{\varphi_0} \subseteq [v_-,u_+] \quad \text{and} \quad \eta_0 \leq \ph_0(y_0)
    \end{align}
    (see also \eqref{claim_1}). From \eqref{Sec4_44}, \eqref{Sec4_41}, and \eqref{Sec4_43} it follows that $y_0$ is a solution of \eqref{problem}  and $y_0 \not\in \{v_-,u_+\}$. The nonlinear regularity theory implies that $y_0 \in C^1_0(\close)$.
    
    Since $y_0$ is a critical point of $\ph_0$ of mountain pass type, we have
    \begin{align}\label{Sec4_69}
        C_1(\varphi_0,y_0) \neq 0.
    \end{align}
    On the other side Proposition \ref{proposition_Sec4_phi_zero} amounts
    \begin{align*}
        C_k(\varphi,0)=0 \quad \text{for all } k \geq 0.
    \end{align*}
    Moreover, \eqref{Sec4_41} implies $\ph\big|_{[v_-,u_+]}=\ph_0\big |_{[v_-,u_+]}$ and since $u_+ \in \interior, v_- \in - \interior$ combined with the homotopy invariance of critical groups (cf. the proof of Proposition \ref{proposition_phi_u0_v0}) we infer that
    \begin{align}\label{Sec4_70}
	C_k(\ph_0,0)=C_k(\ph,0)=0 \quad \text{for all } k \geq 0.
    \end{align}
    Comparing \eqref{Sec4_69} and \eqref{Sec4_70} we obtain that $y_0 \in [v_-,u_+] \cap C^1_0(\close) \setminus \{0\}$. Due to the extremality of $u_+$ and $v_-$ the solution $y_0$ must be nodal.
\end{proof}

Summarizing this section we can state the following multiplicity theorem for problem \eqref{problem}.

\begin{theorem}\label{theorem_Sec4_main_result}
    If hypotheses H(a)$_2$ and H(f)$_2$ hold, then problem (\ref{problem}) has at least four constant sign solutions
    \begin{align*}
	& \bullet u_0, u_1 \in \interior, u_0 \leq u_1, u_0 \neq u_1\\
	& \bullet v_0, v_1 \in -\interior, v_1 \leq v_0, v_1 \neq v_0
    \end{align*}
    and at least one sign-changing (nodal) solution
    \begin{align*}
	y_0 \in [v_0,u_0] \cap C^1_0(\close).
    \end{align*}
\end{theorem}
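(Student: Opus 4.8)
The statement is a synthesis of the three existence results established in this section, so the plan is essentially to concatenate them and check the one inclusion of order intervals that is needed to match the phrasing.

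First I would invoke Proposition~\ref{proposition_Sec4_two_constant_sign} to obtain the two ``small'' constant sign solutions $u_0\in\interior$ and $v_0\in-\interior$ with $\xi_-<v_0(x)\le 0\le u_0(x)<\xi_+$ on $\close$; recall that these come from minimizing the truncated coercive functionals $\hat{\varphi}_{\pm}$, using the oscillation hypothesis H(f)$_2$(iv) together with $\hat{u}_1(q)$ and the convexity in H(a)$_2$(v) to force the infimum to be strictly negative, and then using weak comparison, the nonlinear regularity theory and the Pucci--Serrin maximum/boundary point principles to place them in $\interior$ and, in fact, in the $C^1_0(\close)$-interior of $[0,\xi_+]$ resp. $[\xi_-,0]$, which is what makes them local minimizers of $\varphi$.

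Next I would invoke Proposition~\ref{proposition_Sec4_four_constant_sign} for the two ``large'' solutions $u_1\in\interior$ with $u_0\le u_1$, $u_1\ne u_0$, and $v_1\in-\interior$ with $v_1\le v_0$, $v_1\ne v_0$. The mechanism there is to truncate $f$ from below at $u_0$, show that $u_0$ is a local minimizer of the resulting functional $\sigma_+$ (via Proposition~\ref{proposition_local_minimizers}), verify the $C$-condition as in Proposition~\ref{proposition_C_condition}, use H(f)$_2$(ii) to get $\sigma_+(tu)\to-\infty$ along some direction, and apply the mountain pass theorem (Theorem~\ref{theorem_mountain_pass}); the resulting critical point $u_1$ has $\sigma_+(u_1)\ge\eta_\varrho^+>\sigma_+(u_0)$, hence $u_1\ne u_0$, while a comparison argument through the truncation gives $u_0\le u_1$ and hence $u_1\in\interior$.

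For the nodal solution I would first invoke Proposition~\ref{proposition_Sec4_extremal_solutions}, which --- via the auxiliary problem \eqref{problem_aux}, the uniqueness of its constant sign solutions $u_*,v_*$ (Proposition~\ref{proposition_Sec4_aux_problem_uniqueness}), the lower bound $u_*\le u$ for all $u\in\SS_+$ (Proposition~\ref{proposition_auxiliary}), and Zorn's lemma on the directed sets $\SS_\pm$ --- produces the smallest positive solution $u_+\in\interior$ and the greatest negative solution $v_-\in-\interior$. Since $u_0\in\SS_+$ and $v_0\in\SS_-$, extremality gives $u_+\le u_0$ and $v_0\le v_-$, so $[v_-,u_+]\subseteq[v_0,u_0]$. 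Then Proposition~\ref{proposition_Sec4_nodal_solution} furnishes $y_0\in K_{\varphi_0}\subseteq[v_-,u_+]\subseteq[v_0,u_0]$, a mountain pass critical point of the doubly truncated functional $\varphi_0$, with $y_0\in C^1_0(\close)$ by the nonlinear regularity theory; collecting everything yields the asserted list of solutions. The only point that is not pure bookkeeping --- and the step I expect to be the crux --- is that $y_0$ is genuinely sign changing and nonzero rather than just another constant sign solution inside $[v_-,u_+]$: this is where Morse theory enters, since $y_0$ is of mountain pass type so $C_1(\varphi_0,y_0)\ne 0$, whereas $C_k(\varphi_0,0)=C_k(\varphi,0)=0$ for all $k\ge 0$ by homotopy invariance of critical groups and Proposition~\ref{proposition_Sec4_phi_zero} (whose proof exploits the $\zeta$-sublinear behaviour near the origin in H(f)$_2$(iv), with $\zeta<q$), so that $y_0\ne 0$; and since $u_+,v_-$ are extremal, a nontrivial solution in $[v_-,u_+]$ that did not change sign would contradict their minimality resp. maximality, forcing $y_0$ to be nodal. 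No new estimates are required: the theorem is the concatenation of Propositions~\ref{proposition_Sec4_two_constant_sign}, \ref{proposition_Sec4_four_constant_sign} and \ref{proposition_Sec4_nodal_solution}.
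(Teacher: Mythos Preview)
Your proposal is correct and matches the paper's approach exactly: the paper's proof is a one-line invocation of Propositions~\ref{proposition_Sec4_two_constant_sign}, \ref{proposition_Sec4_four_constant_sign} and \ref{proposition_Sec4_nodal_solution}, and you have faithfully unpacked these. Your explicit remark that $[v_-,u_+]\subseteq[v_0,u_0]$ (from the extremality of $u_+,v_-$) is a useful bridge that the paper leaves implicit when passing from the conclusion of Proposition~\ref{proposition_Sec4_nodal_solution} to the interval stated in the theorem.
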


\begin{proof}
    The result follows from the Propositions \ref{proposition_Sec4_two_constant_sign}, \ref{proposition_Sec4_four_constant_sign}, and \ref{proposition_Sec4_nodal_solution}.
\end{proof}

In the next section we will improve Theorem \ref{theorem_Sec4_main_result} for a particular case of problem \eqref{problem} and with stronger regularity conditions on the reaction $f(x,\cdot)$. It will be shown the existence of a second nodal solution for a total of six nontrivial solutions given with complete sign information.

\section{$(p,2)$-equation}\label{section_five}

In this section we deal with a particular case of problem \eqref{1}. Namely, we assume that
\begin{align*}
    a(\xi)=\|\xi\|^{p-2}\xi+\xi \quad \text{for all }\xi \in \R^N \text{ with } 2 \leq p <\infty.
\end{align*}
In this case the differential operator becomes the $(p,2)$-Laplacian, that is
\begin{align*}
    \divergenz a(\nabla u)=\Delta_p u + \Delta u \quad \text{for all } u \in \W1p0.
\end{align*}

This differential operator arises in problems of quantum physics in connection with Derick's model \cite{1964-Derrick} for the existence of solitons (see Benci-D'Avenia-Fortunato-Pisani \cite{2000-Benci-D'Avenia-Fortunato-Pisani}).

Therfore, the problem under consideration is the following:
\begin{equation}\label{problem2}
    \begin{aligned}
      -\Delta_p u(x)-\Delta u(x) & = f(x,u(x)) \quad && \text{in } \Omega,\\
       u & = 0  &&\text{on } \partial \Omega.
    \end{aligned}
\end{equation}

Under stronger regularity conditions on the reaction $f(x,\cdot)$ we will show that problem \eqref{problem2} has a second nodal solution for a total of six nontrivial solutions (two positive, two negative, and two nodal).

We need to strengthen our hypotheses on the perturbation $f:\Omega \times \R \to \R$ in the following way.

\begin{enumerate}
    \item[H(f)$_3$]
	$f: \Omega \times \R \to \R$ is a measurable function such that $f(x,0)=0, f(x, \cdot) \in C^1(\R)$ for a.a. $x \in \Omega$, hypotheses H(f)$_3$(ii), (iii), (v), (vi) are the same as the corresponding hypotheses H(f)$_2$(ii), (iii), (v), (vi) and
	\begin{enumerate}
	    \item[(i)]
		$|f'_s(x,s)| \leq a(x) \left(1+|s|^{r-2}\right)$ for a.a. $x \in \Omega$, for all $s \in \R$, with $a\in \Linf_+$, and $2<r<p^*$;
	    \item[(iv)]
		$f'_s(x,0)=\lim_{s \to 0} \frac{f(x,s)}{s}$ uniformly for a.a. $x\in\Omega$, $$f'_s(x,0) \in \l[\hat{\lambda}_m(2),\hat{\lambda}_{m+1}(2)\r] \quad \text{a.e. in } \Omega \text{ with }m \geq 2,$$ and $f_s'(\cdot,0)\neq \hat{\lambda}_m(2)$, $f'_s(\cdot,0) \neq \hat{\lambda}_{m+1}(2)$.
	\end{enumerate}
\end{enumerate}

\begin{remark}
	Note that the asymptotic behavior of $f(x,\cdot)$ at $\pm\infty$ remains the same. The situation has changed near zero (see H(f)$_3$(iv)) since the concave term has power equal to $q=2$ (i.e. $\zeta=q=2$). This changes the computation of the critical groups of the energy functional $\ph$ at the origin.
\end{remark}

\begin{example}
	The following function satisfies hypotheses H(f)$_3$ (the $x$-dependence is dropped again):
	\begin{align*}
		f(x)=
		\begin{cases}
			\lambda s- c s^2 &\text{if }|s|\leq 1\\
			\beta s \ln |s|-(\lambda-c)|s|^{\frac{1}{2}}& \text{if }|s|> 1
		\end{cases}
	\end{align*}
	with $\lambda \in \left( \hat{\lambda}_m(2),\hat{\lambda}_{m+1}(2)\right)$ for some $m\geq 2$, $\beta>4\lambda$, and $c=\frac{2\beta-\lambda}{5}>0$.
\end{example}

We start with the computation of the critical groups at the origin.

\begin{proposition}\label{proposition_Sec5_phi_zero}
	Let hypotheses H(f)$_3$ be satisfied. Then
	\begin{align*}
		C_k(\ph,0)=\delta_{k,d_m} \Z \quad \text{for all }k \geq 0
	\end{align*}
	with $d_m=\dim \oplus_{i=1}^m E\left( \hat{\lambda}_i(2)\right)\geq 2$.
\end{proposition}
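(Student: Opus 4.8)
The plan is to show that near the origin $\varphi$ is, modulo lower-order terms, governed by the Laplacian and the linearization $f'_s(\cdot,0)$, and then to read off the critical groups from the location of $f'_s(\cdot,0)$ in the spectral gap $[\hat\lambda_m(2),\hat\lambda_{m+1}(2)]$; this is where the situation genuinely differs from Proposition~\ref{proposition_Sec4_phi_zero}, and I would follow the ideas of Moroz \cite{1997-Moroz} and Jiu--Su \cite{2003-Jiu-Su}. Using the decomposition $H^1_0(\Omega)=\overline{\bigoplus_{k\ge1}E(\hat\lambda_k(2))}$, put $\overline H=\bigoplus_{i=1}^m E(\hat\lambda_i(2))$ --- finite dimensional of dimension $d_m$ and, the eigenfunctions being smooth, contained in $C^1_0(\overline\Omega)\subseteq W^{1,p}_0(\Omega)$ --- and $\widehat H=\overline{\bigoplus_{i\ge m+1}E(\hat\lambda_i(2))}$, so that $W^{1,p}_0(\Omega)=\overline H\oplus V$ with $V:=\widehat H\cap W^{1,p}_0(\Omega)$ is a topological direct sum. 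Two ingredients are needed first. The spectral estimate: by H(f)$_3$(iv) one has $\hat\lambda_m(2)\le f'_s(x,0)\le\hat\lambda_{m+1}(2)$ a.e.\ with $f'_s(\cdot,0)\not\equiv\hat\lambda_m(2)$, $f'_s(\cdot,0)\not\equiv\hat\lambda_{m+1}(2)$, hence there is $c_0>0$ with
\[
\|\nabla u\|_{L^2(\Omega)}^2-\into f'_s(x,0)u^2\,dx\le -c_0\|u\|_{H^1_0(\Omega)}^2\ \ (u\in\overline H),\qquad \ge c_0\|u\|_{H^1_0(\Omega)}^2\ \ (u\in\widehat H);
\]
this is the $m$-th eigenvalue analogue of Lemma~\ref{lemma_aux}. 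The Taylor expansion: by H(f)$_3$(i),(iv), for each $\varepsilon>0$ there is $\delta>0$ with $|F(x,s)-\tfrac12 f'_s(x,0)s^2|\le\varepsilon s^2$ for $|s|\le\delta$, while $|F(x,s)|\le c(1+|s|^r)$ for all $s$.

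We may assume $0$ is an isolated point of $K_\varphi$ (otherwise $\varphi$ has a sequence of distinct nontrivial critical points, which by the regularity theory are distinct nontrivial solutions of \eqref{problem2} in $C^1_0(\overline\Omega)$, and there is nothing more to prove). Combining the spectral estimate, the nonnegativity of $\tfrac1p\|\nabla u\|_{L^p(\Omega)}^p$, and the error control above --- the super-quadratic part being absorbed through the Sobolev embedding $W^{1,p}_0(\Omega)\hookrightarrow L^r(\Omega)$ and the $\tfrac1p\|\nabla u\|_{L^p(\Omega)}^p$ term, and on the finite-dimensional $\overline H$ all norms being equivalent so that there this term is merely $O(\|u\|^p)$ --- I would establish a \emph{local linking} of $\varphi$ at $0$: there is $\rho\in(0,1)$ with $\varphi(u)\le0$ for all $u\in\overline H$, $\|u\|_{W^{1,p}_0(\Omega)}\le\rho$, and $\varphi(u)>0$ for all $u\in V$, $0<\|u\|_{W^{1,p}_0(\Omega)}\le\rho$.

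It remains to convert this into the exact critical groups. Shrinking $\rho$ so that $K_\varphi\cap\overline B_\rho=\{0\}$, I would build an explicit deformation of $\varphi^0\cap\overline B_\rho$ onto $\overline H\cap\overline B_\rho$ that is the identity on $\overline H$ near $0$ and maps $(\varphi^0\cap\overline B_\rho)\setminus\{0\}$ onto $(\overline H\cap\overline B_\rho)\setminus\{0\}$ --- the last property being where the positivity of $\varphi$ on $V\setminus\{0\}$ is used, since it forces the $\overline H$-component of any $u\in\varphi^0$ near $0$, $u\ne0$, to be nonzero --- whence
\[
C_k(\varphi,0)=H_k\big(\varphi^0\cap\overline B_\rho,(\varphi^0\cap\overline B_\rho)\setminus\{0\}\big)=H_k\big(\overline H\cap\overline B_\rho,(\overline H\cap\overline B_\rho)\setminus\{0\}\big)=\delta_{k,d_m}\Z
\]
for all $k\ge0$, and $d_m\ge2$ since $m\ge2$ and $\hat\lambda_1(2)$ is simple. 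Equivalently, one may homotope $\varphi$ to the $C^2$ quadratic functional $\widehat\varphi(u)=\tfrac12\|\nabla u\|_{L^2(\Omega)}^2-\tfrac12\into f'_s(x,0)u^2\,dx$, which by the spectral estimate has $0$ as a critical point of trivial nullity and Morse index $d_m$, and then invoke homotopy invariance of critical groups after checking that $0$ is the only critical point of the homotopy in a fixed small ball. I expect this deformation step to be the main obstacle: $\varphi$ is only $C^1$ (the $p$-Laplacian term fails to be $C^2$ at $0$) and the coercivity furnished by the spectral gap is measured in the $H^1_0(\Omega)$-norm while the functional lives on $W^{1,p}_0(\Omega)$, so one must argue with some care that the deformation stays in the sublevel set and avoids the origin; the positivity of the $p$-Laplacian contribution and the isolatedness of $0$ are what make this possible.
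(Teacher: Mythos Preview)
Your approach differs substantially from the paper's, and the route you propose has a genuine gap at the step you yourself flag as the main obstacle.

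The paper does not use local linking at all. Instead it introduces the auxiliary $C^2$-functional
\[
\gamma(u)=\frac{1}{p}\|\nabla u\|_{L^p(\Omega)}^p+\frac{1}{2}\|\nabla u\|_{L^2(\Omega)}^2-\frac{1}{2}\int_\Omega f'_s(x,0)u^2\,dx,
\]
shows via H(f)$_3$(iv) that $\varphi$ and $\gamma$ are $C^1$-close on a small $C^1_0(\overline\Omega)$-ball about $0$, invokes the continuity of critical groups under small $C^1$-perturbations (Chang \cite[p.~336]{2005-Chang}) together with Palais' density result \cite{1966-Palais} to obtain $C_k(\varphi,0)=C_k(\gamma,0)$, and then applies the theorem of Cingolani--Vannella \cite[Theorem~1]{2003-Cingolani-Vannella}, which computes critical groups on Sobolev Banach spaces precisely for functionals of the form $\gamma$ via the Morse index, yielding $C_k(\gamma,0)=\delta_{k,d_m}\Z$.

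The crucial point is that the paper \emph{retains} the $p$-Laplacian term in the comparison functional $\gamma$; Cingolani--Vannella is tailored to exactly this situation and delivers the full critical-group computation without any explicit deformation argument. Your alternative of homotoping to the pure quadratic $\widehat\varphi(u)=\tfrac12\|\nabla u\|_{L^2}^2-\tfrac12\int f'_s(x,0)u^2$ drops this term, and then $0$ is a degenerate critical point of $\widehat\varphi$ on $W^{1,p}_0(\Omega)$: the second derivative $\widehat\varphi''(0)$ is not Fredholm there (it is naturally an operator on $H^1_0$), so the standard Morse-index computation does not directly give $C_k(\widehat\varphi,0)=\delta_{k,d_m}\Z$ on $W^{1,p}_0(\Omega)$.

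As for your primary route, local linking by itself yields only $C_{d_m}(\varphi,0)\neq 0$, not the vanishing of the other groups. The explicit deformation of $\varphi^0\cap\overline B_\rho$ onto $\overline H\cap\overline B_\rho$ you outline would have to be constructed by hand for a $C^1$ (not $C^2$) functional on a Banach space, with coercivity measured in the wrong norm --- this is essentially redoing the Cingolani--Vannella analysis, and is not something one can wave through. The paper sidesteps all of this by citing that result.
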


\begin{proof}
	Consider the $C^2$-functional $\gamma: \W1p0 \to \R$ defined by
	\begin{align*}
		\gamma(u)=\frac{1}{p} \|\nabla u\|_{\Lp{p}}^p
		+\frac{1}{2} \|\nabla u\|_{\Lp{2}}^2-\frac{1}{2} \into f'_u(x,0)u^2 dx.
	\end{align*}
	By virtue of hypothesis H(f)$_3$(iv), given $\eps>0$, there exists $\delta=\delta(\eps) \in (0,1)$ such that
	\begin{align*}
		\left|\frac{f(x,s)}{s}-f'_s(x,0) \right| \leq \eps \quad \text{for a.a. }x\in \Omega \text{ and for all } 0<|s|\leq \delta,
	\end{align*}
	which implies that
	\begin{align*}
		\left|F(x,s)-\frac{1}{2}f'_s(x,0)s^2 \right| \leq \eps \quad \text{for a.a. }x\in \Omega \text{ and for all } 0<|s|\leq \delta.
	\end{align*}
	Therefore, we find $\varrho \in (0,1)$ such that
	\begin{align*}
		\|\ph-\gamma\|_{C^1_0 \left(\overline{B}_\varrho^C \right)} \leq \eps,
	\end{align*}
	where $\overline{B}_\varrho^C= \left \{ u \in C^1_0(\close): \|u\|_{C^1_0(\close)} \leq \varrho \right\}$.
	
	Choosing $\eps>0$ sufficiently small gives
	\begin{align*}
		C_k\left( \ph \big |_{C^1_0(\close)},0\right)=C_k\left( \gamma \big |_{C^1_0(\close)},0\right) \quad \text{for all }k\geq 0
	\end{align*}
	(see Chang \cite[p. 336]{2005-Chang}) and since $C^1_0(\close)$ is dense in $\W1p0$ it follows
	\begin{align}\label{Sec5_1}
		C_k(\ph,0)=C_k(\gamma,0) \quad \text{for all }k\geq 0
	\end{align}
	(see Palais \cite{1966-Palais}). Moreover, due to Cingolani-Vannella \cite[Theorem 1]{2003-Cingolani-Vannella}, one has
	\begin{align*}
		C_k(\gamma,0)=\delta_{k,d_m}\Z \quad \text{for all }k\geq 0,
	\end{align*}
	which, because of \eqref{Sec5_1}, results in
	\begin{align*}
		C_k(\ph,0)=\delta_{k,d_m}\Z \quad \text{for all }k\geq 0.
	\end{align*}
\end{proof}

A careful inspection of the proofs in the previous section reveals that the results remain valid although we have a different geometry near zero (since $\zeta=q=2$ in the notation of Section \ref{section_four}). In this case, by means of hypotheses H(f)$_3$(i), (iv), we know that for given $\eps>0$ there is a number $M_{27}=M_{27}(\eps)>0$ such that
\begin{align*}
	f(x,s)s \geq \left(f'_s(x,0)-\eps\right)s^2-M_{27}|s|^r \quad \text{for a.a. }x\in\Omega \text{ and for all }s\in\R.
\end{align*}
This unilateral growth estimate leads to the following auxiliary Dirichlet problem
\begin{equation}\label{problem_aux2}
    \begin{aligned}
      -\Delta_p u(x)-\Delta u(x) & = \left(f'_u(x,0)-\eps\right)u(x)^2-M_{27}|u(x)|^{r-2}u(x) \quad && \text{in } \Omega,\\
       u & = 0  &&\text{on } \partial \Omega.
    \end{aligned}
\end{equation}

Choosing $\eps \in\left(0,\hat{\lambda}_m(2)-\hat{\lambda}_{m+1}(2)\right)$ we can show that problem \eqref{problem_aux2} admits a unique nontrivial positive solution $u_*\in \interior$ and, by the oddness of \eqref{problem_aux2}, we have that $v_*=-u_*\in -\interior$ is the unique nontrivial negative solution of \eqref{problem_aux2}. The proof can be done as the proof of Proposition \ref{proposition_Sec4_aux_problem_uniqueness}. Therefore, the arguments of Section \ref{section_four} apply and we produce five nontrivial solutions
\begin{align*}
    \begin{split}
	& \bullet u_0, u_1 \in \interior, u_0 \leq u_1, u_0 \neq u_1;\\
	& \bullet v_0, v_1 \in -\interior, v_1 \leq v_0, v_1 \neq v_0;\\
  	& \bullet y_0 \in [v_0,u_0] \cap C^1_0(\close) \text{ nodal}.
    \end{split}
\end{align*}

Using these five solutions and Morse theory, we can produce a sixth nontrivial solution being nodal.

\begin{theorem}\label{theorem_Sec5_main_result}
	Let hypotheses H(f)$_3$ be satisfied. Then problem (\ref{problem2}) has at least six nontrivial solutions
	 \begin{align*}
		& \bullet u_0, u_1 \in \interior, u_0 \leq u_1, u_0 \neq u_1;\\
		& \bullet v_0, v_1 \in -\interior, v_1 \leq v_0, v_1 \neq v_0;\\
  		& \bullet y_0, y_1 \in \ints_{C^1_0(\close)} [v_0,u_0]  \text{ nodal}.
 	\end{align*}
\end{theorem}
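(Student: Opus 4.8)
The plan is to enlarge the critical set carrying the five solutions already produced by the arguments of Section~\ref{section_four} adapted to the present situation — namely
\[
u_0,u_1\in\interior\ \text{with}\ u_0\le u_1,\qquad v_0,v_1\in-\interior\ \text{with}\ v_1\le v_0,\qquad y_0\in[v_0,u_0]\cap C^1_0(\close)\ \text{nodal}
\]
(these come from Proposition~\ref{proposition_Sec5_phi_zero} together with the $(p,2)$-versions of Propositions~\ref{proposition_Sec4_aux_problem_uniqueness}--\ref{proposition_Sec4_nodal_solution}) — by exploiting that in the $(p,2)$-framework the critical groups of $\ph$ at the origin have become $C_k(\ph,0)=\delta_{k,d_m}\Z$ with $d_m\ge 2$. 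Write $u_+\in\interior$, $v_-\in-\interior$ for the extremal constant sign solutions of \eqref{problem2} (the analogue of Proposition~\ref{proposition_Sec4_extremal_solutions}) and let $\ph_0$ be the cut-off energy functional attached to $v_-$ and $u_+$ exactly as in the proof of Proposition~\ref{proposition_Sec4_nodal_solution}. Recall that $\ph_0$ is coercive (hence satisfies the $C$-condition), $K_{\ph_0}\subseteq[v_-,u_+]$, the points $v_-,u_+$ are local minimizers of $\ph_0$, and $y_0$ is a mountain pass critical point of $\ph_0$ with $\ph_0(y_0)>\ph_0(u_+)\ge\ph_0(v_-)$; thus $\{0,v_-,u_+,y_0\}\subseteq K_{\ph_0}$, these four points are pairwise distinct, and we may assume them isolated. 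I argue by contradiction, supposing $K_{\ph_0}=\{0,v_-,u_+,y_0\}$: if $K_{\ph_0}$ were strictly larger, then, reasoning as in the last paragraph below, it would already contain a further nodal solution of \eqref{problem2}.

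First I would record the relevant critical groups of $\ph_0$. Since $\ph_0$ is coercive and bounded below, $(\ph_0)^c=\emptyset$ for $c$ sufficiently negative, so $C_k(\ph_0,\infty)=\delta_{k,0}\Z$ for all $k$. As $v_-,u_+$ are isolated local minimizers, $C_k(\ph_0,v_-)=C_k(\ph_0,u_+)=\delta_{k,0}\Z$. Since $0\in\ints_{C^1_0(\close)}[v_-,u_+]$ and $\ph_0$ coincides with $\ph$ on the order interval $[v_-,u_+]$, arguing exactly as in the proof of Proposition~\ref{proposition_Sec4_nodal_solution} but invoking Proposition~\ref{proposition_Sec5_phi_zero} in place of Proposition~\ref{proposition_Sec4_phi_zero}, one gets $C_k(\ph_0,0)=C_k(\ph,0)=\delta_{k,d_m}\Z$ with $d_m\ge 2$.

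The decisive step is to show $C_k(\ph_0,y_0)=\delta_{k,1}\Z$ for all $k$. Applying the tangency principle to the pairs $(u_+,y_0)$ and $(y_0,v_-)$ — using H(f)$_3$(vi) to absorb $f$ into a term that is monotone on the relevant interval, together with the strict monotonicity of $w\mapsto-\Delta_p w-\Delta w+\xi_\varrho|w|^{p-2}w$ — yields $u_+-y_0\in\interior$ and $y_0-v_-\in\interior$, whence $y_0\in\ints_{C^1_0(\close)}[v_-,u_+]$. Consequently $\ph_0$ and $\ph$ agree on a $C^1_0(\close)$-neighbourhood of $y_0$, so $C_k(\ph_0,y_0)=C_k(\ph,y_0)$, and near $y_0$ we are dealing with the genuine $(p,2)$-energy functional whose reaction $f(x,\cdot)$ is $C^1$ by H(f)$_3$(i). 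Since $y_0$ is an isolated critical point of mountain pass type, $C_1(\ph,y_0)\ne0$; and the Morse-theoretic analysis of isolated critical points of $(p,2)$-functionals in the spirit of Cingolani--Vannella \cite{2003-Cingolani-Vannella} forces the critical groups at $y_0$ to be concentrated in a single degree, which must then be $1$. This is the main obstacle of the proof: for $p>2$ the functional $\ph$ is merely $C^1$, so that this concentration cannot be read off from a naive nondegeneracy argument but must be extracted from the special second-order structure of $(p,2)$-functionals.

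Finally I would insert the four groups into the Morse relation \eqref{morse_relation} for $\ph_0$ and evaluate at $t=-1$:
\[
(-1)^{d_m}+1+1+(-1)=1,
\]
the left-hand side being contributed by $0,v_-,u_+,y_0$ respectively and the right-hand side equal to $P(-1,\infty)$. This forces $(-1)^{d_m}=0$, which is absurd. Hence $K_{\ph_0}\supsetneq\{0,v_-,u_+,y_0\}$, so there exists $y_1\in K_{\ph_0}\setminus\{0,v_-,u_+,y_0\}$. Because $K_{\ph_0}\subseteq[v_-,u_+]$ and the truncated reaction equals $f$ there, $y_1$ solves \eqref{problem2}; if $y_1\ge0$ then $y_1\in\mathcal{S}_+$, so the minimality of $u_+$ and $y_1\le u_+$ force $y_1=u_+$, a contradiction, and symmetrically $y_1\le 0$ is impossible, hence $y_1$ is nodal. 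The nonlinear regularity theory gives $y_1\in C^1_0(\close)$, and the tangency principle applied to $(u_+,y_1)$ and $(y_1,v_-)$ (again via H(f)$_3$(vi)) yields $u_+-y_1\in\interior$ and $y_1-v_-\in\interior$; since $v_0\le v_-$ and $u_+\le u_0$ this upgrades to $u_0-y_1\in\interior$ and $y_1-v_0\in\interior$, i.e.\ $y_1\in\ints_{C^1_0(\close)}[v_0,u_0]$, and the same reasoning places $y_0$ there as well. This delivers the sixth nontrivial solution and finishes the proof.
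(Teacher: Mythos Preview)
Your overall strategy coincides with the paper's: work with the truncated functional $\ph_0$ attached to the extremal constant sign solutions, compute $C_k(\ph_0,\cdot)$ at $0,u_+,v_-,y_0$ and at infinity, and derive a contradiction from the Morse relation at $t=-1$ to produce the sixth critical point $y_1$, which is then shown to be nodal by extremality and placed in $\ints_{C^1_0(\close)}[v_0,u_0]$. The Morse arithmetic and the final extraction of $y_1$ are exactly as in the paper.

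Two places where your justifications are less precise than the paper's deserve comment. First, to conclude $u_+-y_0\in\interior$ (and likewise for $y_0-v_-$), the tangency principle of Pucci--Serrin \cite[p.~35]{2007-Pucci-Serrin} alone only gives the strict interior inequality $y_0<u_+$ in $\Omega$; the paper then invokes the strong comparison principle of Papageorgiou--Smyrlis \cite[Proposition~3]{2013-Papageorgiou-Smyrlis} (which uses H(f)$_3$(vi) in the way you indicate) to upgrade this to membership in $\interior$, i.e.\ to control the normal derivative on $\partial\Omega$. Your sketch gestures at the right ingredients but conflates the two steps. Second, for the key claim $C_k(\ph,y_0)=\delta_{k,1}\Z$, the paper does not appeal to Cingolani--Vannella \cite{2003-Cingolani-Vannella} (whose result identifies critical groups via a generalized Morse index, not via the implication ``$C_1\ne 0\Rightarrow$ concentration in degree~$1$''); instead it records that $\ph\in C^2\l(\W1p0\r)$ and cites \cite[proof of Proposition~12, Claim~2]{2013-Papageorgiou-Smyrlis}, which furnishes precisely this implication for $(p,2)$-type energies. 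So your identification of this step as the main obstacle is correct, but the resolution lies in \cite{2013-Papageorgiou-Smyrlis} rather than in the Cingolani--Vannella framework.
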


\begin{proof}
    As we already remarked the conclusion of Theorem \ref{theorem_Sec4_main_result} remains valid in the present setting and thus we already have five nontrivial solutions
    \begin{align*}
	\begin{split}
	    & \bullet u_0, u_1 \in \interior, u_0 \leq u_1, u_0 \neq u_1;\\
	    & \bullet v_0, v_1 \in -\interior, v_1 \leq v_0, v_1 \neq v_0;\\
	    & \bullet y_0 \in [v_0,u_0] \cap C^1_0(\close) \text{ nodal}.
	\end{split}
    \end{align*}
    	
    Without loss of generality we may assume that both, $u_0$ and $v_0$, are extremal nontrivial constant sign solutions, i.e., $u_0=u_+$ and $v_0=v_-$ in the notation of Proposition \ref{proposition_Sec4_extremal_solutions}. We have
    \begin{align*}
	-\Delta_p u_0(x)-\Delta u_0(x)-f(x,u_0(x))=0=-\Delta_p y_0(x)-\Delta y_0(x)-f(x,y_0(x)
    \end{align*}
    for a.a. $x \in \Omega$ and $y_0 \leq u_0$. As $a(\xi)=\|\xi\|^{p-2}\xi+\xi$ for all $\xi \in \R^N$ we see that $a \in C^1(\R^N,\R^N)$. Hence,
    \begin{align*}
	\nabla a(\xi)=\|\xi\|^{p-2} \l(I+ (p-2)\frac{ \xi\otimes\xi}{\|\xi\|^2} \r)+I \quad \text{ for all } \xi \in \R^N \setminus \{0\},
    \end{align*}
    implying
    \begin{align*}
	\l(\nabla a(\xi)y,y \r)_{\R^N} \geq \|y\|^2 \quad \text{for all }\xi, y  \in \R^N.
    \end{align*}
    This fact along with hypothesis H(f)$_3$(iv) permits the usage of the tangency principle of Pucci-Serrin \cite[p. 35]{2007-Pucci-Serrin} to obtain $y_0(x)<u_0(x)$ for all $x \in \Omega$. Similarly, one can prove $v_0(x)<y_0(x)$ for all $x \in \Omega$.
    
    Let $\varrho=\max \left \{\|u_0\|_{C(\close)}, \|v_0\|_{C(\close)} \right\}$ and let $\xi_\varrho$ be as postulated in hypothesis H(f)$_3$(vi). For $\xi>\xi_\varrho$ we infer
    \begin{align*}
	& -\Delta_pu_0(x)-\Delta u_0(x)+\xi u_0(x)^{p-1}\\
	& =f(x,u_0(x))+\xi u_0(x)^{p-1}\\
	& = f(x,u_0(x))+\xi_\varrho u_0(x)^{p-1}+ \left(\xi-\xi_\varrho \right) u_0(x)^{p-1}\\
	& \geq f(x,y_0(x))+\xi_\varrho |y_0(x)|^{p-2}y_0(x)+ \left(\xi-\xi_\varrho \right) u_0(x)^{p-1}\\
	& > f(x,y_0(x))+\xi_\varrho |y_0(x)|^{p-2}y_0(x)+ \left(\xi-\xi_\varrho \right) |y_0(x)|^{p-2}y_0(x)\\
	& = -\Delta_p y_0(x)-\Delta y_0(x)+\xi|y_0(x)|^{p-2}y_0(x) \quad \text{a.e. in }\Omega.
    \end{align*}
    Since $u_0 \in \interior$ and $y_0 \in C^1_0(\close)$ we may apply the strong comparison principle of Papageorgiou-Smyrlis \cite[Proposition 3]{2013-Papageorgiou-Smyrlis} and deduce that $u_0-y_0 \in \interior$. In a similar fashion we show that $y_0-v_0 \in \interior$. Therefore, we have proved that
    \begin{align}\label{Sec5_3}
	y_0 \in \ints_{C^1_0(\overline{\Omega})} [v_0,u_0].
    \end{align}
    
    Let $\varphi_0 \in C^{2-0}\left( \W1p0\right)$ be the functional introduced in the proof of Proposition \ref{proposition_Sec4_nodal_solution} by truncating the reaction $f(x,\cdot)$ at $\{v_0(x),u_0(x)\}$. Recall that
    \begin{align}\label{Sec5_4}
	C_1(\ph_0,y_0) \neq 0
    \end{align}
    (see \eqref{Sec4_69}). The homotopy invariance of critical groups along with \eqref{Sec5_3} gives
    \begin{align}\label{Sec5_7}
	C_k(\ph_0,y_0)=C_k(\ph,y_0) \quad \text{for all }k \geq 0,
    \end{align}
    (see the proof of Proposition \ref{proposition_phi_u0_v0}) which implies, due to \eqref{Sec5_4},
    \begin{align*}
	C_1(\ph,y_0) \neq 0.
    \end{align*}
    Since $\ph \in C^2 \left( \W1p0 \right)$, from Papageorgiou-Smyrlis \cite[the proof of Proposition 12, Claim 2]{2013-Papageorgiou-Smyrlis}, we infer that
    \begin{align*}
	C_k(\ph,y_0)= \delta_{k,1}\Z \quad \text{for all }k \geq 0,
    \end{align*}
    which implies, because of \eqref{Sec5_7},
    \begin{align}\label{Sec5_10}
	C_k(\ph_0,y_0)= \delta_{k,1}\Z \quad \text{for all }k \geq 0.
    \end{align}
    Recall that $u_0 \in \interior$ and $v_0 \in -\interior$ are local minimizers of $\ph_0$ (see the claim in the proof of Proposition \ref{proposition_Sec4_nodal_solution}). Hence, we get
    \begin{align}\label{Sec5_11}
	C_k(\ph_0,u_0)=C_k (\ph_0,v_0)=\delta_{k,0} \Z \quad \text{for all }k \geq 0.
    \end{align}
    Since $\ph_0 \big|_{[v_0,u_0]}=\ph\big|_{[v_0,u_0]}$, $u_0 \in \interior, v_0 \in -\interior$, Proposition \ref{proposition_Sec5_phi_zero}, and the homotopy invariance of critical groups we see that
    \begin{align}\label{Sec5_12}
	C_k(\ph_0,0)=\delta_{k,d_m} \Z \quad \text{for all } k \geq 0.
    \end{align}
    Finally, by means of the truncation defined in \eqref{Sec4_41}, it is easy to see that $\ph_0$ is coercive. Therefore
    \begin{align}\label{Sec5_13}
	C_k\left(\ph_0,\infty \right)=\delta_{k,0}\Z \quad \text{for all }k \geq 0.
    \end{align}
    Now suppose that $K_{\ph_0}=\{0,u_0,v_0,y_0\}$. Taking into account the Morse relation given in \eqref{morse_relation} by setting $t=-1$ combined with \eqref{Sec5_10}--\eqref{Sec5_13} results in
    \begin{align*}
	(-1)^{d_m}+2(-1)^0+(-1)^1=(-1)^0,
    \end{align*}
    which gives the contradiction $(-1)^{d_m}=0$. Hence, we can find another $y_1 \in K_{\ph_0}$ satisfying $y_1 \not\in \{0,u_0,v_0,y_0\}$. Due to \eqref{claim_1} we know that $K_{\ph_0} \subseteq [u_0,v_0]$ and as we supposed that $u_0,v_0$ are the extremal solutions of \eqref{problem2}, it follows that $y_1$ is a nodal solution of \eqref{problem2} distinct from $y_0$. Finally, the
    usage of the nonlinear regularity theory implies that $y_1 \in C^1_0(\close)$. Moreover, similar to $y_0$ (see \eqref{Sec5_3}), we can show that
    \begin{align*}
	y_1 \in \ints_{C^1_0(\close)}[v_0,u_0].
    \end{align*}
    The proof is complete.
\end{proof}

\section{Nonlinear eigenvalue problem}\label{section_six_solutions}

In this section we deal with the following nonlinear eigenvalue problem
\begin{equation}\tag{P$_\lambda$}\label{problem3}
    \begin{aligned}
      -\divergenz (\nabla u(x))& = \lambda f(x,u(x)) \quad && \text{in } \Omega,\\
       u & = 0  &&\text{on } \partial \Omega.
    \end{aligned}
\end{equation}

As before, the reaction $f: \Omega \times \R \to \R$ is supposed to be a Carath\'{e}odory function which exhibits $(p-1)$-superlinear growth near $\pm\infty$ without satisfying the Ambrosetti-Rabinowitz condition. Our aim is to prove that problem \eqref{problem3} admits at least two nontrivial solutions provided $\lambda>0$ is sufficiently small. Moreover, one of these solutions vanishes as $\lambda \to 0^+$ and the other one blows up as $\lambda \to 0^+$, both in the Sobolev norm $\|\cdot\|_{\W1p0}$.

We suppose the following conditions on the reaction $f: \Omega \times \R \to \R$.
\begin{enumerate}[leftmargin=1.2cm]
    \item[H(f)$_4$:]
	$f: \Omega \times \R \to \R$ is a Carath\'{e}odory function satisfying $f(x,0)=0$, $f(x,s) \geq 0$ for a.a. $x \in \Omega$ and for all $s \geq 0$ such that
	\begin{enumerate}
	    \item[(i)]
		$|f(x,s)| \leq a(x) \left(1+|s|^{r-1}\right)$ for a.a. $x \in \Omega$, for all $s\geq 0$, with $a \in \Linf_+$, and $p < r<p^*$;
	    \item[(ii)]
		if $F(x,s)=\int_0^sf(x,t)dt$, then
		\begin{align*}
		    \lim_{s \to +\infty} \frac{F(x,s)}{s^{p}}=+\infty \quad  \text{uniformly for a.a. } x \in \Omega;
		\end{align*}
	    \item[(iii)]
		there exist $\tau \in \left((r-p)\max \left \{ \frac{N}{p},1\right \},p^*\right)$ and $\beta_0>0$ such that
		\begin{align*}
		    \liminf_{s \to +\infty} \frac{f(x,s)s-pF(x,s)}{s^{\tau}} \geq \beta_0 \quad \text{uniformly for a.a. } x \in \Omega;
		\end{align*}
	    \item[(iv)]
		there exist $\zeta \in (1,q)$ ($q$ as in hypothesis H(a)$_2$(v)) and $\delta>0$ such that
		\begin{align*}
		    \zeta F(x,s) \geq f(x,s)s > 0 \quad \text{for a.a. }x\in \Omega, \text{ for all }0<s \leq \delta,
		\end{align*}
		and
		\begin{align*}
		    \essinf_\Omega F(\cdot,\delta)>0;
		\end{align*}
	    \item[(v)]
		for every $\varrho>0$ there exists $\xi_\varrho>0$ such that
		\begin{align*}
		    s\mapsto f(x,s)+\xi_\varrho s^{p-1}
		\end{align*}
		is nondecreasing on $[0,\varrho]$ for a.a. $x\in\Omega$.
	\end{enumerate}
\end{enumerate}

\begin{remark}
    Since we are looking for positive solutions and as the hypotheses above concern the positive semiaxis $\R_+=[0,\infty)$, without loss of generality, we may assume that $f(x,s)=0$ for a.a. $x \in \Omega$ and for all $s\leq 0$.
\end{remark}

We have the following existence theorem for problem \eqref{problem3}.

\begin{theorem}
    Assume H(a)$_2$ and H(f)$_4$. Then there exists $\lambda^*>0$ such that problem \eqref{problem3} possesses at least two solutions $u_\lambda, v_\lambda \in \interior$ for all $\lambda \in (0,\lambda^*)$ satisfying
    \begin{align*}
	\l\|u_\lambda\r\|_{\W1p0} \to \infty \quad \text{ and } \quad \l\|v_\lambda\r\|_{\W1p0} \to 0 \quad \text{ as } \lambda \to 0^+.
    \end{align*}
\end{theorem}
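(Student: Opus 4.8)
The plan is to work with the $C^1$ energy functional $\varphi_\lambda(u)=\into G(\nabla u)\,dx-\lambda\into F(x,u)\,dx$ on $\W1p0$, producing $v_\lambda$ as a local minimizer trapped inside a fixed small ball and $u_\lambda$ via the mountain pass theorem, and then reading the two asymptotics off the critical values. Two preliminary facts are used throughout. Since $f(x,s)=0$ for $s\le 0$, every nontrivial critical point $u$ of $\varphi_\lambda$ (so $A(u)=\lambda N_f(u)$) is positive: testing with $-u^-$ and using Lemma~\ref{lemma_properties}(c) gives $u^-=0$; the nonlinear regularity theory then yields $u\in C^{1,\beta}_0(\close)$, and H(f)$_4$(v) together with H(a)$_2$ and the Pucci--Serrin maximum principle \cite{2007-Pucci-Serrin} give $u\in\interior$, exactly as in the proof of Proposition~\ref{proposition_Sec4_two_constant_sign}. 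Moreover, repeating the argument of Proposition~\ref{proposition_C_condition} with H(a)$_2$(iv) and H(f)$_4$(iii) in place of H(a)$_1$(iv) and H(f)$_1$(iii) shows that $\varphi_\lambda$ satisfies the $C$-condition for every $\lambda>0$.

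Fix $\rho\in(0,1)$. On $\partial B_\rho=\{u:\|u\|_{\W1p0}=\rho\}$, Corollary~\ref{corollary_upper_lower_estimates}, H(f)$_4$(i) and the Sobolev embedding give $\varphi_\lambda(u)\ge\frac{c_1}{p(p-1)}\rho^p-\lambda C(\rho)$, so there is $\lambda^*>0$ such that $\eta_\rho(\lambda):=\inf_{\partial B_\rho}\varphi_\lambda>0$ for all $\lambda\in(0,\lambda^*)$, and in fact $\eta_\rho(\lambda)\ge m_0>0$ for $\lambda$ small. On the other hand, since $\hat u_1(q)\in\interior$, choose $t\in(0,1)$ so small that $t\hat u_1(q)\in\overline B_\rho$ and $t\hat u_1(q)(x)\le\delta$ on $\close$; then \eqref{new_estimate}, the estimate $F(x,s)\ge M|s|^\zeta$ for $|s|\le\delta$ coming from H(f)$_4$(iv), and $\zeta<q<p$ force $\varphi_\lambda(t\hat u_1(q))<0$, hence $\inf_{\overline B_\rho}\varphi_\lambda<0$. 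As $\varphi_\lambda$ is sequentially weakly lower semicontinuous and $\overline B_\rho$ is weakly compact, this infimum is attained at some $v_\lambda$, and the two displayed inequalities force $v_\lambda\in B_\rho$; thus $v_\lambda$ is a nontrivial local minimizer of $\varphi_\lambda$, hence a solution of \eqref{problem3} with $v_\lambda\in\interior$ by the first paragraph. Testing $A(v_\lambda)=\lambda N_f(v_\lambda)$ with $v_\lambda$, bounding the right-hand side by $\lambda\|a\|_{\Linf}\bigl(C_1\|v_\lambda\|_{\W1p0}+C_2\|v_\lambda\|_{\W1p0}^r\bigr)$ via H(f)$_4$(i) and using $\|v_\lambda\|_{\W1p0}<\rho<1$ with $r>p$, one obtains $\|v_\lambda\|_{\W1p0}^{p-1}\le C'\lambda$ for $\lambda$ small, so $\|v_\lambda\|_{\W1p0}\to 0$ as $\lambda\to 0^+$.

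For the second solution, apply Theorem~\ref{theorem_mountain_pass} with $u_1=0$ and $u_2=e_\lambda:=t_\lambda\hat u_1(p)$. By H(f)$_4$(i),(ii) one has $\varphi_\lambda(t\hat u_1(p))\to-\infty$ as $t\to+\infty$ (argue as in the proof of Proposition~\ref{proposition_phi_infinity}), so $t_\lambda$ can be taken with $\|e_\lambda\|_{\W1p0}>\rho$ and $\varphi_\lambda(e_\lambda)<0$; since $\max\{\varphi_\lambda(0),\varphi_\lambda(e_\lambda)\}<\eta_\rho(\lambda)$ and $\varphi_\lambda$ satisfies the $C$-condition, Theorem~\ref{theorem_mountain_pass} produces a critical point $u_\lambda$ with $\varphi_\lambda(u_\lambda)=c_\lambda\ge\eta_\rho(\lambda)>0$. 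In particular $u_\lambda\ne 0$ (because $\varphi_\lambda(0)=0$), $u_\lambda\ne v_\lambda$ (because $\varphi_\lambda(v_\lambda)<0$), and $u_\lambda\in\interior$. Finally, if $\|u_\lambda\|_{\W1p0}$ did not tend to $+\infty$ as $\lambda\to 0^+$, then along some $\lambda_n\to 0^+$ the family $(u_{\lambda_n})$ would be bounded in $\W1p0$, so $u_{\lambda_n}\weak u_0$; testing $A(u_{\lambda_n})=\lambda_n N_f(u_{\lambda_n})$ with $u_{\lambda_n}-u_0$ and using $\lambda_n\to 0$ gives $\limsup_n\lan A(u_{\lambda_n}),u_{\lambda_n}-u_0\ran\le 0$, whence $u_{\lambda_n}\to u_0$ in $\W1p0$ by the $(S)_+$-property of $A$ (Proposition~\ref{proposition_basic_properties}); passing to the limit yields $A(u_0)=0$, i.e.\ $u_0=0$. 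But then $\varphi_{\lambda_n}(u_{\lambda_n})\to 0$, contradicting $\varphi_{\lambda_n}(u_{\lambda_n})=c_{\lambda_n}\ge\eta_\rho(\lambda_n)\ge m_0>0$. Hence $\|u_\lambda\|_{\W1p0}\to\infty$ as $\lambda\to 0^+$.

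The main obstacle is precisely this last asymptotic: one must keep the mountain pass level $c_\lambda$ bounded away from $0$ uniformly for small $\lambda$ — which is why the ring radius $\rho$ is fixed independently of $\lambda$ — while simultaneously showing, through the $(S)_+$-compactness of $A$, that any bounded family of these solutions must collapse to $0$ as $\lambda\to 0^+$, so that the energy comparison closes the argument. Everything else (the $C$-condition, the superlinear decay $\varphi_\lambda(t\hat u_1(p))\to-\infty$, the construction of the small local minimizer, and the $\interior$-regularity) follows the lines already developed in Sections~\ref{section_three} and~\ref{section_four}.
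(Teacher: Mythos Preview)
Your argument is correct, but the route differs from the paper's in two places. For $v_\lambda$, the paper takes a $\lambda$-dependent radius $\lambda^\beta$ with $\beta\in(0,1/p)$, uses Ekeland's variational principle on $\overline{B}_{\lambda^\beta}$ to locate an almost-critical sequence, and then the $C$-condition to pass to a limit; the decay $\|v_\lambda\|_{\W1p0}<\lambda^\beta\to 0$ is then immediate. You instead minimize directly on a fixed ball $\overline{B}_\rho$ via weak lower semicontinuity and weak compactness, and extract the decay a posteriori from the Euler equation, obtaining the sharper estimate $\|v_\lambda\|_{\W1p0}^{p-1}\le C'\lambda$. For $u_\lambda$, the paper again uses a $\lambda$-dependent radius $\lambda^{-\alpha}$ with $\alpha\in(0,1/(r-p))$, shows $\inf_{\partial B_{\lambda^{-\alpha}}}\ph_\lambda\ge\xi(\lambda)\to+\infty$, and then reads off $\|u_\lambda\|_{\W1p0}\to\infty$ from the sandwich $\xi(\lambda)\le\ph_\lambda(u_\lambda)\le M(1+\|u_\lambda\|_{\W1p0}^r)$. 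Your contradiction argument through the $(S)_+$-property of $A$ is softer but equally valid. The paper's choices give explicit blow-up and decay rates essentially for free, whereas your fixed-radius setup avoids Ekeland and the calibration of the exponents $\alpha,\beta$, at the cost of a compactness detour for the blow-up; both are legitimate proofs of the stated theorem.
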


\begin{proof}
    Let $\varphi_\lambda : \W1p0 \to \R$ be the $C^1$-energy functional of problem \eqref{problem3} defined by
    \begin{align*}
	\varphi_\lambda(u)=\into G(\nabla u(x)) dx - \lambda \into F(x,u(x)) dx
    \end{align*}
    with $F(x,s)=\int^s_0 f(x,t)dt$. By means of H(f)$_4$(i) and (iv) we obtain the estimate
    \begin{align}\label{sec6_1}
	|F(x,s)| \leq \frac{M_{28}}{q}\l(s^+\r)^{q}+\frac{M_{29}}{r} \l(s^+\r)^r \quad \text{for a.a. }x \in \Omega \text{ and for all } s\in \R
    \end{align}
    with positive constants $M_{28}$ and $M_{29}$. Taking into account Corollary \ref{corollary_upper_lower_estimates}, \eqref{sec6_1}, and the Sobolev embedding theorem gives
    \begin{align}\label{sec6_2}
	\begin{split}
	    \varphi_\lambda(u)=
	    & \into G(\nabla u) dx - \lambda \into F(x,u) dx\\
	    & \geq \frac{c_1}{p(p-1)} \|u\|_{\W1p0}^p- \lambda \left[\frac{M_{28}}{q}\|u\|^{q}_{\Lp{q}}+\frac{M_{29}}{r}\|u\|^r_{\Lp{r}} \right]\\
	    & \geq \frac{c_1}{p(p-1)} \|u\|_{\W1p0}^p- \lambda \left[M_{30}\|u\|^{q}_{\W1p0}+M_{31} \|u\|^r_{\W1p0} \right]
	\end{split}
    \end{align}
    for all $u \in \W1p0$ and with positive constants $M_{30}, M_{31}$ both independent of $\lambda>0$. Now let $\alpha \in \left(0, \frac{1}{r-p} \right)$ and suppose that $\|u\|_{\W1p0}=\lambda^{-\alpha}$. Then, \eqref{sec6_2} reads as
    \begin{align}\label{sec6_3}
	\begin{split}
	    \varphi_\lambda(u)
	    & \geq \frac{c_1}{p(p-1)} \lambda^{-\alpha p}- M_{30} \lambda^{1-\alpha q}-M_{31} \lambda^{1-\alpha r}=: \xi(\lambda).
	\end{split}
    \end{align}
    Since $\alpha<\frac{1}{r-p}$ there holds $-\alpha p<1-\alpha r$ and recall $q<p<r$. Therefore,
    \begin{align}\label{sec6_13}
	\xi(\lambda) \to + \infty \quad \text{as } \lambda \to 0^+.
    \end{align}
    Hence, there exists a number $\lambda_1^*>0$ such that $\xi(\lambda)>0$ for all $\lambda \in \l(0,\lambda_1^*\r)$. Then, from \eqref{sec6_3} one has
    \begin{align}\label{sec6_10}
	\ph_\lambda(u) \geq \xi(\lambda)>0=\ph_\lambda(0)
    \end{align}
    for all $u \in \W1p0$ with $\|u\|_{\W1p0}=\lambda^{-\alpha}$ and $\lambda \in (0,\lambda_1^*)$.
    
    As before, thanks to hypotheses H(f)$_4$(i),(ii), we derive
    \begin{align}\label{sec6_11}
	\varphi_\lambda(t \hat{u}_1(p)) \to - \infty \quad \text{as } t \to +\infty \quad \text{for all }\lambda>0.
    \end{align}
    Finally, Proposition \ref{proposition_C_condition} ensures that $\ph_\lambda$ satisfies the $C$-condition. This fact along with \eqref{sec6_10} and \eqref{sec6_11} allow us to apply the mountain pass theorem stated in Theorem \ref{theorem_mountain_pass}. This yields an element $u_\lambda \in \W1p0$ such that
    \begin{align}\label{sec6_6}
	u_\lambda \in K_{\ph_\lambda}\setminus \{0\} \quad \text{and} \quad \xi(\lambda) \leq \ph_\lambda(u_\lambda).
    \end{align}
    Hence, $u_\lambda$ is a nontrivial solution of \eqref{problem3}. As before, the nonlinear regularity theory (see \cite{1968-Ladyzhenskaya-Ural'tseva}, \cite{1991-Lieberman})  and the nonlinear maximum principle (see \cite{2007-Pucci-Serrin} and hypothesis H(f)$_4$(v)) imply that $u_\lambda \in \interior$. Now, by applying \eqref{sec6_6}, Corollary \ref{corollary_upper_lower_estimates} and hypothesis H(f)$_4$(i), it follows
    \begin{align}\label{sec6_12}
	\xi(\lambda) \leq \ph_\lambda (u_\lambda)\leq M_{32} \l (1+\|u_\lambda\|_{\W1p0}^r\r)
    \end{align}
    for some $M_{32}>0$. The statement in \eqref{sec6_12} along with \eqref{sec6_13} yields that
    \begin{align*}
	\|u_\lambda\|_{\W1p0} \to \infty \quad \text{ as } \lambda \to 0^+.
    \end{align*}
    
    Now let us prove the second assertion of the theorem. To this end, recall that we have again
    \begin{align}\label{sec6_14}
	\begin{split}
	    \varphi_\lambda(u) \geq \frac{c_1}{p(p-1)} \|u\|_{\W1p0}^p- \lambda \left[M_{30}\|u\|^{q}_{\W1p0}+M_{31} \|u\|^r_{\W1p0} \right]
	\end{split}
    \end{align}
    for all $u \in \W1p0$ (see \eqref{sec6_2}). Let $\beta \in \left(0,\frac{1}{p} \right)$ and set $\|u\|_{\W1p0}=\lambda^{\beta}$. Then, \eqref{sec6_14} becomes
    \begin{align*}
	\begin{split}
	    \varphi_\lambda(u) \geq \frac{c_1}{p(p-1)} \lambda^{\beta p}-M_{30}\lambda^{1+\beta q}-M_{31} \lambda^{1+\beta r}=:\omega(\lambda).
	\end{split}
    \end{align*}
    Since
    \begin{align*}
	\omega(\lambda)=\lambda \left[ \frac{c_1}{p(p-1)} \lambda^{\beta p-1}- M_{24}\lambda^{\beta q}-M_{25} \lambda^{\beta r}\right]
    \end{align*}
    and $\beta p-1<0$, we see that
    \begin{align*}
	\omega(\lambda) \to +\infty \quad \text{as } \lambda \to 0^+.
    \end{align*}
    Therefore we find a number $\lambda_2^*>0$ such that
    \begin{align}\label{sec6_16}
	\ph_\lambda(u) \geq \omega(\lambda)>0=\ph_\lambda(0)
    \end{align}
    for all $u \in \W1p0$ with $\|u\|_{\W1p0}=\lambda^{\beta}$ and $\lambda \in \l(0,\lambda_2^*\r)$.
    
    Let $\overline{B}_\lambda=\l \{u \in \W1p0: \|u\|_{\W1p0} \leq \lambda^\beta \r\}$. By means of hypotheses H(a)$_2$(v) and H(f)$_4$(iv) we obtain, for $t \in (0,1)$ sufficiently small, that
    \begin{align*}
	\ph_\lambda \l(t \hat{u}_1(q) \r)<0
    \end{align*}
    (cf. the proof of Proposition \ref{proposition_Sec4_two_constant_sign}). Therefore
    \begin{align*}
	\inf_{\partial \overline{B}_\lambda}\ph_\lambda \geq \omega(\lambda)>0 \quad \text{and} \quad \inf_{\overline{B}_\lambda}\ph_\lambda<0.
    \end{align*}
    Set $d_\lambda:=\inf_{\partial \overline{B}_\lambda}\ph_\lambda-\inf_{\overline{B}_\lambda}\ph_\lambda$ and let $\eps \in \l(0,d_\lambda\r)$. Taking into account the Ekeland variational principle (see, for example, Gasi{\'n}ski-Papageorgiou \cite[p. 579]{2006-Gasinski-Papageorgiou}) there exists $u_\eps \in \overline{B}_\lambda$ such that
    \begin{align}\label{sec6_17}
	\ph_\lambda\l(u_\eps\r)\leq \inf_{\overline{B}_\lambda} \ph_\lambda+\eps
    \end{align}
    and
    \begin{align}\label{sec6_18}
	\ph_\lambda \l(u_\eps \r)\leq\ph_\lambda (y)+\eps \l\|y-u_\eps\r\|_{\W1p0} \quad \text{for all }y \in \overline{B}_\lambda.
    \end{align}
    Since $\eps <d_\lambda$, we infer from \eqref{sec6_17} that
    \begin{align*}
	\ph_\lambda\l(u_\eps\r)< \inf_{\partial \overline{B}_\lambda} \ph_\lambda,
    \end{align*}
    thus $u_\eps \in B_\lambda = \l\{u \in \W1p0: \|u\|_{\W1p0}<\lambda^\beta \r\}$. This ensures that $u_\eps+th \in \overline{B}_\lambda$ for every $h \in \W1p0$ and for all $t>0$ sufficiently small. Taking $y=u_\eps+th$ in \eqref{sec6_18} for $h \in \W1p0$ with such a small $t>0$, then dividing by $t>0$ and letting $t\to 0^+$, we obtain
    \begin{align*}
	-\eps \|h\|_{\W1p0} \leq \l\lan \ph'_\lambda\l(u_\eps\r),h\r\ran.
    \end{align*}
    Since $h \in \W1p0$ is arbitrary the last inequality gives $\l\|\ph'_\lambda\l(u_\eps\r)\r\|_*\leq \eps$.
    
    Now, let $\eps_n \to 0^+$ and let $u_n=u_{\eps_n}$. Hence,
    \begin{align*}
	\l(1+\l\|u_n\r\|_{\W1p0}\r) \ph'_\lambda \l(u_n\r) \to 0
    \end{align*}
    which in view of Proposition \ref{proposition_C_condition} implies that $u_n \to v_\lambda$ in $\W1p0$ for some $v_\lambda \in \W1p0$.
    
    Passing to the limit in \eqref{sec6_17} as $n \to \infty$ we have
    \begin{align*}
	\ph_\lambda \l(v_\lambda\r)=\inf_{\overline{B}_\lambda} \ph_\lambda<0=\ph_\lambda(0)
    \end{align*}
    which means that $v_\lambda \neq 0$ being a local minimizer of $\ph_\lambda$. Therefore, $v_\lambda$ is a solution of \eqref{problem3} and $v_\lambda \in \interior$ (as before).
    Moreover, since $u_\lambda$ is a critical point of $\ph_\lambda$ of mountain pass type, it follows that $v_\lambda \neq u_\lambda$. Finally, note that
    \begin{align*}
	\l\|v_\lambda\r\|_{\W1p0} < \lambda^ \beta.
    \end{align*}
    Thus, $\l\|v_\lambda\r\|_{\W1p0} \to 0$ as $\lambda\to 0^+$. Letting $\lambda^*=\min \l\{\lambda^*_1,\lambda^*_2\r\}$ we have the conclusion of our theorem.
\end{proof}

\end{document}